\definecolor{darkblue}{rgb}{0.0,0.0,0.4}
\newtheorem{theorem}{Theorem}[section]
\newtheorem{lemma}[theorem]{Lemma}
\newtheorem{corollary}[theorem]{Corollary}
\theoremstyle{definition}
\theoremstyle{remark}
\newtheorem{remark}[theorem]{Remark}
\newcommand{\ms}[1]{{\mathbf{#1}}}
\newcommand{\ds}[1]{{\mathds{#1}}}
\newcommand{\bs}[1]{{\boldsymbol{#1}}}
\renewcommand{\rm}[1]{{\mathrm{#1}}}
\newcommand{\bb}[1]{{\mathbb{#1}}}
\renewcommand{\cal}[1]{{\mathcal{#1}}}
\newcommand{\scr}[1]{{\mathscr{#1}}}
\newcommand{\n}{\ms{n}}
\newcommand{\diff}{\partial}
\newcommand{\grad}{\nabla}
\newcommand{\Lap}{\Delta}
\renewcommand{\dfrac}[2]{\frac{\diff #1}{\diff #2}}
\newcommand{\semi}[2]{{|#1|}_{#2}}
\newcommand{\norm}[2]{\|#1\|_{#2}}
\newcommand{\inner}[1]{\left\langle#1\right\rangle}
\newcommand{\enorm}[2]{|\!|\!|#1|\!|\!|_{#2}}
\newcommand{\jump}[2]{\bb{J}_{#2}\!\left(#1\right)}
\newcommand{\supp}{\mathrm{supp}\,}
\newcommand{\diam}{\rm{diam}\,}
\newcommand{\lapprox}{\preceq}
\newcommand{\wh}[1]{\widehat{#1}}
\newcommand{\eps}{\varepsilon}
\renewcommand{\u}{u}
\renewcommand{\v}{v}
\newcommand{\w}{w}
\newcommand{\e}{e}
\newcommand{\U}{U}
\newcommand{\V}{V}
\newcommand{\W}{W}
\newcommand{\X}{\bb{X}}
\renewcommand{\a}{a}
\newcommand{\marked}{\scr{M}}
\newcommand{\face}{\tau}
\newcommand{\cell}{\tau}
\newcommand{\edge}{\sigma}
\newcommand{\diffe}{\diff_\edge}
\newcommand{\eff}{f}
\newcommand{\est}{\eta}
\newcommand{\estP}{\eta_\mesh}
\newcommand{\up}{\U}
\newcommand{\upp}{\U_\ast}
\newcommand{\Ip}{I_\mesh}
\newcommand{\Ho}{H_0^2}
\newcommand{\mesh}{P}
\newcommand{\bedges}{\cal{G}}
\newcommand{\edges}{\cal{E}}
\newcommand{\Xp}{\bb{X}_\mesh}
\newcommand{\Xpp}{\bb{X}_{\mesh_\ast}}
\newcommand{\ap}{a_\mesh}
\newcommand{\Op}{\cal{L}}
\newcommand{\ellf}{\ell_f}
\newcommand{\osc}{\rm{osc}}
\newcommand{\Ccoer}{C_\rm{coer}}
\newcommand{\Ccont}{C_\rm{cont}}
\newcommand{\cshape}{c_\rm{shape}}
\newcommand{\Crel}{C_\rm{rel}}
\newcommand{\Cdrel}{C_\rm{dRel}}
\newcommand{\Ceff}{C_\rm{eff}}
\newcommand{\Cest}{C_\rm{est}}
\newcommand{\qest}{q_\rm{est}}
\newcommand{\Clip}{C_\rm{lip}}
\newcommand{\ctrace}{d_0}
\newcommand{\cinv}{d_1}
\newcommand{\cdtrace}{d_2}
\newcommand{\cc}{d_4}
\newcommand{\cproj}{c_1}
\title{A convergent boundary-condition conforming adaptive spline-based finite element method for the bi-Laplace operator}
\author{Ibrahim Al Balushi}
\institution{McGill University}
\date{\today}
\begin{document}
\maketitle
\begin{abstract}
We establish the convergence of an adaptive spline-based finite element method of a fourth order elliptic problem.
\end{abstract}
\section{Introduction}
Design of optimal meshes for finite element analysis is a topic of extensive research going back to the early seventies.
Among the rich variety of strategies explored, the first mathematical framework for automatic optimal mesh generation was laid in the seminal work of Babushka and Rheinboldt \cite{babuvvska1978error}.
They introduce a class of computable a posteriori error estimates for a general class of variational problems which provides a strategy of extracting localized approximations of the numerical error of the exact solution.
The derived computable estimates are shown to form an upper bound with the numerical error, justifying the validity of the estimator, and a lower bounds which ensures efficient refinement; i.e, refinement where only necessar.
This established equivablence with the numerical error eluded to promising potential of practicality and robustness.
The theoretical results led to a heuristic characterization of optimal meshes through the even distribution a posteriori error quantities over all mesh elements, providing a blue-print for adaptive mesh generation.
The first detailed discription and performance analysis for a simple and accessible a posteriori error estimator was conceived for a one-dimensional elliptic and parabolic second-order Poisson-type problems in \cite{babuvska1978posteriori}.
The analysis was significantly improved in \cite{verfurth1994posteriori} for two-dimensional scenarios and develops numerous techniques used in the derivation of a posteriori estimates untill today.\\
\\
The first convergence was given in \cite{bubuvska1984feedback} in one-dimensions and later extended to two-dimenions by Dorfler \cite{dorfler1996convergent}.
Combing the advent of a novel \emph{marking} strategy and \emph{finess} assumptions of the initial mesh, \cite{dorfler1996convergent}.
Element marking directs the refinement procedure to select user-specified ratio of elements with highest error indicators relative to the total estimation.
The proposed strategy is later shown to be optimal \cite{cascon2008quasi}.
The initial mesh assumption was placed to ensure problem datum, such as source function and boundary values, are sufficiently resolved for detection by the solver.
The aim is to ensure error reduction of the estimator and thus monotone convergence of numerical error is achieved through contraction of consecutive errors in energy norm at every step.
but at the expense of potential over-refinement of the initial mesh. This was achieved using a local counter part of the efficiency estimate described above.\\
\\
Morin et al \cite{morin2000data} came to the realization that the averaging of the data has an unavoidable interference with the estimator error reduction irrespective of quadrature and was due to the avergaing of finer feathures of data brought by finite-dimensional approximations.
This averaging was quantified into an \emph{oscillation term}, a quantity tightly related to the criterion used in the initial mesh assumption used in \cite{dorfler1996convergent} but it provided a sharper representation of the underlying issue.
As a result the initial mesh assumption was removed in \cite{morin2000data} and replaced with a Dolfer-type marking criterion, \emph{separate marking}, for the data oscillation.
Unfortunately, the relaxation of a fine initial mesh had the unintended consequence of losing the strict monotone behaviour of numerical error decay.
It led to the introduction of the \emph{interior node property} to ensure error reduction with every step so as to ensure two consequence solutions will not be the same unless they are equal to the exact one; but at the expense of introducing over refinement. Each marked elementin in a two-dimensional triangular mesh undergoes three bisections ensuring an interior node, which furnishes us with a local lower bound and thus recovers strict error reduction with every iteration.
The results were extended to saddle-problems in \cite{morin2002convergence} and genarlized into abstract Hilbert setting in \cite{morin2008basic}.\\
\\
For the better part of the 2000's Morin-Nochetto-Sierbert algorithm (MNS) champoined adaptive finite element methods AFEM of linear elliptic problems after which the analysis was refined by Cascon \cite{cascon2008quasi} in concrete setting where they did not rely on a local lower bound for convergence which led to the ultimate removal of the costly interior node property and separate marking for oscillation. This was done while achieving quasi-optimal mesh complexity; see \cite{binev2004adaptive},\cite{stevenson2005optimal} for dertails. It was realized that strict reduction of error in energy norm cannot be gaurenteed whenever consequetive numerical solutions coinside but strict monotone decay is be obtain with respect to a suitable \emph{quasi-norm}. The result was extended to abstract Hilbert by Siebert \cite{siebert2010converg} which is now widely considered state of the art analysis of AFEM among the adaptive community and It hinges on the following ingredients: a global upper bound justifying the validity of the a posteriori  estimator, a Lipschitz property of the estimator as a function on the discrete finite element trail space indicating suitable sensitivity in variation within the trial space, a Pythgurous-type relation furnished by the variational and discrete forms and any suitable making strategy akin to that of Dofler; one that aims to equally distributes the elemental error estimates.\\
\\

\section{Problem set up and Adaptive method}
Let $\Omega$ be a bounded domain in $\bb{R}^2$ with polygonal boundary $\Gamma$.
For a source function $f\in L^2(\Omega)$ we consider the following homogenous Dirichlet boundary-valued problem
\begin{eqnarray}\label{eq:pde}
\Op\u(x):=\Lap^2\u(x)=f(x)&&\text{in}\ \Omega\\
u=\diff u/\diff\nu=0&& \text{on}\ \Gamma.\nonumber
\end{eqnarray}
The adaptive procedure iterates over the following modules
\begin{equation}\label{eq:afem}
\boxed{\ms{SOLVE}}\longrightarrow\boxed{\ms{ESTIMATE}}\longrightarrow\boxed{\ms{MARK}}\longrightarrow\boxed{\ms{REFINE}}
\end{equation}
The module $\textbf{SOLVE}$ computes a hierarchical polynomial B-spline approximation $\U$ of the solution $\u$ with respect to a hierarchical partition $\mesh$ of $\Omega$.
A detailed discussion on the nature of such partitions will be carried in Section~\ref{sec:spline}
For the module \textbf{ESTIMATE}, we use a residual-based error estimator $\eta_\mesh$ derived from the a posteriori analysis in Section~\ref{sec:post}.
The module \textbf{MARK} follows the D\"olfer marking criterion of \cite{dorfler1996convergent}.
Finally, the module \textbf{REFINE} produces a new refined partition $\mesh_\ast$ satisfying certain geometric constraints described in Section~\ref{sec:spline} to ensure sharp local approximation.\\
\\
\subsection{Notation}
We begin by laying out the notational conventions and function space definitions used in this presentation.
Let $\mesh$ be a partition of domain $\Omega$ consisting of square cells $\cell$ following the structure described in \cite{vuong2011hierarchical},\cite{al2018adaptivity}.
Denote the collection of all interior edges of cells $\cell\in\mesh$ by $\edges_\mesh$ and all those along the boundary $\Gamma$ are to be collected in $\bedges_\mesh$.
We assume that cells $\tau$ are open sets in $\Omega$ and that edges $\sigma$ do not contain the vertices of its affiliating cell.
Let $\diam(\omega)$ be the longest length within a Euclidian object $\omega$ and set $h_\face:=\diam(\face)$ and $h_\edge:=\diam(\edge)$. Then let the mesh-size $h_\mesh:=\max_{\face\in\mesh}h_\face$.
Define the boundary mesh-size function $h_\Gamma\in L^\infty(\Gamma)$ by
\begin{equation}
h_\Gamma(x)=\sum_{\sigma\in\bedges_\mesh}h_\sigma\ds{1}_\sigma(x),
\end{equation}
where the $\ds{1}_\sigma$ are the indicator functions on boundary edges.
Let $H^s(\Omega)$, $s>0$, be the fractional order Sobolev space equipped with the usual norm $\norm{\cdot}{H^s(\Omega)}$; see references \cite{adams1975sobolev},\cite{grisvard2011elliptic}.
Let $H^s_0(\Omega)$ be given as the closure of the test functions $C_c^\infty(\Omega)$ in $\norm{\cdot}{H^s(\Omega)}$.
The semi-norm $\semi{\cdot}{H^s(\Omega)}$ defines a full norm on $H^s_0(\Omega)$ by virtue of Poincar\'e's inequality. Moreover, the semi-norm $\norm{\Lap\cdot}{L^2(\Omega)}$ defines a norm on $H^2_0(\Omega)$.
%
%
By $H^{-2}(\Omega)=(H_0^{2}(\Omega))'$ the dual of $H^{2}(\Omega)$ with the induced norm
\begin{equation}
\norm{F}{H^{-2}(\Omega)}=\sup_{\v\in H_0^2(\Omega)}\frac{\inner{F,\v}}{\norm{\v}{H^2(\Omega)}}.
\end{equation}
\subsection{Weak formulation}
The natural weak formulation to the PDE \eqref{eq:pde} reads
\begin{equation}\label{eq:cwp}
\text{Find}\ \u\in\Ho(\Omega)\ \text{such that}\ \a(\u,\v)=\ellf(\v)\ \text{for all}\ \v\in\Ho(\Omega),
\end{equation}
where $\a:\Ho(\Omega)\times\Ho(\Omega)\to\bb{R}$ is be the bilinear form $\a(\u,\v)=(\Lap\u,\Lap\v)_{L^2(\Omega)}$ and $\ellf(\v)=(f,\v)_{L^2(\Omega)}$.
The energy norm $\enorm{\cdot}{}:=\sqrt{\a(\cdot,\cdot)}\equiv\norm{\Lap\cdot}{L^2(\Omega)}$ is one for which the form $\a$ is continuous and coercive on $H^2_0(\Omega)$, with unit proportionality constants. The existence of a unique solution is therefore ensured by Babuska-Lax-Milgram theorem.
The variational formulation \eqref{eq:cwp} is consistent with the PDE \eqref{eq:pde} under sufficient regularity considerations; if $\u\in H^4(\Omega)\cap H^2_0(\Omega)$ satisfies \eqref{eq:cwp} then $\u$ satisfies \eqref{eq:pde} in the classical sense by virtue of the Du Bois-Reymond lemma.

\subsection{Spline spaces and hierarchical partitions}\label{sec:spline}
We consider a hierarchical polynomial spline space as the discrete trial and test space.
For completeness we describe the construction.
Let $\cal{S}^0\subset \cal{S}^{1}\subset\cdots\subset \cal{S}^{L-1}$ be a hierarchy of $L$ tensor-product multivariate spline spaces defined on $\Omega$.
For each hierarchy level $\ell$, we obtain $B$-spline polynomial basis $\cal{B}^\ell$ of degree $r\ge2$ defined on a tensor-product mesh $G^\ell$ partitioning of $\Omega$ generated by tensorizing translations and dilations of an $r$-th degree cardinal B-spline $b^r$ defined by recursive convolution with the characteristic function:
\begin{equation}
b^k(x)=\int_0^{k+1}b^{k-1}(x-t)\ds{1}_{[0,1)}(t)\,dt,\quad b^0:=\ds{1}_{[0,1)}(x).
\end{equation}
Partition $G^{\ell+1}$ is obtained from $G^{\ell}$ via uniform dyadic subdivisions which will insure the nesting $\cal{S}^{\ell}\subset\cal{S}^{\ell+1}$. That is, if $s\in\cal{S}^\ell$ then we may express $s$ in terms of $\cal{B}^{\ell+1}$:
\begin{equation}\label{eq:nestedrelation}
s=\sum_{\beta\in\cal{B}^{\ell+1}}c_\beta^{\ell+1}(s)\beta,
\end{equation}
where $c_\beta^{\ell+1}(s)$ are the coefficients of $s$ when expressed in $\bb
{B}^{\ell+1}$.
From classical spline theory, it is well-known that B-splines are locally linearly independent, they are non-negative, they are supported locally and form a partition of unity.
We are now in a position to define hierarchical mesh configuration.
A cell $\cell$ of level $\ell$ is said to \emph{active} if $\cell\in G^\ell$ and $\cell\cap\Omega^{\ell+1}=\emptyset$. 
A subdomain $\Omega^\ell$ of $\Omega$ is defined as the closure of the union of active Cells $\cell$ in $G^\ell$. 
With subdomain hierarchy $\bs{\Omega}^L=\{\Omega^\ell\}_{\ell=0}^{L-1}$ of closed domains ${\Omega}^0\supseteq{\Omega}^1\supseteq\cdots\supseteq{\Omega}^{L-1}$, with $\rm{int}({\Omega}^0)={\Omega}$ and ${\Omega}^{L}=\emptyset$, we define a hierarchical $\mesh$ partitioning of $\Omega$ as a mesh satisfying the following conditions:
\begin{enumerate}
\item 
Members of $\mesh$ are active cells from $G^\ell$, $0\leq\ell\leq L-1$.
\item 
All cells $\tau$ in $\mesh$ are disjoint.
\item 
The interior of the closure of the union $\cup\{\cell:\cell\in\mesh\}$ is equal to $\Omega$.
\end{enumerate}
A \emph{Hierarchical B-spline} (HB-spline) basis $\cal{H}$ with respect to hierachical partition $\mesh$ is defined as
\[
\cal{H}_\mesh=\left\{\beta\in\cal{B}^{\ell}:\supp\beta\subseteq\wh{\Omega}^\ell\ \wedge\ \supp\beta\not\subseteq\wh{\Omega}^{\ell+1}\right\}.
\]
A recursive definition is given in \cite{speleers2016effortless}.
A basis function $\beta$ of level $\ell$ is said to \emph{active} if $\beta\in\cal{B}^\ell\cap\cal{H}$, otherwise it is \emph{passive}.
The basis $\cal{H}_\mesh$ inherits much of the key properties of tensor-product B-spline bases: they are locally linearly independent, they are non-negative and they have local support \cite{vuong2011hierarchical}. However, the basis does not form a partition of unity which could pose a problem to approximation stability.
It is possible to modify $\cal{H}_\mesh$ into forming a partition of unity through scaling \cite{vuong2011hierarchical} but instead we use a truncation procedure utilizing the relation \eqref{eq:nestedrelation} to produce a new basis that recovers the partition of unity while preseving all the desirable properties of $\cal{H}_\mesh$.
We define a truncation operatoe of a spline function $s\in\cal{S}^\ell$:
\begin{equation}\label{eq:truncation}
\rm{trunc}^{\ell+1}s:=\sum_{\beta\in\cal{B}^{\ell+1}:\supp\beta\not\subseteq{\Omega}^{\ell+1}}c_\beta^{\ell+1}(s)\beta.
\end{equation}
In simple terms, the truncation removes contributions coming from \emph{active} basis functions in $\cal{B}^{\ell+1}$ thus reducing the support $s$ from reaching too far into $\Omega^{\ell+1}$.
By recursive application of \eqref{eq:truncation} to each spline $\beta\in\cal{H}_\mesh$:
\begin{equation}
\rm{Trunc}^{\ell+1}\beta:=\rm{trunc}^{L-1}(\rm{trunc}^{L-2}(\cdots(\rm{trunc}^{\ell+1}\beta)))
\end{equation}
we obtain a modified hierarchical B-spline basis, a truncated hierarchical B-spline (THB-spline) basis $\cal{T}_\mesh$ with respect to partition $\mesh$:
\begin{equation}
\cal{T}_\mesh=\left\{\rm{Trunc}^{\ell+1}\beta:\beta\in\cal{B}^\ell\cap\cal{H}_\mesh,\ \ell=0:L-1\right\}
\end{equation}
The basis $\cal{T}_\mesh$ retains all of the aforementioned properties of its hierarchical counterpart $\cal{H}_\mesh$ while forming a partition of unity. 

\subsection{Admissible partitions}
For local and stable approximation we need to control the infleuence of each basis function. With additional restrictions on the structure of partitions $\mesh$ we can guarantee that the number of basis functions acting on any point is bounded and that the diameter of the support of a basis function is comparable to any cell in its support.
A partition $\mesh$ is said to be \emph{admissible} if the truncated basis functions in $\cal{T}$ which has support on $\tau\in\mesh$ belong to at most two levels successive levels.
The support extension of a cell $\cell\in\cal{G}^\ell$ with respect to level $k\leq\ell$ is defined as
\begin{equation}
S(\cell,k):=\left\{\cell'\in G^k:\exists\beta\in\cal{B}^k\ s.t\ \supp\beta\cap\cell'\neq\emptyset\ \wedge\ \supp\beta\cap\cell\neq\emptyset\right\}
\end{equation}
Note that the support extension consist of cells from the tensor-product mesh $\cal{G}^k$.
To assess the locality of the basis; i.e, the influence of basis functions have on active cells, it is useful to consider a support extension consisting of all active cells belonging to its support regardless of level. For $\tau\in\mesh$ define
\begin{equation}
\omega_\tau=\bigcup_{\ell=0}^{L-1}S(\tau,\ell)\cap\mesh
\equiv\{\tau'\in\mesh:\supp\beta\cap \tau'\neq\emptyset\implies\supp\beta\cap \tau\neq\emptyset\},
\end{equation}
indicating the collection of all supports for basis function $\beta$'s whose supports intersect $\tau$.
Analogously, we denote the support extension for an edge $\sigma\in\edges_\mesh\cup\bedges_\mesh$ by
\begin{equation}
\omega_\sigma=\{\tau\in\mesh:\supp\beta\cap\tau\neq\emptyset\implies\supp\beta\cap \tau\neq\emptyset,\ \sigma\subset\diff\tau\}.
\end{equation}
The following auxiliary subdomain provides a way to ensure mesh admissibility 
\begin{equation}
\cal{U}^\ell:=\bigcup\left\{\overline{\cell}:\cell\in G^\ell\ \wedge\ S(\cell,\ell)\subseteq\Omega^\ell\right\}
\end{equation}
\begin{lemma}
Let $\bs{\Omega}^L$ be a subdomain hierarchy with respect to partition $\mesh$ of domain $\Omega$. If 
\begin{equation}
\Omega^\ell\subseteq\cal{U}^{\ell-1}
\end{equation}
for $\ell=2:L-1$, then $\mesh$ is an admissible partition.
\end{lemma}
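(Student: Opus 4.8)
The plan is to verify the admissibility criterion one cell at a time. Fix $\cell\in\mesh$ and let $m$ be its level, so that $\cell\subseteq\Omega^m$ while $\cell\cap\Omega^{m+1}=\emptyset$ by activeness; I will bound the levels $k$ of those THB-splines $\rm{Trunc}^{k+1}\beta\in\cal{T}_\mesh$ that do not vanish identically on $\cell$. That $k\le m$ is immediate: for $k\ge m+1$ one has $\supp(\rm{Trunc}^{k+1}\beta)\subseteq\supp\beta\subseteq\Omega^k\subseteq\Omega^{m+1}$ by nestedness of the hierarchy $\bs{\Omega}^L$, and this is disjoint from the open cell $\cell$. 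All of the work lies in excluding the levels $k\le m-2$, and this is where the hypothesis enters.

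The engine of the argument is a localization property of truncation: for every $\beta\in\cal{B}^k$ one has $\supp(\rm{Trunc}^{k+1}\beta)\cap\rm{int}(\cal{U}^{k+1})=\emptyset$. I would establish this in two steps. First, already the single step $\rm{trunc}^{k+1}\beta$ vanishes on $\rm{int}(\cal{U}^{k+1})$: unwinding the definition of $\cal{U}^{k+1}$ through support extensions shows that for $x\in\rm{int}(\cal{U}^{k+1})$ every $\gamma\in\cal{B}^{k+1}$ with $x\in\supp\gamma$ has $\supp\gamma\subseteq\Omega^{k+1}$; writing $\beta=\sum_{\gamma\in\cal{B}^{k+1}}c_\gamma^{k+1}(\beta)\gamma$ as in \eqref{eq:nestedrelation}, these are exactly the terms deleted by the truncation \eqref{eq:truncation}, so every term contributing to $\beta(x)$ is removed and $(\rm{trunc}^{k+1}\beta)(x)=0$. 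Second, each of the remaining truncations $\rm{trunc}^{k+2},\dots,\rm{trunc}^{L-1}$ only shrinks supports — it replaces a spline by a partial sum of its expansion in the next finer B-spline basis, whose members are supported inside the original support — so the vanishing on $\rm{int}(\cal{U}^{k+1})$ is inherited by $\rm{Trunc}^{k+1}\beta$.

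Granting this, suppose for contradiction that some $\rm{Trunc}^{k+1}\beta$ with $k\le m-2$ does not vanish on $\cell$. Since $\cell$ is open and $\supp(\rm{Trunc}^{k+1}\beta)$ avoids $\rm{int}(\cal{U}^{k+1})$, this forces $\cell\not\subseteq\cal{U}^{k+1}$. On the other hand, applying the hypothesis $\Omega^\ell\subseteq\cal{U}^{\ell-1}$ for $\ell=m,m-1,\dots,k+2$ — legitimate since $2\le k+2\le m\le L-1$, so each index lies in $\{2,\dots,L-1\}$ — together with the trivial inclusions $\cal{U}^{\ell-1}\subseteq\Omega^{\ell-1}$ yields
\[
\Omega^m\subseteq\cal{U}^{m-1}\subseteq\Omega^{m-1}\subseteq\cal{U}^{m-2}\subseteq\dots\subseteq\Omega^{k+2}\subseteq\cal{U}^{k+1},
\]
whence $\cell\subseteq\Omega^m\subseteq\cal{U}^{k+1}$ — a contradiction. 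Therefore every THB-spline that is nonzero on $\cell$ has level in $\{m-1,m\}$, which is precisely admissibility (for $m\le1$ no level $k\le m-2$ exists, so there is nothing to exclude).

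The step I expect to be most delicate is the localization property of $\rm{Trunc}$: matching the bookkeeping of the iterated truncation against the support-extension definition of $\cal{U}^{k+1}$, together with the minor topological care needed because the cells are open sets while $\cal{U}^{k+1}$ and the spline supports are unions of closed cells. Once that is secured, the chaining of the hypothesis across levels is purely set-theoretic and immediate.
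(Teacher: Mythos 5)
Your sketch is correct, and it actually supplies more than the paper does: the paper proves nothing here, deferring entirely to the citation of Buffa--Giannelli, and your argument is essentially a reconstruction of the standard proof from that reference (two-level exclusion via the chain $\Omega^m\subseteq\cal{U}^{m-1}\subseteq\Omega^{m-1}\subseteq\cdots\subseteq\cal{U}^{k+1}$ plus the vanishing of level-$k$ truncated splines on $\cal{U}^{k+1}$). The level bookkeeping is right: activeness of $\cell$ kills levels $k\ge m+1$ since $\supp(\rm{Trunc}^{k+1}\beta)\subseteq\supp\beta\subseteq\Omega^{k}\subseteq\Omega^{m+1}$, the hypothesis indices $\ell=k+2,\dots,m$ all lie in $\{2,\dots,L-1\}$, and $\cal{U}^{\ell}\subseteq\Omega^{\ell}$ is immediate because $\cell\in S(\cell,\ell)$. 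Two points you flag as delicate are indeed where the details live, and both go through: (i) for $x\in\rm{int}(\cal{U}^{k+1})$, any cell of $G^{k+1}$ whose closure contains $x$ must be one of the cells defining $\cal{U}^{k+1}$ (a neighbourhood of $x$ lies in $\cal{U}^{k+1}$ and open cells are disjoint), and then the support-extension condition $S(\cell'',k+1)\subseteq\Omega^{k+1}$ forces $\supp\gamma\subseteq\Omega^{k+1}$ for every $\gamma\in\cal{B}^{k+1}$ touching $x$, which is exactly the set of terms deleted by $\rm{trunc}^{k+1}$; (ii) the claim that subsequent truncations only shrink the support is true but is not automatic from the formula \eqref{eq:truncation} — it needs the local linear independence of the finer-level B-splines, which guarantees that any $\gamma$ with nonzero coefficient in the expansion of $s$ satisfies $\supp\gamma\subseteq\supp s$, so vanishing on $\rm{int}(\cal{U}^{k+1})$ is inherited by every partial sum. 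If you write this up, state (ii) explicitly as a lemma (it is exactly the "truncation preserves supports" property of THB-splines); with that, your argument is a complete and self-contained proof of the lemma the paper only cites.
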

\begin{proof}
See \cite{buffa2016adaptive}.
\end{proof}
In other words, $\cal{U}^\ell$ represents the biggest subset of $\Omega^\ell$ so that the set of B-splines in $\cal{B}^\ell$ whose support is contained in $\Omega^\ell$ spans the restriction of $\cal{S}^\ell$ to $\cal{U}^\ell$. 

\subsection{The adaptive method}

We now discuss the modules \textbf{SOLVE}, \textbf{ESTIMATE},
\textbf{MARK} and \textbf{REFINE} in detail.
\subsubsection*{The module SOLVE}
The space of piecewise polynomials of degree $r\ge2$ defined on a partition $\mesh$ will be given by
\begin{equation}
\cal{P}^r_\mesh(\Omega)=\prod_{\tau\in\mesh}\bb{P}_r(\tau).
\end{equation}
Assuming we have at our disposal a polynomial B-spline space $\Xp\subset\cal{P}_\mesh^r(\Omega)\cap H^2_0(\Omega)$ then the discrete problem reads
\begin{equation}\label{eq:dcp}
\U=\ms{SOLVE}[\mesh,f]:\quad\text{Find}\ \U\in\Xp\ \text{such that}\ \a(\U,\V)=\ellf(\V)\ \text{for all}\ \V\in\Xp.
\end{equation}
%
The linear system is numerically stable and consistent with \eqref{eq:cwp} in the sense that $\a(\u,\V)=\ellf(\V)$ for every $V\in\Xp$ and therefore we are provided with Galerkin orthogonality:
\begin{equation}\label{eq:cgo}
\a(\u-\U,\V)=0\quad\forall\V\in\Xp.
\end{equation}
Moreover, the spline solution will serve as an optimal approximation to $\u$ in $\Xp$ with respect to $\enorm{\cdot}{}$. Indeed, we have for any $\V\in\Xp$,
\begin{equation}
\enorm{\u-\U}{}^2\leq\Ccoer^{-1}\ap(\u-\U,\u-\V)\leq\frac{\Ccont}{\Ccoer}\enorm{\u-\U}{}\enorm{\u-\V}{}
\end{equation}
\subsubsection*{The module ESTIMATE}
For a continuous function $\v$ we define the jump operator across interface $\sigma$.
\begin{equation}
\jump{\v}{\sigma}=\lim_{t\to0}[\v(x+t\sigma)-\v(x-tx)],\quad x\in\sigma.
\end{equation}
The adaptive refinement procedure of method \eqref{eq:afem} will aim to reduce the error estimations instructed by the cell-wise error indicators:
\begin{equation}\label{eq:indicator}
\eta_\mesh^2(\V,\face)=h_\face^4\norm{f-\Op\V}{L^2(\tau)}^2
+\sum_{\sigma\subset\diff\tau}
\left(\textstyle h_\sigma^{3}\left\norm{\jump{\dfrac{\Lap\V}{\n_\sigma}}{\sigma}\right}{L^2(\sigma)}^2
+h_\sigma\norm{\jump{\Lap\V}{\sigma}}{L^2(\sigma)}^2\right).
\end{equation}
We can define the indicators on subsets of $\Omega$ via:
\begin{equation}\label{eq:estimator}
\eta_\mesh^2(\V,\omega)=\sum_{\tau\in \mesh:\tau\subset\omega}\eta_\mesh^2(\V,\face),\quad\omega\subseteq\Omega
\end{equation}
To each cell $\tau$ in mesh $\mesh$ the error indicators \eqref{eq:indicator} will assign error estimations:
\begin{equation}
\{\eta_\tau:\tau\in\mesh\}=\ms{ESTIMATE}[\U,\mesh]:\quad\eta_\tau:=\eta_\mesh(\U,\tau)
\end{equation}
\subsubsection*{The module MARK}
We follow the Dorlfer marking strategy \cite{dorfler1996convergent}: For $0<\theta\leq1$,
\begin{equation}\label{markingstrategy}
\text{Find minimal spline set}\ \marked:\quad\sum_{\cell\in\marked}\eta^2_\mesh(\U,\cell)\ge\theta\sum_{\tau\in\mesh}\eta_\mesh^2(\U,\cell).
\end{equation}
To ensure minimal cardinality of $\cal{M}$ in the marking strategy one typically undergoes QuickSort which has an average complexity of $\cal{O}(n\log n)$ to produce the indexing set $J$.

\subsubsection*{The module REFINE}
The refinement framework is designed to preserve the structure described in the previous section hinges on extending the marked cells obtained from module \textbf{MARK} to a set $\omega_{R_{\mesh\to\mesh_\ast}}$ for which the new mesh $\mesh_\ast$ is admissible. 
We define the \emph{neighbourhood} of $\cell\in\mesh\cap\cal{G}^{\ell}$ as
\[
\cal{N}(\mesh,\cell):=\left\{\cell'\in\mesh\cap G^{\ell-1}:\exists\cell''\in S(\cell,\ell),\ \cell''\subseteq\cell'\right\}
\]
when $\ell-1>0$, and $\cal{N}(\mesh,\cell)=\emptyset$ otherwise. To put in concrete terms, the neighbourhood $\cal{N}(\mesh,\cell)$ of an active cell in $G^\ell$ consist of active cells $\cell'$ of level $\ell-1$ overlapping the support extension of $\cell$ with respect to level $\ell$.
\begin {algorithm}[h]
\small
\caption {Recursive refinement $\mesh_\ast\leftarrow\ms{recursive\_refine}\,[\mesh,\cell]$}
\begin {algorithmic}[1]
\For {all $\cell'\in\cal{N}(\mesh,\cell)$}
\State $\mesh\leftarrow\ms{recursive\_refine}\,[\mesh,\cell']$
\EndFor
\If {$\cell\in\mesh$}
\State $\{\cell_j\}_{j=1}^{4}\leftarrow$ dyadic-refine $\cell$
\State $\mesh_\ast\leftarrow(\mesh\backslash\cell)\cup\{\cell_j\}_{j=1}^{4}$
\EndIf
\end {algorithmic}
\end {algorithm}
\begin{algorithm}[h]
\small
\caption{Carry admissible mesh refinement $\mesh_\ast\leftarrow\ms{mesh\_refine}\,[\mesh,\scr{M},m]$}
\begin{algorithmic}[1]
\For {$\cell\in\scr{M}$}
\State $\mesh\leftarrow\ms{recursive\_refine}\,[\mesh,\cell]$
\EndFor
\State $\mesh_\ast\leftarrow\mesh$
\end{algorithmic}
\end{algorithm}
Procedure \textbf{REFINE} will ensure that for a constant $\cshape>0$, depending only on the polynomial degree of the spline space, all considered partitions therefore will satisfy the shape-regularity constraints:
\begin{eqnarray}\label{eq:sr}
\nonumber\sup_{\mesh\in\scr{P}}\max_{\tau\in\mesh}\#\left\{\tau\in\mesh:\tau\in\omega_\tau\right\}\leq\cshape&&\text{(finite-intersection property)},\\
\sup_{\mesh\in\scr{P}}\max_{\tau\in\mesh}\frac{\diam(\omega_\tau)}{h_\tau}\leq \cshape
&&(\text{graded}).
\end{eqnarray}
For any two partitions $\mesh_1,\mesh_2\in\scr{P}$ there exists a common admissible partition in $\scr{P}$, called the \emph{overlay} and denoted by $\mesh_1\oplus \mesh_2$, such that 
\begin{equation}\label{eq:MeshOverelay}
\#(\mesh_1\oplus\mesh_2)\leq\#\mesh_1+\#\mesh_2-\#\mesh_0.
\end{equation}
Moreover, shown in \cite{buffa2016complexity},
if the sequence $\{\mesh_\ell\}_{\ell\ge1}$ is obtained by repeating the step $\mesh_{\ell+1}:=\ms{ REFINE}\,[\mesh_\ell,\scr{M}_\ell]$ with $\scr{M}_\ell$ any subset of $\mesh_\ell$, then for $k\ge1$ we have that
\begin{equation}\label{eq:MarkingComplexity}
\#P_k-\#P_\ell\leq \Lambda\sum_{\ell=1}^{k}\#\scr{M}_\ell.
\end{equation}
where $\Lambda>0$ which will depend on the polynomial degree $r$.

%
\section{A posteriori estimates}\label{sec:post}	
We define the residual quantity $\scr{R}\in H^{-2}(\Omega)$ by
\begin{equation}\label{eq:residual}
\inner{\scr{R},\v}=\a(\u-\U,\v),\quad\v\in H_0^2(\Omega).
\end{equation}
In view of continuity and coercivity of the bilinear form we readily have sharp a posteriori estimates for $\u-\U$
\begin{equation}
\Ccont^{-1}\norm{\scr{R}}{H^{-2}(\Omega)}\leq\enorm{\u-\U}{}\leq\Ccoer^{-1}\norm{\scr{R}}{H^{-2}(\Omega)}.
\end{equation}
The quantity $\norm{\scr{R}}{H^{-2}(\Omega)}$ is computable since it only depends on available discrete approximation of solution $\u$. We follow the techniques devised in \cite{verfurth1994posteriori},\cite{ainsworth2011posteriori} to approximate $\norm{\scr{R}}{H^{-2}(\Omega)}$.
\subsection{Approximation in $\Xp$}
To quantify the approximation power of $\Xp$, we use a quasi-interpolant $\Ip:L^2(\Omega)\to \Xp$;
see~\cite{speleers2017hierarchical},\cite{speleers2016effortless},\cite{bazilevs2006isogeometric} for the detailed construction of $\Ip$. The following theorem summarizes the local approximation properties of $\Ip$.
Various spline-based quasi-interpolants have been studied extensively and amounts to choosing dual-functionals $\lambda$. A suitable choice for B-spline basis is \cite{bazilevs2006isogeometric}.  To each level $\ell$ we assume we have in hand 
\begin{equation}\label{eq:LevelWiseInterpolants}
I^\ell(v)=\sum_{\beta\in\cal{B}^\ell}\lambda_\beta(v)\beta,\quad\v\in L^2(\Omega)
\end{equation}
such that $I^\ell(s)=s$ for every $s\in\cal{S}^\ell$. In \cite{speleers2016effortless} it is shown that it is sufficient to define $\Ip$ with
\begin{equation}
\Ip(v)=\sum_{\ell=0}^{L-1}\sum_{\beta\in\cal{B}^\ell\cap\cal{H}_\mesh}\lambda_\beta(v)\rm{Trunc}^{\ell+1}\beta,\quad\v\in L^2(\Omega)
\end{equation}
with each $\lambda_\beta$ being that of the one in the level-wise interpolant \eqref{eq:LevelWiseInterpolants}. 
\begin{theorem}[Quasi-interpolation]\label{thm:quasiinterpolant}
	There exists a quasi-interpolation projection operator $\Ip:L^2(\Omega)\to\Xp$ such that, for a constant $ \cshape>0$, independent of the refinement, and $0\leq t\leq2$,   
	\begin{equation}
	\norm{\Ip\v}{L^2(\tau)}\leq c_\mathrm{shape}\norm{\v}{L^2(\omega_\tau)}\quad \text{for}\ \tau\in\mesh ~\text{and}~\v\in L^2(\omega_\tau)
	\end{equation}
	and the approximation properties
	\begin{equation}
	\forall\v\in H^2(\omega_\tau),\quad|\v-\Ip\v|_{H^t({\tau})}\leq c_\mathrm{shape} h_{\tau}^{2-t}|\v|_{H^2(\omega_{\tau} )}\quad\forall\v\in H^2(\omega_\tau)
	\end{equation}
	and
	\begin{equation}
	\forall\sigma\in\cal{E}_\mesh\cup\cal{G}_\mesh,\quad |\v-\Ip\v|_{H^t(\sigma)}\leq c_\mathrm{shape} h_S^{3/2-t}|\v|_{H^2(\omega_\sigma)}\quad\forall\v\in H^2(\omega_\sigma) 
	\end{equation}
\end{theorem}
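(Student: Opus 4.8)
The plan is to build $\Ip$ from the level-wise quasi-interpolants $I^\ell$ and transfer their classical tensor-product B-spline approximation estimates to the hierarchical and truncated setting, exploiting locality at every step. First I would establish the stability bound $\norm{\Ip\v}{L^2(\tau)}\leq\cshape\norm{\v}{L^2(\omega_\tau)}$. Writing $\Ip\v=\sum_\ell\sum_{\beta\in\cal{B}^\ell\cap\cal{H}_\mesh}\lambda_\beta(v)\,\rm{Trunc}^{\ell+1}\beta$ and restricting to $\tau$, only finitely many basis functions contribute by admissibility — their supports lie in at most two successive levels and number at most $\cshape$ by the finite-intersection property \eqref{eq:sr}. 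Each dual functional $\lambda_\beta$ is local and $L^2$-bounded on a neighbourhood of $\supp\beta$ (this is the standard property of the Bazilevs et al.\ functionals), the truncation operator is a contraction in the relevant norms since it merely discards some non-negative partition-of-unity terms, and summing the finitely many contributions over $\omega_\tau$ gives the claim with a constant depending only on $r$ and $\cshape$.

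Next I would prove the interior estimate $|\v-\Ip\v|_{H^t(\tau)}\leq\cshape h_\tau^{2-t}|\v|_{H^2(\omega_\tau)}$ for $0\le t\le2$. The key is the projection property $I^\ell(s)=s$ for $s\in\cal{S}^\ell$, which via the construction in \cite{speleers2016effortless} promotes to $\Ip(s)=s$ for every $s$ in the hierarchical spline space $\Xp$. Hence for any local polynomial (or spline) approximant $\pi_\tau\v$ of $\v$ on $\omega_\tau$ we may write $\v-\Ip\v=(\v-\pi_\tau\v)-\Ip(\v-\pi_\tau\v)$ on $\tau$, apply the inverse inequality on the quasi-uniform cell $\tau$ to pass from $H^t$ to $L^2$ at the cost of $h_\tau^{-t}$, invoke the stability bound just established to control $\Ip(\v-\pi_\tau\v)$ in $L^2(\tau)$ by $\norm{\v-\pi_\tau\v}{L^2(\omega_\tau)}$, and finish with the Bramble--Hilbert lemma $\norm{\v-\pi_\tau\v}{L^2(\omega_\tau)}\leq\cshape h_\tau^{2}|\v|_{H^2(\omega_\tau)}$, valid because $\diam(\omega_\tau)\lesssim h_\tau$ by the graded property in \eqref{eq:sr}. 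Collecting the powers of $h_\tau$ yields the stated order.

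Finally, the trace estimate $|\v-\Ip\v|_{H^t(\sigma)}\leq\cshape h_S^{3/2-t}|\v|_{H^2(\omega_\sigma)}$ follows by combining the interior estimate with a scaled trace inequality. On a reference cell one has $\norm{w}{H^t(\hat\sigma)}\lesssim\norm{w}{H^t(\hat\tau)}^{1/2}\norm{w}{H^{t+1}(\hat\tau)}^{1/2}$ (or a direct multiplicative trace inequality); rescaling to a cell $\tau$ with $\sigma\subset\diff\tau$ inserts exactly one extra factor $h_\tau^{-1/2}$ relative to the interior bound, turning $h_\tau^{2-t}$ into $h_\tau^{3/2-t}$, and since $h_\sigma\eqsim h_\tau\eqsim h_S$ on the graded mesh the stated form follows after summing over the (boundedly many) cells in $\omega_\sigma$. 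The main obstacle I anticipate is the bookkeeping around truncation and admissibility: one must verify carefully that $\rm{Trunc}^{\ell+1}$ does not enlarge supports, that it is stable in $H^t$ for $0\le t\le2$ (not just $L^2$), and that the two-level admissibility restriction genuinely caps the number of active basis functions over any cell — all of which are needed to keep the constant independent of the refinement level $L$. Once locality is pinned down, the rest is the standard Bramble--Hilbert plus inverse-inequality machinery.
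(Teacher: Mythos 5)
The paper never actually proves Theorem~\ref{thm:quasiinterpolant}: it is stated as a summary of properties imported from \cite{speleers2016effortless}, \cite{speleers2017hierarchical}, \cite{bazilevs2006isogeometric} (and used later in Lemmas~\ref{lem:er} and \ref{lem:dre}), so there is no in-paper argument to compare against. Your sketch reconstructs essentially the standard proof those references use: local $L^2$-boundedness of the dual functionals together with the two-level admissibility restriction and the finite-intersection property \eqref{eq:sr} gives stability; local reproduction by $\Ip$, an inverse inequality on the single cell $\tau$ (where every contributing truncated B-spline is a polynomial), and a Bramble--Hilbert bound on $\omega_\tau$ (legitimate because $\diam\omega_\tau\lapprox h_\tau$ by gradedness) give the interior estimate; a scaled multiplicative trace inequality then gives the edge estimate. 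Your checklist about truncation (supports not enlarged, uniform two-scale coefficient bounds, constants independent of the level) is precisely where the cited works spend their effort, so the outline is sound. Two small technical points: the first term $\semi{\v-\pi_\tau\v}{H^t(\tau)}$ needs the $H^t$ form of Bramble--Hilbert (or interpolation between the integer cases), not only the $L^2$ form you quote; and the reproduction you actually need is local polynomial reproduction on $\omega_\tau$, which is weaker and safer than the global identity $\Ip s=s$ for $s\in\Xp$.

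Two caveats deserve explicit mention. First, your trace argument requires $\semi{\v-\Ip\v}{H^{t+1}(\tau)}$, which is not available for $\v$ merely in $H^2(\omega_\sigma)$ once $t>1$; as written it delivers the edge bound only for $0\leq t\leq1$ (and indeed the stated range $0\leq t\leq2$ cannot hold for general $H^2$ functions, whose traces lie only in $H^{3/2}(\sigma)$). This suffices for the paper, which uses $t=0$ and $t=1$ in \eqref{eq1:lem:er}, but you should state the restriction rather than claim the full range. Second, and more substantively, $\Xp\subset H^2_0(\Omega)$ is boundary conforming, so on cells touching $\Gamma$ the operator cannot reproduce constants, and the estimate $\semi{\v-\Ip\v}{H^t(\tau)}\lapprox h_\tau^{2-t}\semi{\v}{H^2(\omega_\tau)}$ cannot hold for arbitrary $\v\in H^2(\omega_\tau)$ (test with $\v\equiv1$). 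One must either restrict to $\v\in\Ho(\Omega)$ and replace Bramble--Hilbert on boundary cells by a Poincar\'e--Friedrichs argument exploiting the vanishing trace and normal derivative of $\v$ on $\Gamma$, or interpolate into the unconstrained spline space and correct at the boundary. Your proposal (like the theorem statement itself) glosses over this; since the reliability proof only ever applies $\Ip$ to $\v\in\Ho(\Omega)$, the repair is routine, but it is a genuine step that must be supplied.
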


Recall the general trace theorem \cite{adams1975sobolev},\cite{grisvard2011elliptic} for cells $\tau\in\mesh$ and edges $\sigma\in\bedges_\mesh$ with $\sigma\subset\diff\tau$.
For a constant $\ctrace>0$
\begin{equation}\label{eq:GeneralTrace}
\norm{\v}{L^2(\sigma)}^2\leq \ctrace\left(h_\sigma^{-1}\norm{\v}{L^2(\tau)}^2+h_\sigma\norm{\grad\v}{L^2(\tau)}^2\right)\quad\forall\v\in H^1(\Omega).
\end{equation}
\begin{lemma}[Auxiliary discrete estimate]\label{lem:ie}
Let $\face\in\mesh$.
Then for $\cinv>0$, depending only on polynomial degree $r$, for $0\leq s\leq t\leq r+1$ we have
\begin{equation}\label{eq:inve:lem:ie}
\semi{\V}{H^t(\tau)}\leq\cinv h_\tau^{s-t}\semi{\V}{H^s(\tau)}\quad\forall\V\in\bb{P}_r(\tau),
\end{equation}
and if $\edge\subset\diff\face$, for a constant $\cdtrace>0$ we have
\begin{equation}\label{eq:dtrace:lem:ie}
\norm{\V}{L^2(\sigma)}\leq\cdtrace h_\sigma^{-1/2}\norm{\V}{L^2(\tau)}\quad\forall\V\in\bb{P}_r(\tau),
\end{equation}
where $\cdtrace:=\ctrace\max\{1,\cinv\}$.
\end{lemma}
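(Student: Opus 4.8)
The plan is to prove the two inequalities \eqref{eq:inve:lem:ie} and \eqref{eq:dtrace:lem:ie} in sequence, obtaining the first by a scaling argument and the second by combining the first with the general trace inequality \eqref{eq:GeneralTrace}.

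\textbf{Step 1: the inverse inequality \eqref{eq:inve:lem:ie}.} First I would reduce to a reference configuration: since $\cell$ is a square cell of side length comparable to $h_\cell$, let $\Phi_\cell:\wh\cell\to\cell$ be the affine bijection from the unit reference square $\wh\cell=(0,1)^2$. For $\V\in\bb{P}_r(\cell)$ the pullback $\wh\V=\V\circ\Phi_\cell$ lies in $\bb{P}_r(\wh\cell)$, which is a fixed finite-dimensional space. On that space, all (semi)norms are equivalent, so in particular $\semi{\wh\V}{H^t(\wh\cell)}\le C(r,t,s)\,\semi{\wh\V}{H^s(\wh\cell)}$ for $0\le s\le t\le r+1$, with $C$ depending only on $r$, $t$, $s$. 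Then I would undo the change of variables: the chain rule produces factors of $h_\cell^{-t}$ (resp.\ $h_\cell^{-s}$) from differentiating $t$ (resp.\ $s$) times, and the Jacobian contributes a uniform factor $h_\cell^{2}$ to each $L^2$ norm over the two-dimensional cell, which cancels; the net effect is the announced power $h_\cell^{s-t}$. Taking $\cinv$ to be the maximum of the resulting constants over the finitely many admissible pairs $(s,t)$ with $s,t\in\{0,1,\dots,r+1\}$ — or, more carefully, noting that one only needs integer orders here and interpolating if fractional $t$ is wanted — yields \eqref{eq:inve:lem:ie}.

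\textbf{Step 2: the discrete trace inequality \eqref{eq:dtrace:lem:ie}.} For $\V\in\bb{P}_r(\cell)\subset H^1(\Omega)$ (interpreted cellwise; $\V$ is smooth on $\cell$) and $\edge\subset\diff\cell$, apply \eqref{eq:GeneralTrace} to get
\begin{equation*}
\norm{\V}{L^2(\edge)}^2\le\ctrace\left(h_\edge^{-1}\norm{\V}{L^2(\cell)}^2+h_\edge\norm{\grad\V}{L^2(\cell)}^2\right).
\end{equation*}
Now bound the gradient term using \eqref{eq:inve:lem:ie} with $s=0$, $t=1$: $\semi{\V}{H^1(\cell)}\le\cinv h_\cell^{-1}\norm{\V}{L^2(\cell)}$, so $h_\edge\norm{\grad\V}{L^2(\cell)}^2\le\cinv^2 h_\edge h_\cell^{-2}\norm{\V}{L^2(\cell)}^2$. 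Since $\edge\subset\diff\cell$ we have $h_\edge\le h_\cell$ (indeed $h_\edge$ is the side length of the square, so $h_\edge\lesssim h_\cell$ with a constant depending only on the cell geometry, which for axis-aligned squares is $h_\edge=h_\cell/\sqrt2$-type), hence $h_\edge h_\cell^{-2}\le h_\edge^{-1}$. Collecting, $\norm{\V}{L^2(\edge)}^2\le\ctrace(1+\cinv^2)h_\edge^{-1}\norm{\V}{L^2(\cell)}^2$, which gives \eqref{eq:dtrace:lem:ie} with constant $\ctrace\max\{1,\cinv\}$ after absorbing; I would simply record $\cdtrace:=\ctrace\max\{1,\cinv\}$ as stated (up to the harmless factor $\sqrt2$, which can be folded into $\cshape$-type conventions or into a slightly larger $\cdtrace$).

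\textbf{Main obstacle.} The only genuine subtlety is bookkeeping the powers of $h_\cell$ under the affine change of variables in Step 1 and making sure the constant genuinely depends only on $r$ (and the fixed pair $(s,t)$), not on the cell — this is where shape-regularity of the square cells enters, guaranteeing $\Phi_\cell$ has condition number bounded independently of $\mesh$. For non-integer $t$ one must additionally invoke a scaled fractional-order seminorm and either prove the equivalence directly on $\wh\cell$ (all norms on the finite-dimensional $\bb{P}_r(\wh\cell)$ are equivalent, including fractional Sobolev seminorms) or obtain it by interpolation between the integer cases; this is routine but worth a sentence. Everything else — the trace inequality, the $h_\edge\le h_\cell$ comparison for axis-aligned squares, the final algebra — is immediate.
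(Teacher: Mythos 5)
Your argument is correct and is essentially the route the paper intends: the lemma is stated without a written proof, and the paper's constant $\cdtrace=\ctrace\max\{1,\cinv\}$ reflects precisely your combination of the general trace bound \eqref{eq:GeneralTrace} with the scaled inverse estimate at $s=0$, $t=1$ (your $\ctrace(1+\cinv^2)$ versus the stated constant is harmless bookkeeping, as is the $h_\sigma\le h_\tau$ comparison for square cells). One small precision: on $\bb{P}_r(\wh{\tau})$ the seminorms are not mutually \emph{equivalent} (e.g.\ $\semi{\cdot}{H^s}$ annihilates $\bb{P}_{s-1}$), but the one-sided bound $\semi{\wh{\V}}{H^t(\wh{\tau})}\leq C(r,s,t)\semi{\wh{\V}}{H^s(\wh{\tau})}$ for $t\ge s$ does hold because the kernel of the right-hand seminorm is contained in that of the left on the finite-dimensional space $\bb{P}_r(\wh{\tau})$, which is all your scaling argument actually needs.
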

\begin{remark}
The constants $\cinv,\ \ctrace,\ \cdtrace$ all depend on the polynomial degree and the reference cell or edge; $\hat{\tau}=[0,1]^2$ or $\hat{\sigma}=[0,1]$.
From now, for a simpler presentation of the analysis, we combined all these constants, and their powers into a unifying constant $c_\ast$
\end{remark}

\subsection{The global upper bound}
We prove that the proposed error estimator is reliable.
\begin{lemma}[Estimator reliability]\label{lem:er}
Let $\mesh$ be a partition of $\Omega$ satisfying Conditions \eqref{eq:sr}.
The module $\ms{ESTIMATE}$ produces a posteriori error estimate $\eta_\mesh$ for the discrete error such that for a constants $C_\rm{rel}>0$,
\begin{equation}\label{eq:res:lem:er}
\begin{split}
\enorm{\u-\U}{}^2&\leq C_\rm{rel}\eta_\mesh^2(\U,\Omega)
\end{split}
\end{equation}
with constants depending only on $\cshape$.
\end{lemma}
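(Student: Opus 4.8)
The plan is to follow the standard residual-based reliability argument adapted to the $H^2_0$ setting with the $C^1$ spline space. Since $\enorm{\u-\U}{} = \norm{\Lap(\u-\U)}{L^2(\Omega)}$ and $\a$ has unit coercivity constant, it suffices to bound $\inner{\scr{R},\v}$ uniformly over $\v \in \Ho(\Omega)$ with $\enorm{\v}{} = 1$. First I would fix such a $\v$, let $\Ip$ be the quasi-interpolant from Theorem~\ref{thm:quasiinterpolant}, and use Galerkin orthogonality \eqref{eq:cgo} to write $\inner{\scr{R},\v} = \a(\u-\U,\v-\Ip\v)$. Because the discrete solution $\U$ is only piecewise polynomial (in particular $\Lap^2\U$ is defined cellwise and $\Lap\U$, $\diff_{\n}\Lap\U$ may jump across interior edges), I then integrate by parts \emph{cell by cell}: on each $\tau\in\mesh$,
\begin{equation}
(\Lap\U,\Lap(\v-\Ip\v))_{L^2(\tau)} = (\Lap^2\U,\v-\Ip\v)_{L^2(\tau)} + \text{boundary terms on }\diff\tau,
\end{equation}
where the boundary terms involve $\diff_\n\Lap\U$ tested against $\v-\Ip\v$ and $\Lap\U$ tested against $\diff_\n(\v-\Ip\v)$. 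Summing over all cells and using that $\v-\Ip\v$ and its gradient are continuous across interior edges (so only the \emph{jumps} of $\diff_\n\Lap\U$ and of $\Lap\U$ survive, with boundary-$\Gamma$ contributions vanishing since $\v\in\Ho$), I arrive at
\begin{equation}
\inner{\scr{R},\v} = \sum_{\tau\in\mesh}(f-\Op\U,\v-\Ip\v)_{L^2(\tau)} + \sum_{\sigma\in\edges_\mesh}\Big[(\jump{\diff_\n\Lap\U}{\sigma},\v-\Ip\v)_{L^2(\sigma)} - (\jump{\Lap\U}{\sigma},\diff_\n(\v-\Ip\v))_{L^2(\sigma)}\Big].
\end{equation}

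Next I would estimate each term by Cauchy–Schwarz followed by the approximation bounds of Theorem~\ref{thm:quasiinterpolant}. The volume term gives $\sum_\tau \norm{f-\Op\U}{L^2(\tau)}\,\norm{\v-\Ip\v}{L^2(\tau)} \le \sum_\tau \norm{f-\Op\U}{L^2(\tau)}\, c_\mathrm{shape} h_\tau^{2}\semi{\v}{H^2(\omega_\tau)}$, which matches the $h_\tau^4\norm{f-\Op\U}{L^2(\tau)}^2$ term in the indicator after a discrete Cauchy–Schwarz over cells and the finite-overlap property \eqref{eq:sr} (which turns $\sum_\tau \semi{\v}{H^2(\omega_\tau)}^2$ into $\lesssim \semi{\v}{H^2(\Omega)}^2 = \enorm{\v}{}^2$). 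For the two edge terms I would use the edge approximation estimate $\semi{\v-\Ip\v}{H^t(\sigma)} \le c_\mathrm{shape} h_\sigma^{3/2-t}\semi{\v}{H^2(\omega_\sigma)}$ with $t=0$ for the first and $t=1$ for the second, producing exactly the weights $h_\sigma^{3}\norm{\jump{\diff_\n\Lap\U}{\sigma}}{L^2(\sigma)}^2$ and $h_\sigma\norm{\jump{\Lap\U}{\sigma}}{L^2(\sigma)}^2$ appearing in \eqref{eq:indicator}. Collecting everything and again invoking the finite-intersection property to control $\sum_\sigma\semi{\v}{H^2(\omega_\sigma)}^2 \lesssim \semi{\v}{H^2(\Omega)}^2$ yields $\inner{\scr{R},\v} \le C_\rm{rel}^{1/2}\,\eta_\mesh(\U,\Omega)\,\enorm{\v}{}$, and taking the supremum over $\v$ gives the claim with $\Crel$ depending only on $\cshape$ (through $c_\mathrm{shape}$, the overlap constants, and the reference-element constant $c_\ast$).

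The main obstacle I anticipate is the bookkeeping in the cellwise integration by parts: one must be careful that the fourth-order biharmonic form produces \emph{two} distinct edge contributions (from the second Green's identity applied to $\Lap$ twice), that the correct normal-derivative conventions and jump signs are used, and — crucially — that the continuity of $\Ip\v$ \emph{and} of $\grad\Ip\v$ across interior edges is what kills the would-be contributions of $\jump{\diff_\n\Lap\U}{}$ and $\jump{\Lap\U}{}$ paired against the smooth part, leaving only the $\v-\Ip\v$ pieces. This relies on $\Xp \subset H^2_0(\Omega)$, i.e. that the THB-spline functions are genuinely $C^1$; I would state this explicitly. A secondary technical point is that $\Ip$ as stated maps $L^2\to\Xp$ but the approximation estimates require $\v\in H^2(\omega_\tau)$, which is fine here since $\v\in\Ho(\Omega)\subset H^2(\Omega)$; and one should note the homogeneous boundary conditions on $\v$ are what make the $\Gamma$-edge terms vanish so that no boundary-jump terms enter the estimator. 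Everything else is routine Cauchy–Schwarz and summation.
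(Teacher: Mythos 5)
Your proposal is correct and follows essentially the same route as the paper: cellwise integration by parts of the residual, rewriting the cell-boundary terms as interior-edge jumps, subtracting the quasi-interpolant $\Ip\v$ via Galerkin orthogonality, and then Cauchy--Schwarz together with the approximation estimates of Theorem~\ref{thm:quasiinterpolant} (with $t=0$ and $t=1$ on edges) and the finite-intersection property \eqref{eq:sr} to absorb $\sum_\tau\norm{\v}{H^2(\omega_\tau)}^2$ into $\norm{\v}{H^2(\Omega)}^2$. The only cosmetic difference is that you invoke Galerkin orthogonality before integrating by parts while the paper introduces $\v-\Ip\v$ only at the estimation step; this is the same argument, and your explicit remarks on the $C^1$ continuity of $\Xp$ and the vanishing boundary-edge terms merely make precise what the paper leaves implicit.
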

\begin{proof}
In this proof we will derive a localized quantification for the residual $\scr{R}$.
In view of definition \eqref{eq:residual} we follow standard procedure and integrate by parts to obtain
 Let
\[
\inner{\scr{R},\v}=\a(\u-\U,\v)\quad\forall\v\in H^2_0(\Omega)
\]
We will derive a localized quantification for the residual $\scr{R}_\mesh$ which will provide a sharp upper-bound estimate for residual.
\begin{equation}
\begin{split}
\inner{\scr{R},\v}=&\sum_{\tau\in\mesh}\left(
\int_\tau (f-\Op\U)\v+\int_{\diff\tau}\Lap\U{\textstyle\dfrac{\v}{\n_\tau}}
-\int_{\diff\tau}{\textstyle\dfrac{\Lap\U}{\n_\tau}\v}\right)
\end{split}
\end{equation}
Expressing all the integrals over cell boundaries as integrals over edges,
\begin{equation}
\begin{split}
\sum_{\tau\in\mesh}\left(\int_{\diff\tau}\Lap\U{\textstyle\dfrac{\v}{\n_\tau}}
-\int_{\diff\tau}{\textstyle\dfrac{\Lap\U}{\n_\tau}\v}\right)
=\sum_{\sigma\in\edges_\mesh}\left(\int_\sigma\jump{\textstyle\dfrac{\Lap\U}{\n_\sigma}}{\sigma}\v
-\int_\sigma\jump{\Lap\U}{\sigma}{\textstyle\dfrac{\v}{\n_\sigma}}\right).
\end{split}
\end{equation}
%
We have
\begin{equation}\label{eq1:lem:er}
\begin{split}
|\!\inner{\scr{R},\v}\!|\leq&\sum_{\tau\in\mesh}\norm{f-\Op\U}{L^2(\tau)}\norm{\v-\Ip\v}{L^2(\tau)}
+\sum_{\sigma\in\edges_\mesh}\left\norm{\jump{\textstyle\dfrac{\Lap\U}{\n_\sigma}}{\sigma}\right}{L^2(\sigma)}\left\norm{\textstyle(\v-\Ip\v)\right}{L^2(\sigma)}\\
&+\sum_{\sigma\in\edges_\mesh}\norm{\jump{\Lap\U}{\sigma}}{L^2(\sigma)}\left\norm{{\textstyle\dfrac{}{\n_\sigma}}(\v-\Ip\v)\right}{L^2(\sigma)}.
\end{split}
\end{equation}
%
We define the interior residual $R=f-\Op\U$ and jump terms $J_1=\textstyle\jump{\dfrac{\Lap\U}{\n_\sigma}}{\sigma}$ and $J_2=\jump{\Lap\U}{\sigma}$ across  each interior edge $\sigma$.
%
Starting with the first three terms in \eqref{eq1:lem:er}, we use the approximation results from Lemma \ref{lem:ie} to estimate interior residual terms
\begin{equation}
\begin{split}
\sum_{\tau\in\mesh}\norm{R}{L^2(\tau)}\norm{\v-\Ip\v}{L^2(\tau)}
&\leq\sum_{\tau\in\mesh}\norm{R}{L^2(\tau)} \cproj h_\tau^2\norm{\v}{L^2(\omega_\tau)},\\
&\leq \cproj \bigg(\sum_{\tau\in\mesh}h_\tau^4\norm{R}{L^2(\tau)}^4\bigg)^{1/2}
\bigg(\sum_{\tau\in\mesh}\norm{\v}{H^2(\omega_\tau)}^2\bigg)^{1/2}.
\end{split}
\end{equation}
%
As for the interior edge jump terms,
\begin{equation}
\begin{split}
\sum_{\sigma\in\edges_\mesh}\left\norm{J_1\right}{L^2(\sigma)}\left\norm{\textstyle(\v-\Ip\v)\right}{L^2(\sigma)}
&\leq\sum_{\sigma\in\edges_\mesh}\left\norm{J_1\right}{L^2(\sigma)}\cproj h_\sigma^{3/2}\norm{\v}{L^2(\omega_\sigma)},\\
&\leq \cproj \bigg(\sum_{\sigma\in\edges_\mesh}h_\sigma^3\norm{J_1}{L^2(\sigma)}^2\bigg)^{1/2}
\bigg(\sum_{\sigma\in\edges_\mesh}\norm{\v}{H^2(\omega_\sigma)}^2\bigg)^{1/2},
\end{split}
\end{equation}
and
\begin{equation}
\begin{split}
\sum_{\sigma\in\edges_\mesh}\left\norm{J_2\right}{L^2(\sigma)}\left\norm{{\textstyle\dfrac{}{\n_\sigma}}(\v-\Ip\v)\right}{L^2(\sigma)}
&\leq\sum_{\sigma\in\edges_\mesh}\left\norm{J_2\right}{L^2(\sigma)}\cproj h_\sigma^{1/2}\norm{\v}{L^2(\omega_\sigma)},\\
&\leq \cproj \bigg(\sum_{\sigma\in\edges_\mesh}h_\sigma\norm{J_2}{L^2(\sigma)}^2\bigg)^{1/2}
\bigg(\sum_{\sigma\in\edges}\norm{\v}{H^2(\omega_\sigma)}^2\bigg)^{1/2}.
\end{split}
\end{equation}
%
From the finite-intersection property \eqref{eq:sr} we have $\sum_{\sigma\in\edges}
\norm{\v}{H^2(\omega_\sigma)}^2\leq \cshape\norm{\v}{H^2(\Omega)}^2$.
and using \eqref{eq:sr} we have $\sum_{\sigma\in\bedges}\norm{\v}{H^2(\tau(\sigma))}^2\leq \cshape\norm{\v}{H^2(\Omega)}^2$.
%
Summing up we arrive at
\begin{equation}
\begin{split}
|\!\inner{\scr{R},\v}\!|&\leq \cproj\cshape \left\{\bigg(\sum_{\tau\in\mesh}h_\tau^4\norm{R}{L^2(\tau)}^2\bigg)^{1/2}
+\bigg(\sum_{\sigma\in\edges_\mesh}h_\sigma^3\left\norm{J_1\right}{L^2(\sigma)}^2\bigg)^{1/2}\right.\\
&\quad\left.+\bigg(\sum_{\sigma\in\edges_\mesh}h_\sigma\norm{J_2}{L^2(\sigma)}^2\bigg)^{1/2}
\right\}\norm{\v}{H^2(\Omega)}
\end{split}
\end{equation}
\end{proof}

\subsection{Global lower bound}
We will define extension operators $E_\sigma:C(\sigma)\to C(\tau)$ for all edges $\sigma$ with $\sigma\subset\diff\tau$.
Let $\hat{\tau}=[0,1]\times[0,1]$ and $\hat{\sigma}=[0,1]\times\{0\}$.
Let $F_\tau:\bb{R}^2\to\bb{R}^2$ be the affine transformation comprising of translation and scaling mapping $\hat{\tau}$ onto $\tau$ and $\hat{\sigma}$ onto $\sigma$.
Define $\hat{E}:C(\hat{\sigma})\to C(\hat{\tau})$ via
\begin{equation}
\hat{E}v(x,y)=v(x)\quad \forall x\in\hat{\sigma},\quad(x,y)\in\hat{\tau},\quad v\in C(\hat{\sigma}).
\end{equation}
To this end, let $\sigma$ be an edge of a cell $\tau\in\mesh$, then define $E_\sigma:C(\sigma)\to C(\tau)$ via
\begin{equation}
E_\sigma v:=[\hat{E}(v\circ F_\tau)]\circ F_\tau^{-1}.
\end{equation}
In other words extending the values of $\v$ from $\sigma$ into $\tau$ along inward $\n_\sigma$.
Let $\psi_\tau$ be any smooth cut-off function with the following properties:
\begin{equation}\label{eq:interiorbubble}
\supp\psi_\tau\subseteq\tau,
\quad\psi_\tau\ge0,
\quad\max_{x\in\tau}\psi_\tau(x)\leq1.
\end{equation}
Furthermore, we will define for $\sigma=\diff\tau_1\cap\diff\tau_2=:D_\sigma$ two $C^1$ cut-off functions $\psi_\sigma$ and $\chi_\sigma$ via
with the following
\begin{equation}
\quad\supp\chi_\sigma\subseteq D_\sigma,
\quad\supp\psi_\sigma\subseteq D_\sigma,
\quad\psi_\tau\ge0,
\quad\max_{x\in\tau}\psi_\tau(x)\leq1,
\end{equation}
such that
\begin{equation}
\textstyle\ \dfrac{\psi_\sigma}{\n_\sigma}\equiv0,
\quad\chi_\sigma\equiv0
\quad \text{and}\quad d_1h_\sigma^{-1}\psi_\sigma\leq\dfrac{\chi_\sigma}{\n_\sigma}\leq d_2h_\sigma^{-1}\psi_\sigma\quad \text{along}\ \sigma.
\end{equation}
Let $\hat{\sigma}$ and $\hat{\tau}$ retain the same meanings as before.
Let $\hat{\tau}_1=\tau$ and let $\hat{\tau}_2=\{(x,y)\in\bb{R}^2:(x,-y)\in\hat{\tau}\}$ and let $\n=(0,-1)$
Now let $F_\sigma:\bb{R}^2\to\bb{R}^2$ be the affine map that maps $\hat{D}$ onto $D_\sigma$ and define
\begin{equation}\label{eq:edgebubbles}
\psi_\sigma=\hat{\psi}\circ F_\sigma^{-1},\quad
\chi_\sigma=\hat{\chi}\circ F_\sigma^{-1}.
\end{equation}
unit normal vector on $\hat{\sigma}$.
Let $g(y)$ be the cubic polynomial satisfying
\begin{equation}
g^{(4)}(y)=0\ \text{for}\ y\in(0,1)
\ \text{such that}\
g'(0)=g(1)=g'(1)=0\ \text{and}\ g(0)=1.
\end{equation}
Put $\phi(y)=g(y)\ds{1}_{[0,1]}+g(-y)\ds{1}_{[-1,0]}$.
Now define $G$ to be the quartic polynomial satisfying
\begin{equation}
G^{(5)}(y)=0\ \text{for}\ y\in(0,1)\ \text{such that}\
G(0)=G(1)=G'(1)=0\
\text{and}\ G'(0)=\phi(0).
\end{equation}
Put $\eta(y)=G(y)\ds{1}_{[0,1]}-G(-y)\ds{1}_{[-1,0]}$.
Finally, let $H=x^2(1-x)^2(1-y^2)^2$ and let $\Phi(x,y)=H(x,y)\ds{1}_{y\ge0}+H(x,-y)\ds{1}_{y\leq 0}$.
Finally set
\begin{equation}
\hat{\psi}(x,y)=\Phi(x,y)\phi(y),\quad\hat{\chi}(x,y)=\Phi(x,y)\eta(y),\quad(x,y)\in\hat{D}
\end{equation}
\begin{lemma}[Localizing estimates]\label{lem:LocalizingEstimates}
Let $\mesh$ be a partition of $\Omega$ satisfying Conditions \eqref{eq:sr}.
Let $\tau$ be a cell in partition $\mesh$.
For a constant $c_m>0$ depending only on polynomial degree $m$,
\begin{equation}\label{eq1:result:lem:LocalizingEstimates}
\norm{q}{L^2(\tau)}^2\leq \cc\int_\tau\psi_\tau q^2\quad\forall q\in\bb{P}_m(\tau),
\end{equation}
Let $\sigma$ be an edge in $\edges_\mesh$ and let $\tau_1$ and $\tau_2$ be cells from $\mesh$ for which $\sigma\subseteq\overline{\tau_1}\cap\overline{\tau_2}$.
We also have
\begin{equation}\label{eq2:result:lem:LocalizingEstimates}
\norm{q}{L^2(\sigma)}^2\leq \cc\int_\sigma\psi_\sigma q^2
\end{equation}
and
\begin{equation}\label{eq3:result:lem:LocalizingEstimates}
\norm{\psi_\sigma^{1/2} E_\sigma q}{L^2(\tau)}\leq \cc h_\sigma^{1/2}\norm{q}{L^2(\sigma)}\quad\forall\tau\in D_\sigma
\end{equation}
holding for every $q\in\bb{P}_m(\sigma)$.
\end{lemma}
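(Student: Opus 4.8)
The plan is to prove each of the three estimates by the standard Verfürth "bubble function" scaling argument, pulling everything back to the reference configuration $\hat\tau=[0,1]^2$ and $\hat\sigma=[0,1]\times\{0\}$ and exploiting the equivalence of norms on finite-dimensional polynomial spaces. For \eqref{eq1:result:lem:LocalizingEstimates}, I would first observe that on the reference cell both $q\mapsto\|q\|_{L^2(\hat\tau)}^2$ and $q\mapsto\int_{\hat\tau}\hat\psi\,q^2$ are continuous on $\bb{P}_m(\hat\tau)$; the latter is in fact a norm squared because $\hat\psi>0$ on the open cell and $q$ is a polynomial (so $\int_{\hat\tau}\hat\psi\,q^2=0$ forces $q\equiv0$). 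By compactness of the unit sphere in the finite-dimensional space $\bb{P}_m(\hat\tau)$, there is a constant $\hat c_m$ with $\|q\|_{L^2(\hat\tau)}^2\le\hat c_m\int_{\hat\tau}\hat\psi\,q^2$ for all $q\in\bb{P}_m(\hat\tau)$. Then I transport this to a general cell $\tau$ via the affine map $F_\tau$: both sides pick up the same Jacobian factor $|\det DF_\tau|\sim h_\tau^2$, which therefore cancels, and $\psi_\tau=\hat\psi\circ F_\tau^{-1}$ satisfies the required support and boundedness conditions \eqref{eq:interiorbubble}, yielding \eqref{eq1:result:lem:LocalizingEstimates} with $\cc$ depending only on $m$ (and the shape of the reference cell, hence only on $m$).

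For \eqref{eq2:result:lem:LocalizingEstimates} the argument is identical but one dimension lower: on $\hat\sigma=[0,1]$ the weight $\hat\psi|_{\hat\sigma}$ is a fixed positive polynomial-times-cutoff that is strictly positive on $(0,1)$, so $q\mapsto\int_{\hat\sigma}\hat\psi\,q^2$ is again a norm on $\bb{P}_m(\hat\sigma)$, equivalent to $\|\cdot\|_{L^2(\hat\sigma)}^2$ by finite-dimensionality. Transporting along $F_\sigma$ introduces a one-dimensional Jacobian $\sim h_\sigma$ on both sides, which cancels, giving \eqref{eq2:result:lem:LocalizingEstimates}. For \eqref{eq3:result:lem:LocalizingEstimates}, I would use the extension operator $E_\sigma$ defined above: on the reference configuration, $E_{\hat\sigma}q(x,y)=q(x)$ is constant in the transverse variable, so $\|\hat\psi^{1/2}E_{\hat\sigma}q\|_{L^2(\hat\tau)}^2=\int_{\hat\tau}\hat\psi(x,y)\,q(x)^2\,dx\,dy\le\big(\int_0^1\!\!\int_{-1}^1\hat\psi\,dy\big)_{\sup}\|q\|_{L^2(\hat\sigma)}^2\le \hat c_m\|q\|_{L^2(\hat\sigma)}^2$, again by boundedness of $\hat\psi$. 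Pulling back by $F_\tau$ (equivalently $F_\sigma$ restricted to one of the two cells), the $L^2(\tau)$ norm scales like $h_\sigma^{}$ in measure, i.e. a factor $h_\sigma$ at the level of the squared norm against the $h_\sigma^{1/2}$ of the edge norm, producing the factor $h_\sigma^{1/2}$ in \eqref{eq3:result:lem:LocalizingEstimates}.

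I do not expect a serious obstacle; the only point requiring a little care is bookkeeping of the Jacobian powers so that the mesh-size factors land correctly (degree $4$ for cells in the reliability estimate context, degree $3$ and $1$ for the two edge terms), and checking that the explicitly constructed $\hat\psi,\hat\chi$ indeed vanish to the prescribed orders on $\hat\sigma$ and $\diff\hat\tau$ so that $\psi_\sigma\in C^1$, $\diff_{\n_\sigma}\psi_\sigma\equiv0$ on $\sigma$, and $E_\sigma q$ stays admissible — but these are direct consequences of the defining properties of $g,G,\phi,\eta,H,\Phi$ stated before the lemma. The mild subtlety is that $E_\sigma q$ is a polynomial on $\tau$ only in the tangential variable, so when invoking the norm-equivalence one must work in the (still finite-dimensional) space of such transversally-constant polynomials rather than all of $\bb{P}_m(\tau)$; this causes no difficulty.
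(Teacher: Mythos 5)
Your proposal is correct and follows essentially the same route as the paper: pull back to the reference cell/edge via the affine maps and use equivalence of norms on the finite-dimensional polynomial spaces, with the explicit bubbles $\hat\psi,\hat\chi$ transported by $F_\tau$, $F_\sigma$. The only differences are cosmetic — the paper simply cites Verf\"urth and Ainsworth--Oden for \eqref{eq1:result:lem:LocalizingEstimates}--\eqref{eq2:result:lem:LocalizingEstimates} (your compactness argument is exactly the cited proof), and for \eqref{eq3:result:lem:LocalizingEstimates} the paper invokes the equivalence $\hat q\mapsto\|\hat E\hat q\|_{L^2(\hat\tau)}$ on $\bb{P}_m(\hat\sigma)$ where you bound the transversally-constant extension directly by Fubini and $\hat\psi\leq1$, which is a slightly more elementary way to the same scaling $|\tau|^{1/2}h_\sigma^{-1/2}\lesssim h_\sigma^{1/2}$.
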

%
\begin{proof}
Relations \eqref{eq1:result:lem:LocalizingEstimates} and \eqref{eq2:result:lem:LocalizingEstimates} are proven in the same fashion as in \cite{ainsworth2011posteriori}, \cite{verfurth1994posteriori}.
We focus on \eqref{eq3:result:lem:LocalizingEstimates}.
We prove that $q\mapsto\norm{\psi_\sigma^{1/2} E_\sigma q}{L^2(\tau)}$ is a norm on $\bb{P}_m(\sigma)$.
\begin{equation}
\begin{split}
\norm{E_\sigma q}{L^2(\tau)}
&=|\tau|^{1/2}\norm{\hat{E}(q\circ F_\tau)}{L^2(\hat{\tau})}.
\end{split}
\end{equation}
It clear that $\hat{q}\in\bb{P}_m(\hat{\sigma})$ is identically zero if and only if its extension $\hat{E}\hat{q}$ is identically zero on $\hat{\tau}$.
So $\hat{q}\mapsto\norm{\hat{E}\hat{q}}{L^2(\hat{\tau})}$ is an equivalent norm on $\bb{P}_m(\hat{\sigma})$ we have
\begin{equation}
\norm{\hat{E}(q\circ F_\tau)}{L^2(\hat{\tau})}\leq c\norm{q\circ F_\tau}{L^2{\hat{\sigma}}}=ch_\sigma^{-1/2}\norm{q}{L^2(\sigma)}
\end{equation}
so with $c|\tau|^{1/2}h_\sigma^{1/2}\leq\overline{c}h_\sigma^{1/2}$
\begin{equation}
\norm{\psi_\sigma^{1/2}E_\sigma q}{L^2(\tau)}\leq\overline{c}h_\sigma^{1/2}\norm{q}{L^2(\sigma)}.
\end{equation}
\end{proof}
%
\begin{lemma}[Estimator efficiency]\label{lem:EstimatorEfficiency}
Let $\mesh$ be a partition of $\Omega$ satisfying conditions \eqref{eq:sr}.
The module $\ms{ESTIMATE}$ produces a posteriori error estimate of the discrete solution error such that
\begin{equation}\label{eq:result:lem:EstimatorEfficiency}
\Ceff\,\eta_\mesh^2(\U,\Omega)\leq\enorm{\u-\U}{}^2+\osc_\mesh^2(\Omega).
\end{equation}
with constant $\Ceff$ depending only on $\cshape$.
\end{lemma}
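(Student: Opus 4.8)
The plan is to establish the efficiency bound \eqref{eq:result:lem:EstimatorEfficiency} by estimating each of the three contributions to $\eta_\mesh^2(\U,\tau)$ separately, using the standard bubble-function technique of Verf\"urth adapted to the fourth-order setting, with the cut-off functions $\psi_\tau$, $\psi_\sigma$, $\chi_\sigma$ constructed above and the localizing estimates of Lemma~\ref{lem:LocalizingEstimates} providing the crucial norm equivalences. Throughout I write $e=\u-\U$, $R=f-\Op\U$, $J_1=\jump{\diff_{\n_\sigma}\Lap\U}{\sigma}$, $J_2=\jump{\Lap\U}{\sigma}$, and I let $\bar R$ denote the element-wise $L^2$-projection of $f$ onto $\bb{P}_{r}(\tau)$ (or a suitable polynomial degree), so that $\osc_\mesh^2(\tau)=h_\tau^4\norm{f-\bar R}{L^2(\tau)}^2$ absorbs the data approximation error; the oscillation terms on the right-hand side of \eqref{eq:result:lem:EstimatorEfficiency} are exactly what one pays for replacing $R$ by its polynomial part in the bubble argument.

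\textbf{Step 1 (interior residual).} I would test the weak residual identity against $\v=\psi_\tau\bar R_\tau^{\,\mathrm{poly}}$ extended by zero, where $\bar R_\tau^{\,\mathrm{poly}}\in\bb{P}_m(\tau)$ is the polynomial part of $R$ on $\tau$. Since $\diff_{\n_\tau}\v$ and $\v$ vanish on $\diff\tau$, the integration-by-parts formula from Lemma~\ref{lem:er} collapses to $\a(e,\v)=\int_\tau R\,\v$, so that $\int_\tau R\,\psi_\tau\bar R_\tau^{\,\mathrm{poly}} = \a(e,\v) - \int_\tau (R-\bar R_\tau^{\,\mathrm{poly}})\psi_\tau\bar R_\tau^{\,\mathrm{poly}}$. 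Using \eqref{eq1:result:lem:LocalizingEstimates} to bound $\norm{\bar R_\tau^{\,\mathrm{poly}}}{L^2(\tau)}^2$ by $\cc\int_\tau\psi_\tau(\bar R_\tau^{\,\mathrm{poly}})^2$, the inverse estimate \eqref{eq:inve:lem:ie} from Lemma~\ref{lem:ie} to transfer derivatives on $\v$ at a cost of $h_\tau^{-2}$, Cauchy--Schwarz, and $\semi{\v}{H^2(\tau)}\leq c_\ast h_\tau^{-2}\norm{\bar R_\tau^{\,\mathrm{poly}}}{L^2(\tau)}$, one arrives at $h_\tau^4\norm{R}{L^2(\tau)}^2\lesssim \enorm{e}{\tau}^2 + h_\tau^4\norm{f-\bar R_\tau^{\,\mathrm{poly}}}{L^2(\tau)}^2$, i.e. the interior term is controlled by local energy error plus oscillation.

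\textbf{Step 2 (jump terms).} For the edge contributions I would test against functions supported on $D_\sigma=\tau_1\cup\tau_2$. For $J_2=\jump{\Lap\U}{\sigma}$ I use $\v=\chi_\sigma E_\sigma \bar J_2$ where $\bar J_2\in\bb{P}_m(\sigma)$ approximates $J_2$; because $\chi_\sigma$ vanishes on $\sigma$ while $\diff_{\n_\sigma}\chi_\sigma\sim h_\sigma^{-1}\psi_\sigma$ there, the boundary terms in the integration by parts isolate $\int_\sigma J_2\,\diff_{\n_\sigma}\v = \a(e,\v)-\int_{D_\sigma} R\,\v$; bounding $\int_{D_\sigma}R\,\v$ by the already-controlled interior term, invoking \eqref{eq2:result:lem:LocalizingEstimates} and \eqref{eq3:result:lem:LocalizingEstimates} together with inverse/scaled-trace estimates, and tracking the $h_\sigma$ powers yields $h_\sigma\norm{J_2}{L^2(\sigma)}^2\lesssim \enorm{e}{\omega_\sigma}^2+\osc_\mesh^2(\omega_\sigma)$. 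For $J_1=\jump{\diff_{\n_\sigma}\Lap\U}{\sigma}$ I instead use $\v=\psi_\sigma E_\sigma\bar J_1$, for which $\diff_{\n_\sigma}\psi_\sigma\equiv0$ on $\sigma$ so the other boundary term survives: $\int_\sigma J_1\,\v = \a(e,\v)-\int_{D_\sigma}R\,\v + (\text{already-bounded }J_2\text{ term})$; the same machinery, now with the weight $h_\sigma^{-3}$ coming from three inverse-estimate applications on the $H^2$ seminorm of $\v$, gives $h_\sigma^3\norm{J_1}{L^2(\sigma)}^2\lesssim \enorm{e}{\omega_\sigma}^2+\osc_\mesh^2(\omega_\sigma)$.

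\textbf{Step 3 (summation).} Finally I sum the three local bounds over all $\tau\in\mesh$ and $\sigma\in\edges_\mesh$; the finite-overlap property in \eqref{eq:sr} ensures $\sum_\sigma\enorm{e}{\omega_\sigma}^2\leq\cshape\enorm{e}{\Omega}^2$ and likewise for the oscillation, so the patch norms collapse to global norms with a constant depending only on $\cshape$, giving \eqref{eq:result:lem:EstimatorEfficiency} with $\Ceff^{-1}$ absorbing all accumulated constants. The main obstacle I anticipate is \textbf{Step 2}: unlike the second-order case, the fourth-order problem has two distinct jump quantities ($\jump{\Lap\U}{\sigma}$ and $\jump{\diff_{\n_\sigma}\Lap\U}{\sigma}$) that are coupled through the integration-by-parts identity, so the test functions must be engineered (via the carefully prescribed boundary behavior of $\psi_\sigma$ and $\chi_\sigma$ and the $C^1$-matching across $\sigma$) to decouple them one at a time — estimating $J_2$ first and then feeding that bound into the $J_1$ estimate — while simultaneously keeping the $E_\sigma$-extended bubbles in $H^2_0(D_\sigma)$ so they are admissible test functions; verifying that the constructed $\hat\psi,\hat\chi$ genuinely satisfy all the stated sign, support, and normal-derivative conditions, and that the scaling in \eqref{eq3:result:lem:LocalizingEstimates} produces exactly the right powers of $h_\sigma$, is where the delicate bookkeeping lies.
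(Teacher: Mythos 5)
Your proposal is correct and follows essentially the same route as the paper: element and edge bubble functions with the localizing estimates of Lemma~\ref{lem:LocalizingEstimates}, inverse/scaled-trace estimates to track the powers of $h_\tau$ and $h_\sigma$, a polynomial projection of $R$ producing the oscillation term, and a final summation using the finite-overlap property in \eqref{eq:sr}. The only cosmetic difference is the handling of the jump terms: in the paper the constant normal extension makes each boundary term vanish exactly (testing with $\psi_\sigma J_1$ kills the $J_2$ term since $\diff_{\n_\sigma}\psi_\sigma\equiv0$ on $\sigma$, and testing with $\chi_\sigma J_2$ kills the $J_1$ term since $\chi_\sigma\equiv0$ on $\sigma$), so the two jump estimates are fully decoupled rather than estimated sequentially with one bound fed into the other as you describe.
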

\begin{proof}
The proof is carried by localizing the error contributions coming from the cells residuals $R$ and edge jumps $J_1$ and $J_2$.
For $\tau\in\mesh$ let $\psi_\tau\in H^2_0(\tau)$ be as in \eqref{eq:interiorbubble} and let $\overline{R}$ be a polynomial approximation of $R|_\tau$ by means of an $L^2$-orthogonal projection.
Using the norm-equivalence relation \eqref{eq1:result:lem:LocalizingEstimates} of Lemma
\ref{lem:LocalizingEstimates}
\begin{equation}\label{eq1:lem:EstimatorEfficiency}
\norm{\overline{R}}{L^2(\tau)}^2\leq \cc\int_\tau\overline{R}(\overline{R}\psi_\tau)\leq \cc\norm{\overline{R}}{H^{-2}(\tau)}\norm{\overline{R}\psi_\tau}{H^2(\tau)}.
\end{equation}
From \eqref{eq:inve:lem:ie} and \eqref{eq:interiorbubble}, $\norm{\overline{R}\psi_\tau}{H^2(\tau)}\leq \cc h_\tau^{-2}\norm{\overline{R}}{L^2(\tau)}$ with a constant $\cc>0$ which depends on the polynomial degree $r$ and \eqref{eq1:lem:EstimatorEfficiency} now reads $h_\tau^{2}\norm{\overline{R}}{L^2(\tau)}\leq \cc\norm{\overline{R}}{H^{-2}(\tau)}$.
Then
\begin{equation}\label{eq:interior:lem:EstimatorEfficiency}
\begin{split}
h_\tau^2\norm{R}{L^2(\tau)}&\leq h_\tau^2\norm{\overline{R}}{L^2(\tau)}+h_\tau^2\norm{R-\overline{R}}{L^2(\tau)},\\
&\leq \cc\norm{\overline{R}}{H^{-2}(\tau)}+h_\tau^2\norm{R-\overline{R}}{L^2(\tau)},\\
&\leq \cc\norm{R}{H^{-2}(\tau)}+\cc\norm{R-\overline{R}}{H^{-2}(\tau)}+h_\tau^2\norm{R-\overline{R}}{L^2(\tau)},\\
&\lapprox \cc\norm{\scr{R}}{H^{-2}(\tau)}+h_\tau^2\norm{R-\overline{R}}{L^2(\tau)}.
\end{split}
\end{equation}
Recognizing that $R-\overline{R}=f-\overline{f}$, define $\osc_\mesh(\tau):=h^2_\tau\norm{f-\overline{f}}{L^2(\tau)}$.
We turn our attention to the jump terms across the interior edges.
We begin with the edge residual $J_1$.
Let an edge $\sigma\in\edges_\mesh$ and cells $\tau_1,\tau_2\in\mesh$ be such that
$\sigma\subset\diff\tau_1\cap\diff\tau_2$ and denote $D_\sigma=\overline{\tau_1\cup\tau_2}$.
If $\v\in H^2_0(D_\sigma)$ then
\begin{equation}\label{eq2:lem:EstimatorEfficiency}
\inner{\scr{R},\v}=\int_{D_\sigma}R\v+\int_\sigma J_1\v-\int_\sigma J_2\textstyle\dfrac{\v}{\n_\sigma}.
\end{equation}
Let $\psi_\sigma$ be the bubble function \eqref{eq:edgebubbles} and constantly extend the values of $J_1$ in directions $\pm\n_\sigma$; i.e, into each of $\tau_i$, and set $v_\sigma=\psi_\sigma J_1$.
Then \eqref{eq2:lem:EstimatorEfficiency} reads
\begin{equation}\label{eq3:lem:EstimatorEfficiency}
\cc\norm{J_1}{L^2(\sigma)}^2\leq\int_\sigma\psi_\sigma J_1^2=\inner{\scr{R},\v_\sigma}-\int_{D_\sigma}R\v_\sigma.
\end{equation}
From \eqref{eq:dtrace:lem:ie} and \eqref{eq:edgebubbles} we have the estimates $\norm{\psi_\sigma E_\sigma J_1}{H^2(D_\sigma)}\leq \cinv h_\sigma^{-2}\norm{E_\sigma J_1}{L^2(D_\sigma)}$ and $\norm{\psi_\sigma E_\sigma J_1}{L^2(D_\sigma)}\leq \cc h_\sigma^{1/2}\norm{J_1}{L^2(\sigma)}$
which we apply to \eqref{eq3:lem:EstimatorEfficiency} to obtain
\begin{equation}\label{eq:jump1:lem:EstimatorEfficiency}
\begin{split}
\cc\norm{J_1}{L^2(\sigma)}^2&\leq\left(\cinv h_\sigma^{-2}\norm{\scr{R}}{H^{-2}(D_\sigma)}+\norm{R}{L^2(D_\sigma)}\right)\norm{J_1}{L^2(D_\sigma)},\\
&\leq\cdtrace h_\sigma^{1/2}\left(\cinv h_\sigma^{-2}\norm{\scr{R}}{H^{-2}(D_\sigma)}+\norm{R}{L^2(D_\sigma)}\right)\norm{J_1}{L^2(\sigma)},
\end{split}
\end{equation}
where the the last line follows from \eqref{eq3:result:lem:LocalizingEstimates}.
Now let $\chi_\sigma$ be the function \eqref{eq2:result:lem:LocalizingEstimates}, extend the values of $J_2$ into $D_\sigma$ and set $\w_\sigma=\chi_\sigma J_2$.
We then have
\begin{equation}
\inner{\scr{R},\w_\sigma}=\int_{D_\sigma}R\w_\sigma-h_\sigma^{-1}\int_\sigma\psi_\sigma J_2^2.
\end{equation}
Similarly, we obtain
\begin{equation}\label{eq:jump2:lem:EstimatorEfficiency}
\cc h_\sigma^{-1}\norm{J_2}{L^2(\sigma)}^2\leq\cdtrace h_\sigma^{1/2}\left(\cinv h_\sigma^{-2}\norm{\scr{R}}{H^{-2}(D_\sigma)}+\norm{R}{L^2(D_\sigma)}\right)\norm{J_2}{L^2(\sigma)}.
\end{equation}
We have from \eqref{eq:jump1:lem:EstimatorEfficiency} and
\eqref{eq:jump2:lem:EstimatorEfficiency}
\begin{equation}
\begin{split}
h_\sigma^3\norm{J_1}{L^2(\sigma)}^2+h_\sigma\norm{J_2}{L^2(\sigma)}^2
&\lapprox{\textstyle\frac{\cinv\cdtrace}{\cc}}\norm{\scr{R}}{H^{-2}(D_\sigma)}^2
+{\textstyle\frac{\cdtrace}{\cc}} h_\sigma^4\norm{R}{L^2(D_\sigma)}^2.
\end{split}
\end{equation}
Summing up we have
\begin{equation}
\begin{split}
\eta_\mesh^2(\V,\tau)&=h_\tau^4\norm{R}{L^2(\tau)}^2
+\sum_{\sigma\in\diff\tau}\left(h_\sigma^3\norm{J_1}{L^2(\sigma)}^2+h_\sigma\norm{J_2}{L^2(\sigma)}^2\right),\\
&\leq h_\tau^4\norm{R}{L^2(\tau)}^2+{\textstyle\frac{\cdtrace}{\cc}}\sum_{\sigma\in\edges_\mesh}
\left(\cinv\norm{\scr{R}}{H^{-2}(D_\sigma)}^2+h_\sigma^4\norm{R}{L^2(D_\sigma)}^2\right),\\
&\leq (1+\cshape)h_\tau^4\norm{R}{L^2(\omega_\tau)}^2+{\textstyle\frac{\cinv\cdtrace}{\cc}}\sum_{\sigma\in\diff\tau}\norm{\scr{R}}{H^{-2}(D_\sigma)}^2,\\
\end{split}
\end{equation}
we arrive at
\begin{equation}
\begin{split}
\eta_\mesh^2(\V,\tau)&\lapprox(1+\cshape)\left(\cc\norm{\scr{R}}{H^{-2}(\tau)}^2+\osc^2_\mesh(\omega_\tau)\right)
+{\textstyle\frac{\cinv\cdtrace}{\cc}}\sum_{\sigma\in\diff\tau}\norm{\scr{R}}{H^{-2}(D_\sigma)}^2.
\end{split}
\end{equation}
Note that
\begin{equation}
\begin{split}
\sum_{\sigma\in\edges_\mesh}\norm{\scr{R}}{H^{-2}(D_\sigma)}^2
&\leq\Ccont\sum_{\sigma\in\edges_\mesh}\norm{\u-\U}{H^2(D_\sigma)}^2
\leq\cshape\Ccont\norm{\u-\U}{H^2(\Omega)}^2
\end{split}
\end{equation}
\end{proof}

\subsection{Discrete upper bound}
Here we show that the estimator is capable of local quantification of the difference between two consequetive discrete spline solutions.
\begin{lemma}[Estimator discrete reliability]\label{lem:dre}
Let $\mesh$ be a partition of $\Omega$ satisfying conditions \eqref{eq:sr} and let $\mesh_\ast=\ms{REFINE}\,[\mesh,R]$ for some refined set $R\subseteq\mesh$.
If $\U$ and $\U_\ast$ are the respective solutions to \eqref{eq:dcp} on $\mesh$ and $\mesh_\ast$, then for a constants $ C_\rm{dRel,1}, C_\rm{dRel,2}>0$, depending only on $\cshape$,
\begin{equation}\label{eq:result:lem:dre}
\begin{split}
\enorm{\U_\ast-\U}{}^2&\leq C_\rm{dRel,1}\eta^2_\mesh(\U,\omega_{R_{\mesh\to\mesh_\ast}})
\end{split}
\end{equation}
where $\omega_{R_{\mesh\to\mesh_\ast}}$ is understood as the union of support extensions of refined cells from $\mesh$ to obtain $\mesh_\ast$.
\end{lemma}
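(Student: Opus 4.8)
The plan is to run the by-now-standard Galerkin-orthogonality argument for discrete reliability (cf.\ \cite{cascon2008quasi,siebert2010converg,stevenson2005optimal}), the one genuinely new ingredient being a localization of the residual that exploits the locality and the reproduction property of the quasi-interpolant $\Ip$ of Theorem~\ref{thm:quasiinterpolant}.

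First I would set $e:=\U_\ast-\U$. Since $\mesh_\ast$ is a refinement of $\mesh$, the spline spaces are nested, $\Xp\subseteq\Xpp\subseteq H^2_0(\Omega)$, so $e\in\Xpp$; using \eqref{eq:cgo} on $\mesh_\ast$ with test function $e$ and then the definition \eqref{eq:residual} of $\scr{R}$,
\[
\enorm{e}{}^2=\a(\U_\ast-\U,e)=\a(\u-\U,e)-\a(\u-\U_\ast,e)=\a(\u-\U,e)=\inner{\scr{R},e},
\]
the third term vanishing because $e\in\Xpp$. Then, since $\Ip e\in\Xp$, \eqref{eq:cgo} on $\mesh$ gives $\inner{\scr{R},\Ip e}=\a(\u-\U,\Ip e)=0$, hence $\enorm{e}{}^2=\inner{\scr{R},e-\Ip e}$; and as $\Ip$ is a projection onto $\Xp$ with $\U\in\Xp$, I would rewrite $e-\Ip e=\U_\ast-\Ip\U_\ast$.

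The core of the proof is the localization claim $\supp(e-\Ip e)\subseteq\omega_{R_{\mesh\to\mesh_\ast}}$, where $\omega_{R_{\mesh\to\mesh_\ast}}=\bigcup_{\tau'\in R}\omega_{\tau'}$. I would argue that $(\Ip\U_\ast)|_\tau$ depends only on $\U_\ast|_{\omega_\tau}$ --- through the dual functionals $\lambda_\beta$ of \eqref{eq:LevelWiseInterpolants}, whose supports lie in $\omega_\tau$ --- and that whenever $\omega_\tau$ contains no refined cell, all its cells belong to $\mesh\cap\mesh_\ast$, so $\U_\ast|_{\omega_\tau}$ already lies in the coarse local THB-space, which $\Ip$ reproduces; thus $(\U_\ast-\Ip\U_\ast)|_\tau=0$ for every such $\tau$. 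Using the symmetry $\tau'\in\omega_\tau\iff\tau\in\omega_{\tau'}$, the cells on which $e-\Ip e$ can be nonzero are exactly $\{\tau\in\mesh:\tau\subseteq\omega_{R_{\mesh\to\mesh_\ast}}\}$. The hard part will be making this rigorous --- in particular the claim that the THB quasi-interpolant attached to $\mesh$ reproduces the \emph{finer} spline $\U_\ast$ on any patch of cells untouched by the refinement, which rests on the truncation construction, the nestedness of the level spaces, and local linear independence of the THB basis; this is the point where the hierarchical-spline machinery genuinely enters, whereas in the classical Lagrange setting the analogous localization is essentially immediate.

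Finally I would insert the residual identity established in the proof of Lemma~\ref{lem:er},
\[
\inner{\scr{R},\v}=\sum_{\tau\in\mesh}\int_\tau R\,\v+\sum_{\sigma\in\edges_\mesh}\Big(\int_\sigma J_1\,\v-\int_\sigma J_2\,{\textstyle\dfrac{\v}{\n_\sigma}}\Big),\qquad\v\in H^2_0(\Omega),
\]
with $R=f-\Op\U$ and $J_1,J_2$ the edge jumps as there, and evaluate it at $\v=e-\Ip e$; by the localization every cell- or edge-term supported outside $\omega_{R_{\mesh\to\mesh_\ast}}$ drops out. Bounding the surviving terms by Cauchy--Schwarz together with the approximation estimates of Theorem~\ref{thm:quasiinterpolant} ($t=0$ on cells, $t=0,1$ on edges, admissible since $e\in H^2(\omega_\tau)$), then the finite-intersection property \eqref{eq:sr} to get $\sum_{\tau\subseteq\omega_{R_{\mesh\to\mesh_\ast}}}\norm{e}{H^2(\omega_\tau)}^2\lapprox\cshape\norm{e}{H^2(\Omega)}^2$, and Poincar\'e's inequality $\norm{e}{H^2(\Omega)}\leq\cpoin\enorm{e}{}$ on $H^2_0(\Omega)$, would give
\[
\enorm{e}{}^2\lapprox\cproj\,\cshape\,\cpoin\,\eta_\mesh(\U,\omega_{R_{\mesh\to\mesh_\ast}})\,\enorm{e}{}.
\]
Dividing by $\enorm{e}{}$ and squaring then yields \eqref{eq:result:lem:dre}, with $C_\rm{dRel,1}$ depending only on $\cshape$ (through $\cproj$, $\cshape$ and $\cpoin$); the edge contributions of cells in $\omega_{R_{\mesh\to\mesh_\ast}}$ are absorbed into $\eta_\mesh^2(\U,\omega_{R_{\mesh\to\mesh_\ast}})$ up to the finite-overlap constant.
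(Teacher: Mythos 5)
Your proposal is correct in outline and follows the same skeleton as the paper's proof: use Galerkin orthogonality on both meshes to write $\enorm{\U_\ast-\U}{}^2=\inner{\scr{R},e_\ast-\V}$ with a coarse comparison function $\V\in\Xp$ agreeing with $e_\ast=\U_\ast-\U$ away from the refined cells, then run the Lemma~\ref{lem:er}-type localized residual estimate (Cauchy--Schwarz, quasi-interpolation, finite intersection) over the surviving cells and edges, and finish with $\norm{e_\ast}{H^2(\Omega)}\lapprox\enorm{e_\ast}{}$. The only genuine difference is the localization device. The paper builds $\V$ explicitly: it takes the connected components $\Omega_i$ of $\Omega_\ast=\cup_{\tau\in R_{\mesh\to\mesh_\ast}}\overline{\tau}$, sets $\V=e_\ast$ on $\Omega\setminus\Omega_\ast$ and $\V=I_ie_\ast$ on each $\Omega_i$, so that $e_\ast-\V$ vanishes identically outside the refined region by construction; the price is the unverified claim that this glued function actually belongs to $\Xp$ (i.e.\ that $e_\ast$ restricted to the unrefined region is a coarse spline and that the patching is $H^2$-conforming across $\diff\Omega_i$). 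You instead take the global quasi-interpolant $\V=\Ip e_\ast$ and argue $\supp(e_\ast-\Ip e_\ast)\subseteq\omega_{R_{\mesh\to\mesh_\ast}}$ via locality of the dual functionals plus local reproduction of the finer spline on patches $\omega_\tau$ containing no refined cell; this spares you the component-wise spaces and the conformity-of-gluing issue, but shifts the burden onto the local-reproduction property of the THB quasi-interpolant, which you correctly flag as the hard step and do not prove. Note that this is the same underlying hierarchical-spline fact the paper leaves implicit (its assertion that $\V\in\Xp$), so your argument is at the same level of rigor as the paper's; both routes land on the identical bound $\enorm{\U_\ast-\U}{}^2\lapprox\eta_\mesh(\U,\omega_{R_{\mesh\to\mesh_\ast}})\norm{e_\ast}{H^2(\Omega)}$ and hence on \eqref{eq:result:lem:dre}.
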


\begin{proof}
Let $\e_\ast=\U_\ast-\U$. First note that if $\V\in\Xp$ then in view of the nesting $\Xp\subset\Xpp$, %
\begin{equation}
\a(\U_\ast-\U,\e_\ast)=\a(\U_\ast-\U,\e_\ast-\V).
\end{equation}
%
To localize, we form disconnected subdomains $\Omega_i\subseteq\Omega$, $i\in J$, each formed from the interiors of connected components of $\Omega_\ast=\cup_{\tau\in R_{\mesh\to\mesh_\ast}}\overline{\tau}$.
Then to each subdomain $\Omega_i$ we form a partition $\mesh_i=\{\face\in\mesh:\face\subset\Omega_i\}$, interior edges $\edges_i=\{\sigma\in\edges_\mesh:\sigma\subset\diff\tau,\ \tau\in\mesh_i\}$, and a corresponding finite-element space $\X_i$. Let $I_i:H^2(\Omega_i)\to\X_i$.
%
Let $\V\in\Xp$ be an approximation of $\e_\ast$ be given by
\begin{equation}
\V=\e_\ast\ds{1}_{\Omega\backslash\Omega_\ast}+\sum_{i\in J}(I_i\e_\ast)\cdot\ds{1}_{\Omega_i}.
\end{equation}
%
Then $e_\ast-\V\equiv0$ on $\Omega\backslash\Omega_\ast$ and performing  integration by parts will yield
\begin{equation}
\begin{split}
\a(\U_\ast-\U,\e_\ast-\V)=&\sum_{i\in J}
\bigg[\sum_{\tau\in\mesh_i}\inner{R,\e_\ast-\Ip\e_\ast}_\tau
+\sum_{\sigma\in\edges_i}\left\{\inner{J_1,\e_\ast-I_i\e_\ast}_\sigma+\inner{J_2,\e_\ast-I_i\e_\ast}_\sigma\right\}\bigg],
\end{split}
\end{equation}
%
Following the same procedure carried in Lemma \ref{lem:er} we have
\begin{equation}
\begin{split}
\sum_{\tau\in\mesh_i}\inner{R,\e_\ast-I_i\e_\ast}_\tau
&+\sum_{\sigma\in\edges_i}\left\{\inner{J_1,\e_\ast-I_i\e_\ast}_\sigma
+\inner{J_2,\e_\ast-I_i\e_\ast}_\sigma\right\}\\
&\leq\cproj\bigg(\sum_{\tau\in\mesh_i}\eta^2_\mesh(\U,\tau)\bigg)^{1/2}
\bigg(\sum_{\tau\in\mesh_i}\norm{\e_\ast}{H^2(\omega_\tau)}^2\bigg)^{1/2},
\end{split}
\end{equation}
Set $\omega_{R_{\mesh\to\mesh_\ast}}=\cup\{\omega_\tau:\tau\in R_{\mesh\to\mesh_\ast}\}$.
%
We therefore have
\begin{equation}
\begin{split}
\enorm{\U_\ast-\U}{}^2&\leq\Cdrel\eta_\mesh(\U,\omega_{R_{\mesh\to\mesh_\ast}})\norm{\e_\ast}{H^2(\Omega)}
\end{split}
\end{equation}
\end{proof}

\section{Convergence} 		
In section we show that the derived computable estimator \eqref{eq:estimator} when used to direct refinement will result in decreased error.
This will hinge on the estimator Lipschitz property of Lemma \ref{lem:elp}.
To show that procedure \eqref{eq:afem} exhibits convergence we must be able to relate the errors of consecutive discrete solutions.
The symmetry of the bilinear form, consistency of the formulation and  finite-element spline space nesting will readily provide that via Galerkin Pythagoras in Lemma \ref{lem:gp}.
%
%
\subsection{Error reduction} 
\begin{lemma}[Estimator Lipschitz property]\label{lem:elp}
Let $\mesh$ be a partition of $\Omega$ satisfying conditions \eqref{eq:sr}.
There exists a constant $\Clip>0$, depending only $\cshape$, such that for any cell $\face\in\mesh$ we have
\begin{equation}\label{eq:result:lem:elp}
|\eta_\mesh (\V,\face)-\eta_\mesh (\W,\face)|
\leq\Clip\semi{\V-\W}{H^2(\omega_\face)},
\end{equation}
holding for every pair of finite-element splines $\V$ and $\W$ in $\X_\mesh$.
\end{lemma}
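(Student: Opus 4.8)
The plan is to apply the reverse triangle inequality to the three square-root building blocks of the indicator $\eta_\mesh(\cdot,\tau)$ and estimate each difference term-by-term using linearity of the residual in its spline argument together with the inverse and discrete trace estimates of Lemma~\ref{lem:ie}. Write $\eta_\mesh(\V,\tau)$ as the $\ell^2$-norm of the vector whose components are $h_\tau^2\norm{f-\Op\V}{L^2(\tau)}$, $h_\sigma^{3/2}\norm{\jump{\diff_{\n_\sigma}\Lap\V}{\sigma}}{L^2(\sigma)}$, and $h_\sigma^{1/2}\norm{\jump{\Lap\V}{\sigma}}{L^2(\sigma)}$ for $\sigma\subset\diff\tau$. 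Since $\bb{R}^N\ni x\mapsto|x|_{\ell^2}$ is $1$-Lipschitz and each component is itself of the form $w\mapsto\norm{Lw}{}$ for a linear operator $L$, the reverse triangle inequality gives
\[
|\eta_\mesh(\V,\tau)-\eta_\mesh(\W,\tau)|\leq
h_\tau^2\norm{\Op(\V-\W)}{L^2(\tau)}
+\sum_{\sigma\subset\diff\tau}\Big(h_\sigma^{3/2}\norm{\jump{\textstyle\diff_{\n_\sigma}\Lap(\V-\W)}{\sigma}}{L^2(\sigma)}
+h_\sigma^{1/2}\norm{\jump{\Lap(\V-\W)}{\sigma}}{L^2(\sigma)}\Big),
\]
where the $f$-term cancels because it is common to both. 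Setting $e=\V-\W\in\bb{P}_r$ on each cell, it remains to bound each piece by $\semi{e}{H^2(\omega_\tau)}$.

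For the volume term, $\Op e=\Lap^2 e$ is a polynomial on $\tau$ and the inverse estimate \eqref{eq:inve:lem:ie} applied with $s=2$, $t=4$ gives $\norm{\Lap^2 e}{L^2(\tau)}\leq\cinv h_\tau^{-2}\semi{e}{H^2(\tau)}$, so $h_\tau^2\norm{\Op e}{L^2(\tau)}\lapprox\semi{e}{H^2(\tau)}$. For the jump terms, on each cell $\tau'\supset\sigma$ the quantities $\diff_{\n_\sigma}\Lap e$ and $\Lap e$ are polynomials; apply the discrete trace estimate \eqref{eq:dtrace:lem:ie} to pass from $L^2(\sigma)$ to $L^2(\tau')$, picking up $h_\sigma^{-1/2}$, and then the inverse estimate \eqref{eq:inve:lem:ie} to trade derivatives for powers of $h$: $\norm{\diff_{\n_\sigma}\Lap e}{L^2(\tau')}\leq\semi{e}{H^3(\tau')}\leq\cinv h_{\tau'}^{-1}\semi{e}{H^2(\tau')}$ and $\norm{\Lap e}{L^2(\tau')}\leq\semi{e}{H^2(\tau')}$. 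Combining, $h_\sigma^{3/2}\norm{\jump{\diff_{\n_\sigma}\Lap e}{\sigma}}{L^2(\sigma)}\lapprox h_\sigma^{3/2}h_\sigma^{-1/2}h_{\tau'}^{-1}\semi{e}{H^2(\omega_\sigma)}\lapprox\semi{e}{H^2(\omega_\tau)}$, using graded-ness \eqref{eq:sr} to replace $h_{\tau'}$ by $h_\sigma$ up to $\cshape$, and similarly $h_\sigma^{1/2}\norm{\jump{\Lap e}{\sigma}}{L^2(\sigma)}\lapprox h_\sigma^{1/2}h_\sigma^{-1/2}\semi{e}{H^2(\omega_\tau)}=\semi{e}{H^2(\omega_\tau)}$. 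Summing over the at most four edges $\sigma\subset\diff\tau$ and over the two cells sharing each edge — all contained in $\omega_\tau$ by definition, with multiplicity bounded by the finite-intersection property \eqref{eq:sr} — and absorbing every constant into a single $\Clip=\Clip(\cshape)$ yields \eqref{eq:result:lem:elp}.

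The routine parts are the bookkeeping of $h$-powers and the passage between reference and physical cells hidden inside Lemma~\ref{lem:ie}; the only genuine subtlety is making sure the mesh-size weights on a shared edge and on its two abutting cells, possibly of different levels, are mutually comparable — this is exactly what the graded property in \eqref{eq:sr} buys us, so no interior-node-type argument is needed. I expect the bound on the normal-derivative jump term to be the most delicate, since it is the one requiring both a trace inequality and an inverse inequality (of order three minus two) together with the $h_\sigma\sim h_{\tau'}$ comparison, whereas the other two terms use at most one of these tools.
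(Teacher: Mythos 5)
Your proposal is correct and follows essentially the same route as the paper's proof: the $f$-term cancels, the reverse triangle inequality reduces everything to the differences $\V-\W$, the volume term is handled by the inverse estimate \eqref{eq:inve:lem:ie} with $s=2$, $t=4$, and the jump terms by the discrete trace estimate \eqref{eq:dtrace:lem:ie} (plus one more inverse estimate for the normal-derivative jump), with the contributions of the two abutting cells collected into $\omega_\face$ via the shape-regularity conditions \eqref{eq:sr}. Your explicit $\ell^2$-Lipschitz handling of the square-root structure of $\eta_\mesh(\cdot,\face)$ is in fact a slightly more careful rendering of the paper's opening display, which states that reduction as an equality rather than an inequality.
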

\begin{proof}
Let $\V$ and $\W$ be finite-element splines in $\X_\mesh$ and let $\face$ be a cell in partition $\mesh$.
\begin{equation}\label{eq1:lem:elp}
\begin{split}
\eta_\mesh(\V,\face)-\eta_\mesh(\W,\face)&=
h_\face^2\left(\norm{\eff-\Op\V}{L^2(\face)}
-\norm{\eff-\Op\W}{L^2(\face)}\right)\\
&+\sum_{\sigma\subset\diff\tau}h_\sigma^{1/2}\left(
\norm{\jump{\Lap\V}{\sigma}}{L^2(\sigma)}
-\norm{\jump{\Lap\W}{\sigma}}{L^2(\sigma)}
\right)\\
&+\sum_{\sigma\subset\diff\tau}h_\sigma^{3/2}\left(
\norm{\jump{\diffe\Lap\V}{\sigma}}{L^2(\edge)}
-\norm{\jump{\diffe\Lap\W}{\sigma}}{L^2(\edge)}
\right).
\end{split}
\end{equation}
Treating the interior term,
\begin{equation}
\begin{split}
\norm{\eff-\Op\V}{L^2(\face)}-\norm{\eff-\Op\W}{L^2(\face)}
&\leq\semi{\V-\W}{H^4(\face)}
\leq\cinv h^{-2}_\face\semi{\V-\W}{H^2(\face)}.
\end{split}
\end{equation}
Treating the edge terms we have
\begin{equation}
\norm{\jump{\Lap\V}{\sigma}}{L^2(\sigma)}
-\norm{\jump{\Lap\W}{\sigma}}{L^2(\sigma)}
\leq\norm{\jump{\Lap\V-\Lap\W}{\sigma}}{L^2(\sigma)}.
\end{equation}
Let $\face'$ from $\mesh$ be a cell that shares the edge $\sigma$, i.e $\tau'$ is an adjacent cell to $\face$.
For any finite-element spline $\V\in\Xp$ we have
\begin{equation}
\norm{\jump{\V}{\edge}}{\edge}
\leq\cdtrace\left(
h_\sigma^{-1/2}\norm{\V}{\face}+h_\sigma^{-1/2}\norm{\V}{\face'}
\right)
\leq\cdtrace h_\sigma^{-1/2}\norm{\V}{\omega_\face}.
\end{equation}
Replacing $\V$ with $\Lap\V-\Lap\W$ gives
\begin{equation}
h_\sigma^{1/2}\norm{\jump{\Lap\V-\Lap\W}{\sigma}}{\sigma}
\leq\cinv\cdtrace\semi{\V-\W}{H^2(\omega_\face)}.
\end{equation}
Similarly, we have
\begin{equation}
h_\sigma^{3/2}\left(
\norm{\jump{\diffe\Lap\V}{\sigma}}{L^2(\edge)}
-\norm{\jump{\diffe\Lap\W}{\sigma}}{L^2(\edge)}
\right)
\leq\cinv\cdtrace\semi{\V-\W}{H^2(\omega_\face)}.
\end{equation}
It then follows from \eqref{eq1:lem:elp}
\begin{equation}
\begin{split}
|\eta_\mesh(\V,\tau)-\eta_\mesh(\W,\tau)|&\leq
\cinv(\semi{\V-\W}{H^2(\face)}+2\cdtrace\semi{\V-\W}{H^2(\omega_\face)}),\\
&\leq\cinv(1+2\cdtrace)\semi{\V-\W}{H^2(\omega_\face)}.
\end{split}
\end{equation}
\end{proof}

\begin{lemma}[Estimator error reduction]\label{lem:err}
Let $\mesh$ be a partition of $\Omega$ satisfying conditions \eqref{eq:sr},
let $\marked\subseteq\mesh$
and let $\mesh_\ast=\mathbf{REFINE}\,[\mesh,\marked]$.
There exists constants $\lambda\in(0,1)$ and $\Cest>0$, depending only on $\cshape$,
such that for any $\delta>0$ it holds that for any pair of finite-element splines $\V\in\X_\mesh$ and $\V_\ast\in\X_{\mesh_\ast}$
we have
\begin{equation}\label{eq:result:lem:err}
\eta_{\mesh_\ast}^2(\V_\ast,\Omega)
\leq(1+\delta)\left\{\eta_\mesh^2(\V,\Omega)-{\textstyle\frac{1}{2}}\eta_\mesh^2(\V,\marked)\right\}+\cshape(1+{\textstyle\frac{1}{\delta}})\enorm{\V-\V_\ast}{}^2.
\end{equation}
\end{lemma}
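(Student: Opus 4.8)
The plan is to combine two ingredients: an \emph{estimator reduction for a fixed function}, which uses only the geometry of dyadic refinement, and the Lipschitz bound of Lemma~\ref{lem:elp}, which lets us replace the fixed function $\V$ by $\V_\ast$ at the cost of a term proportional to $\enorm{\V-\V_\ast}{}^2$. Throughout I would use the nesting $\Xp\subset\Xpp$, so that $\V\in\Xp$ is also an element of $\Xpp$ and the quantity $\eta_{\mesh_\ast}(\V,\cdot)$ is well defined.

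\emph{Step 1 (fixed function).} I would first prove that for the fixed spline $\V$,
\[
\eta_{\mesh_\ast}^2(\V,\Omega)\leq\eta_\mesh^2(\V,\Omega)-\tfrac12\,\eta_\mesh^2(\V,\marked),
\]
by a cell-by-cell comparison. Write $\mesh_\ast$ as the disjoint union of the cells that survive from $\mesh$, namely $\mesh\cap\mesh_\ast$, and the cells $\face'$ contained in some refined cell $\face\in\mesh\setminus\mesh_\ast$. For a surviving cell $\face'$, the interior residual $\eff-\Op\V$, the jumps of $\Lap\V$ and of its normal derivative, and $h_{\face'}$ are all unchanged, while every edge of $\face'$ in $\mesh_\ast$ is contained in an edge of $\face'$ in $\mesh$ of at least twice the length; grouping the $\mesh_\ast$-edges by their $\mesh$-parents and using additivity of the $L^2$-norm of a fixed jump over a subdivided edge gives $\eta_{\mesh_\ast}^2(\V,\face')\leq\eta_\mesh^2(\V,\face')$. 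For a cell $\face'\subseteq\face$ with $\face$ refined, the edges of $\face'$ interior to $\face$ carry vanishing jumps because $\V|_\face$ is a single polynomial; the edges of $\face'$ lying on $\diff\face$ have length at most $h_\sigma/2$ for the corresponding parent edge $\sigma$, and $h_{\face'}\leq h_\face/2$. Reading off the powers $h^4,h^3,h$ attached to the interior term and the two jump terms in \eqref{eq:indicator} produces reduction factors $\tfrac1{16},\tfrac18,\tfrac12$, so $\sum_{\face'\subseteq\face}\eta_{\mesh_\ast}^2(\V,\face')\leq\tfrac12\,\eta_\mesh^2(\V,\face)$; extra refinement forced by admissibility only sharpens these bounds. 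Since every marked cell is dyadically refined, $\marked\subseteq\mesh\setminus\mesh_\ast$, and summing the two cases over $\mesh_\ast$ gives the displayed inequality.

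\emph{Step 2 (Lipschitz trade-off).} Next I would apply Lemma~\ref{lem:elp} on the mesh $\mesh_\ast$: for each $\face'\in\mesh_\ast$, $\eta_{\mesh_\ast}(\V_\ast,\face')\leq\eta_{\mesh_\ast}(\V,\face')+\Clip\,\semi{\V-\V_\ast}{H^2(\omega_{\face'})}$. Squaring, using Young's inequality with parameter $\delta>0$, summing over $\face'\in\mesh_\ast$, and bounding $\sum_{\face'}\semi{\V-\V_\ast}{H^2(\omega_{\face'})}^2\leq\cshape\,\semi{\V-\V_\ast}{H^2(\Omega)}^2$ by the finite-intersection property in \eqref{eq:sr}, I obtain
\[
\eta_{\mesh_\ast}^2(\V_\ast,\Omega)\leq(1+\delta)\,\eta_{\mesh_\ast}^2(\V,\Omega)+\cshape\Clip^2\,(1+\tfrac1\delta)\,\semi{\V-\V_\ast}{H^2(\Omega)}^2.
\]
Since $\V-\V_\ast\in\Ho(\Omega)$ one has $\semi{\V-\V_\ast}{H^2(\Omega)}=\norm{\Lap(\V-\V_\ast)}{L^2(\Omega)}=\enorm{\V-\V_\ast}{}$. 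Substituting the bound from Step~1 for $\eta_{\mesh_\ast}^2(\V,\Omega)$ and absorbing $\Clip^2$ (which depends only on $\cshape$ by Lemma~\ref{lem:elp}) into the constant gives exactly \eqref{eq:result:lem:err} with $\lambda=\tfrac12$.

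The main obstacle is Step~1: one must check that the estimator genuinely decreases on a refined cell even though its three contributions carry different powers of $h$ --- the term $h_\sigma\norm{\jump{\Lap\V}{\sigma}}{L^2(\sigma)}^2$ only gains the factor $\tfrac12$, which is why $\lambda=\tfrac12$ is the bottleneck --- and that the newly created interior edges contribute nothing (this rests on $\V$ being a single polynomial on each parent cell) and that subdividing an edge of an unrefined neighbour never increases its indicator. The admissible hierarchical structure, including hanging nodes, causes no difficulty here, since the only facts used are $h_{\text{child}}\leq\tfrac12 h_{\text{parent}}$ and the additivity of $L^2$-norms over a subdivided edge. Step~2 is routine.
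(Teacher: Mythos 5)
Your proposal is correct and follows essentially the same route as the paper: the Lipschitz property of Lemma~\ref{lem:elp} combined with Young's inequality and the finite-intersection property, plus the fixed-function reduction where new interior edges carry no jumps, children satisfy $h_{\mathrm{child}}\leq\tfrac12 h_{\mathrm{parent}}$, and unrefined cells enjoy estimator monotonicity, yielding the factor $\tfrac12$ on marked cells. You merely present the two steps in the opposite order and are in fact somewhat more careful than the paper in tracking the jump contributions on subdivided and parent-boundary edges, where the $h_\sigma$-weighted term is correctly identified as the bottleneck giving the factor $\tfrac12$.
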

\begin{proof}
Let $\marked\subseteq\mesh$ be a set of marked elements from partition $\mesh$
and let $\mesh_\ast=\mathbf{REFINE}\,[\mesh,\marked]$.
For notational simplicity we denote $\bb{X}_{\mesh_\ast}$ and $\eta_{\mesh_\ast}$ by $\X_\ast$ and $\eta_\ast$, respectively.
Let $\V$ and $\V_\ast$ be the respective finite-element splines from $\Xp$ and $\X_\ast$.
Let $\face$ be a cell from partition $\mesh_\ast$.
In view of the Lipschitz property of the estimator (Lemma \ref{lem:elp})
and the nesting $\Xp\subseteq\X_\ast$,
\begin{equation}
\eta_{\ast}^2(\V_\ast,\face)
\lapprox\eta_{\ast}^2(\V,\face)+\semi{\V-\V_\ast}{H^2(\omega_\face)}^2
+2\eta_{\ast}(\V_\ast,\face)\semi{\V-\V_\ast}{H^2(\omega_\face)}.
\end{equation}
Given any $\delta>0$, an application of Young's inequality on the last term gives
\begin{equation}
2\eta_{\ast}(\V_\ast,\face)\semi{\V-\V_\ast}{H^2(\omega_\face)}
\leq\delta\eta_{\ast}^2(\V_\ast,\face)+{\textstyle\frac{1}{\delta}}\semi{\V-\V_\ast}{H^2(\omega_\face)}^2.
\end{equation}
We now have
\begin{equation}
\eta_{\ast}^2(\V_\ast,\face)
\lapprox(1+\delta)\eta_{\ast}^2(\V,\face)
+(1+{\textstyle\frac{1}{\delta}})\semi{\V-\V_\ast}{H^2(\omega_\face)}^2.
\end{equation}
Recalling that the partition cell are disjoint with uniformly bounded support extensions, we may sum over all the cells $\face\in\mesh_\ast$ to obtain
\begin{equation}
\eta_\ast^2(\V_\ast,\mesh_\ast)\leq(1+\delta)\eta_\ast^2(\V,\mesh_\ast)+\cshape(1+{\textstyle\frac{1}{\delta}})\enorm{\V-\V_\ast}{}^2.
\end{equation}
It remains to estimate $\eta_\ast^2(\V,\mesh_\ast)$. Let $|\marked|$ be the sum areas of all cells in $\marked$.
For every marked element $\face\in\marked$ define $\mesh_{\ast,\marked}=\{\rm{child}(\face):\face\in\marked\}$.
Let $b>0$ denote the number of bisections required to obtain the conforming partition $\mesh_\ast$ from $\mesh$.
Let $\face_\ast$ be a child of a cell $\face\in\marked$.
Then $h_{\face_\ast}\leq2^{-1}h_\face$.
Noting that $\V\in\Xp$ we have no jumps within $\face$
\begin{equation}
\eta_\ast^2(\V,\tau_\ast)=h_{\tau_\ast}^4\norm{f-\Op\V}{\tau_\ast}^2\leq(2^{-1}h_\tau)^4\norm{f-\Op\V}{\tau_\ast}^2,
\end{equation}
summing over all children
\begin{equation}
\sum_{\tau_\ast\in\rm{children}(\tau)}\eta_\ast^2(\V,\tau_\ast)
\leq2^{-1}\eta_\mesh^2(\V,\tau),
\end{equation}
and we obtain by disjointness of partitions an estimate on the error reduction
\begin{equation}
\sum_{\face_\ast\in\mesh_{\ast,M}}\eta_{\ast}^2(\V,\face_\ast)\leq2^{-1}\est_\mesh^2(\V,\marked).
\end{equation}
For the remaining cells $T\in\mesh\backslash\marked$, the estimator monotonicity implies $\eta_{\mesh_\ast}(\V,T)\leq\eta_\mesh(\V,T)$.
Decompose the partition $\mesh$ as union of marked cells in $\marked$ and their complement $\mesh\backslash\marked$ to conclude the total error reduction obtained by $\ms{REFINE}$ and the choice of Dorfler parameter $\theta$
\begin{equation}
\eta_{\mesh_\ast}^2(\V,\Omega)\leq\estP^2(\V,\Omega\backslash\marked)+2^{-1}\estP^2(\V,\marked)
=\estP^2(\V,\Omega)-{\textstyle\frac{1}{2}}\eta^2_\mesh(\V,\marked).
\end{equation}
\end{proof}

\begin{lemma}[Galerkin Pythaguras]\label{lem:gp}
Let $\mesh$ and $\mesh_\ast$ be partitions of $\Omega$ satisfying conditions \eqref{eq:sr} with $\mesh_\ast\ge\mesh$
and let $\U\in\Xp$ and $\U_\ast\in\X_{\mesh_\ast}$ be the spline solutions to \eqref{eq:dcp}.
Then
\begin{equation}\label{eq:result:lem:gp}
\enorm{\u-\U_\ast}{}^2=\enorm{\u-\U_\ast}{}^2-\enorm{\U_\ast-\U}{}^2.
\end{equation}
\end{lemma}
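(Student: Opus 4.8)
The statement to prove is the Galerkin-Pythagoras identity (Lemma~\ref{lem:gp}), although as printed its right-hand side contains an obvious typo: the intended identity is
\[
\enorm{\u-\U}{}^2=\enorm{\u-\U_\ast}{}^2+\enorm{\U_\ast-\U}{}^2.
\]
The plan is the classical three-line argument exploiting nesting of the discrete spaces together with Galerkin orthogonality. First I would write $\u-\U=(\u-\U_\ast)+(\U_\ast-\U)$ and expand the energy norm using the bilinear form:
\[
\enorm{\u-\U}{}^2=\enorm{\u-\U_\ast}{}^2+2\a(\u-\U_\ast,\U_\ast-\U)+\enorm{\U_\ast-\U}{}^2,
\]
which uses only that $\a$ is symmetric and that $\enorm{\cdot}{}^2=\a(\cdot,\cdot)$.

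The key step is to show the cross term vanishes. Since $\mesh_\ast\ge\mesh$, we have the space nesting $\Xp\subseteq\X_{\mesh_\ast}$, so in particular $\U\in\X_{\mesh_\ast}$ and $\U_\ast\in\X_{\mesh_\ast}$, whence $\U_\ast-\U\in\X_{\mesh_\ast}$. Applying the Galerkin orthogonality \eqref{eq:cgo} at the finer level --- that is, $\a(\u-\U_\ast,\V)=0$ for all $\V\in\X_{\mesh_\ast}$ --- with the test function $\V=\U_\ast-\U$ gives $\a(\u-\U_\ast,\U_\ast-\U)=0$. Substituting back yields the claimed identity, and rearranging gives the form $\enorm{\u-\U_\ast}{}^2=\enorm{\u-\U}{}^2-\enorm{\U_\ast-\U}{}^2$ (which is what the displayed equation is meant to say once the first $\U_\ast$ is corrected to $\U$).

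There is no real obstacle here; the only thing to be careful about is making sure the Galerkin orthogonality is invoked at the correct (finer) level and that the nesting hypothesis $\mesh_\ast\ge\mesh$ is what licenses $\U_\ast-\U$ as an admissible test function in $\X_{\mesh_\ast}$. One could also phrase it via the ``optimal approximation'' viewpoint already noted after \eqref{eq:dcp}: $\U_\ast$ is the $\a$-orthogonal projection of $\u$ onto $\X_{\mesh_\ast}$ and $\U$ lies in that subspace, so the Pythagorean theorem for orthogonal projections in the energy inner product applies directly. I would present the short expansion-plus-orthogonality version since it is self-contained and needs only \eqref{eq:cgo} and symmetry of $\a$.
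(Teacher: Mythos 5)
Your proof is correct and follows essentially the same route as the paper: expand the energy norm of the decomposed error and kill the cross terms via Galerkin orthogonality at the finer level, which is licensed by the nesting $\Xp\subseteq\X_{\mesh_\ast}$. You are also right that the displayed identity \eqref{eq:result:lem:gp} contains a typo and should read $\enorm{\u-\U_\ast}{}^2=\enorm{\u-\U}{}^2-\enorm{\U_\ast-\U}{}^2$, which is exactly what the paper's own computation yields.
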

\begin{proof}
At first we express
\begin{equation}\label{eq1:lem:gp}
\begin{split}
\a(\u-\U_\ast,\u-\U_\ast)&=
\a(\u-\U,\u-\U)-\a(\upp-\up,\upp-\up)\\
&+\a(\up-\upp,\u-\upp)+\a(\u-\U_\ast,\U-\U_\ast).
\end{split}
\end{equation}
Recognizing that $\a(\u-\U_\ast,\U-\U_\ast)=\a(\U-\U_\ast,\u-\U)=0$, we arrive at
\begin{equation}
\a(\u-\U_\ast,\u-\U_\ast)=\a(\u-\U,\u-\U)-\a(\upp-\up,\upp-\up).
\end{equation}
\end{proof}

\begin{theorem}[Convergence of conforming AFEM]\label{thm:ConvConf}
For a contractive factor $\alpha\in(0,1)$ and a constant $\Cest>0$, given any successive mesh partitions $\mesh$ and $\mesh_\ast$ satisfying conditions \eqref{eq:sr}, $f\in L^2(\Omega)$ and Dolfer parameter $\theta\in(0,1]$, the adaptive procedure $\mathbf{AFEM}\,[\mesh,f,\theta]$ with produce two successive solutions $\U\in\Xp$ and $\U_\ast\in\Xpp$ to problem \eqref{eq:dcp} for which
\begin{equation}\label{eq:result:thm:ConvConf}
\enorm{\u-\U_\ast}{}^2+\Cest\eta_{\mesh_\ast}^2(\U_\ast,\Omega)\leq\alpha\big(\enorm{\u-\U}{}^2+\Cest\eta_{\mesh}^2(\U,\Omega)\big).
\end{equation}
\end{theorem}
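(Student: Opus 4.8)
The plan is to combine the three main ingredients established above --- estimator error reduction (Lemma~\ref{lem:err}), the Galerkin Pythagoras identity (Lemma~\ref{lem:gp}), and discrete reliability via the D\"orfler marking (Lemma~\ref{lem:dre}) --- into a single contraction in the combined quantity $\enorm{\u-\U}{}^2+\Cest\eta_\mesh^2(\U,\Omega)$. First I would apply Lemma~\ref{lem:err} with $\V=\U$ and $\V_\ast=\U_\ast$ to get
\begin{equation}
\eta_{\mesh_\ast}^2(\U_\ast,\Omega)\leq(1+\delta)\left\{\eta_\mesh^2(\U,\Omega)-{\textstyle\frac12}\eta_\mesh^2(\U,\marked)\right\}+\cshape(1+{\textstyle\frac1\delta})\enorm{\U-\U_\ast}{}^2,
\end{equation}
and then invoke the D\"orfler property \eqref{markingstrategy}, which gives $\eta_\mesh^2(\U,\marked)\ge\theta\,\eta_\mesh^2(\U,\Omega)$, so that the braced term is bounded by $(1-\tfrac\theta2)\eta_\mesh^2(\U,\Omega)$.

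Next I would multiply this by $\Cest$ and add $\enorm{\u-\U_\ast}{}^2$, using Lemma~\ref{lem:gp} to replace $\enorm{\u-\U_\ast}{}^2$ by $\enorm{\u-\U}{}^2-\enorm{\U_\ast-\U}{}^2$. The crucial cancellation is that the $\enorm{\U-\U_\ast}{}^2$ term appearing with coefficient $\Cest\cshape(1+\tfrac1\delta)$ from the estimator reduction must be absorbed by the $-\enorm{\U_\ast-\U}{}^2$ coming from Pythagoras. To make room for this I would first spend a fraction of that negative term to control the energy error: by estimator reliability (Lemma~\ref{lem:er}), $\enorm{\u-\U}{}^2\leq\Crel\eta_\mesh^2(\U,\Omega)$, which lets me bound $\enorm{\u-\U}{}^2$ from below by a multiple of $\eta_\mesh^2(\U,\Omega)$ and thereby convert part of the energy decay into estimator decay --- this is the standard ``hiding'' trick. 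Concretely, write $\enorm{\u-\U}{}^2=\beta\enorm{\u-\U}{}^2+(1-\beta)\enorm{\u-\U}{}^2$ and use reliability on the first piece to feed the estimator term.

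The choice of constants is the delicate bookkeeping step. I would fix $\delta$ small enough that $(1+\delta)(1-\tfrac\theta2)<1$, then choose $\Cest$ small enough that $\Cest\cshape(1+\tfrac1\delta)\leq\tfrac12$ so the discrete energy term is absorbed by $\enorm{\U_\ast-\U}{}^2$ from Lemma~\ref{lem:gp} --- but here is the one genuine subtlety: Lemma~\ref{lem:gp} supplies exactly one copy of $-\enorm{\U_\ast-\U}{}^2$, with coefficient $1$, whereas after multiplying the estimator inequality by $\Cest$ the offending term has coefficient $\Cest\cshape(1+\tfrac1\delta)$, so it suffices to take $\Cest\leq[\cshape(1+\tfrac1\delta)]^{-1}$; no use of Lemma~\ref{lem:dre} is actually needed for this conforming case since the discrete error is directly available from Pythagoras. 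Finally, with $\beta\in(0,1)$ chosen so that $\beta/\Crel$ together with the surviving estimator coefficient $(1+\delta)(1-\tfrac\theta2)$ stays below $1$, one reads off the contraction factor
\begin{equation}
\alpha=\max\left\{1-\beta,\ \frac{(1+\delta)(1-\theta/2)+\beta\Cest/\Crel}{1}\right\}<1,
\end{equation}
after normalizing by the common factor. The main obstacle is purely the simultaneous tuning of $\delta$, $\Cest$ and $\beta$ so that all three competing constraints --- $(1+\delta)(1-\tfrac\theta2)<1$, absorption of the discrete term by the Pythagoras term, and $\alpha<1$ --- hold at once; everything else is assembling inequalities already proved.
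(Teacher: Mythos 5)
Your proposal is correct and follows essentially the same route as the paper: Lemma~\ref{lem:err} combined with the D\"orfler property, the Galerkin Pythagoras identity of Lemma~\ref{lem:gp} to cancel the $\enorm{\U_\ast-\U}{}^2$ term (the paper simply sets $\Cest=[\cshape(1+\frac{1}{\delta})]^{-1}$ so this cancellation is exact), and the reliability bound of Lemma~\ref{lem:er} applied to a convex split of $\enorm{\u-\U}{}^2$, yielding $\alpha=(\qest\Cest+\Crel)/(\Cest+\Crel)<1$; as you correctly note, Lemma~\ref{lem:dre} is not needed in this conforming setting. The only blemishes are cosmetic: reliability bounds the energy error \emph{above} by the estimator (equivalently $\eta_\mesh^2\ge\enorm{\u-\U}{}^2/\Crel$), and in your final formula the estimator coefficient should read $\qest+\beta\Crel/\Cest$ rather than $\beta\Cest/\Crel$, neither of which affects the existence of a suitable $\beta$ and hence of a contraction factor $\alpha<1$.
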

\begin{proof}
Adopt the following abbreviations:
\begin{eqnarray}
e_\mesh=\enorm{\u-\U}{},&E_\ast=\enorm{\U_\ast-\U}{},\\
\eta_\mesh=\eta_{\mesh}(\U,\Omega),&\eta_\mesh(\marked)=\eta_{\mesh}(\U,\marked).
\end{eqnarray}
Define constants $\qest(\theta,\delta):=(1+\delta)(1-{\frac{\theta^2}{2}})$ and $\Cest^{-1}:=\cshape(1+{\frac{1}{\delta}})$ so that in view of Dorlfer $-\eta_\mesh^2(\marked)\leq-\theta^2\eta_\mesh^2$,
\begin{equation}
(1+\delta)\left\{\eta_\mesh^2(\Omega)-{\textstyle\frac{1}{2}}\eta_\mesh^2(\marked)\right\}\leq q_\rm{est}\eta_\mesh^2,
\end{equation}
and \eqref{eq:result:lem:err} reads $\eta_{\mesh_\ast}^2\leq\qest\eta_\mesh^2+\Cest^{-1} E_\ast^2$.
Together with Galerkin orthogonality,
\begin{equation}
\begin{split}
e_{\mesh_\ast}^2+\Cest\eta_{\mesh_\ast}^2&\leq e_\mesh^2-E^2_\ast+\Cest\left(\qest\eta_\mesh^2+\Cest^{-1} E_\ast^2\right)=e_{\mesh}^2+\qest\Cest\eta_\mesh^2
\end{split}
\end{equation}
Let $\alpha$ be a positive parameter and express $e_{\mesh}^2=\alpha e_{\mesh}^2+(1-\alpha)e_{\mesh}^2$. Invoking on the reliability estimate $e_{\mesh}^2\leq \Crel\eta_\mesh^2$ on one of the decomposed terms gives
\begin{equation}
e_{\mesh_\ast}^2+\Cest\eta_{\mesh_\ast}^2\leq\alpha e_{\mesh}^2+\left[(1-\alpha)\Crel+\qest\Cest\right]\eta_\mesh^2.
\end{equation}
Choose $\delta>0$ with $\delta<\frac{\theta^2}{2-\theta^2}$ so that $q_\rm{est}\in(0,1)$ and we may choose a contractive $\alpha<1$ for which $(1-\alpha)\Crel+\qest\Cest=\textstyle\alpha \Cest$.
Indeed,
\begin{equation}
\alpha=\frac{\qest\Cest+\Crel}{\Cest+\Crel}<1.
\end{equation}
\end{proof}
\begin{remark}\label{rem:ConvergenceAlpha}
Observe that if $C(\delta):=\Crel/\Cest(\delta),$
\begin{equation}
\alpha=\frac{(1+\delta)(1-\frac{\theta^2}{2})+C}{1+C}
=1-\frac{1+\delta}{1+C}\left(\frac{\theta^2}{2}-\frac{\delta}{1+\delta}\right),
\end{equation}
and $(1+\delta)/(1+C)\to0$ as $\delta\to0$. Then it is clear that contractive factor $\alpha$ deteriorates as $\theta\to0$. We have $\alpha\approx1-c\theta^2$ for some constant $c<1$ depending on $\delta$.
\end{remark}
\section{Quasi-optimlaity of AFEM}
The selection of marked cells within every loop of the AFEM dictated by procedure \textbf{MARK} is determined by the error indicators \eqref{eq:indicator}. Hence the decay rate of the adaptive method in terms of the DOFs is heavily dependant on the estimator \eqref{eq:estimator} which, in view of the Global Upper Bound \eqref{lem:er}, may exhibit slower decay than the enrgy norm whenever overresatimation occurs.
In view of the Global Lower Bound \eqref{lem:EstimatorEfficiency}, the quality of the estimator strongly depends on the resolution of the right-hand-side source function $f$ on the mesh resulting from the averaging process the finite element approximation yields manifesting in the oscillation term $\osc_\mesh(f,\Omega)$. 
The fact that the estimator is equivalent to the error in the energy norm up-to the oscillation term
\begin{equation}
\Ceff\eta^2(\U,\Omega)-\osc_\mesh^2(f,\Omega)\leq\enorm{\u-\U}{}^2\leq\Crel\eta_\mesh^2(\U,\Omega)
\end{equation}
motivates measuring the decay rate of the \emph{total-error}
\begin{equation}\label{eq:TotalError}
\rho_\mesh(\v,\V,g):=\left(\enorm{\v-\V}{}^2+\osc^2_\mesh(g,\Omega)\right)^{1/2},
\end{equation}
which in the asymptotic regime, due to estimator dominance over oscillation, can be made equivalent to the quasi-error
\begin{equation}
\rho^2_\mesh(\u,\U,f)\approx\eta^2_\mesh(\U,\mesh)\approx\enorm{\u-\U}{}^2+\Cest\eta_\mesh^2(\U,\mesh)
\end{equation}
In what follows we show that Cea's lemma holds for the total-error norm, that is, that the finite element solution $\U$ is an optimal choice from $\Xp$ in total-error norm. 
\begin{lemma}[Optimality of total error]\label{lem:OptimalityTotalError}
Let $\u$ be the solution of \eqref{eq:cwp} and for all $\mesh\in\scr{P}$ let $\U\in\Xp$ be the discrete solution to \eqref{eq:dcp}. Then,
\begin{equation}
\rho^2_\mesh(\u,\U,f)\leq\inf_{\V\in\Xp}\rho^2_\mesh(\u,\V,f).
\end{equation}
\end{lemma}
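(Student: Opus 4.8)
The plan is to reduce the claimed inequality to the standard Galerkin best-approximation property, exploiting the fact that the oscillation term $\osc_\mesh(f,\Omega)$ in the definition \eqref{eq:TotalError} of $\rho_\mesh$ depends only on the mesh $\mesh$ and the data $f$, and not on the particular element of $\Xp$ chosen. Writing out the total error,
\begin{equation}
\rho^2_\mesh(\u,\V,f)=\enorm{\u-\V}{}^2+\osc^2_\mesh(f,\Omega),
\end{equation}
the second summand is a constant with respect to $\V\in\Xp$, so it passes through the infimum unchanged. Hence proving the lemma is equivalent to establishing $\enorm{\u-\U}{}^2\leq\inf_{\V\in\Xp}\enorm{\u-\V}{}^2$.

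The key step is then just the energy best-approximation property of the Galerkin spline solution. First I would recall Galerkin orthogonality \eqref{eq:cgo}, namely $\a(\u-\U,\V)=0$ for all $\V\in\Xp$. Next, for arbitrary $\V\in\Xp$ write $\u-\V=(\u-\U)+(\U-\V)$ with $\U-\V\in\Xp$, so that by orthogonality
\begin{equation}
\enorm{\u-\V}{}^2=\enorm{\u-\U}{}^2+\enorm{\U-\V}{}^2\geq\enorm{\u-\U}{}^2.
\end{equation}
Here I am using that the energy norm $\enorm{\cdot}{}=\sqrt{\a(\cdot,\cdot)}$ is genuinely the norm induced by the symmetric coercive form $\a$, with the unit proportionality constants noted after \eqref{eq:cwp}, so the Pythagorean identity is exact (this is really the same computation as Lemma \ref{lem:gp}, with the coarse space in place of $\u$). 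Taking the infimum over $\V\in\Xp$ gives $\enorm{\u-\U}{}^2\leq\inf_{\V\in\Xp}\enorm{\u-\V}{}^2$, and then adding $\osc^2_\mesh(f,\Omega)$ to both sides yields the asserted estimate for $\rho^2_\mesh$.

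I do not anticipate a genuine obstacle here; the lemma is essentially Céa's lemma for the symmetric problem, repackaged so that the oscillation term is transparently inert. The only point that deserves a word of care is making explicit that $\osc_\mesh(f,\Omega)$ is a function of $(\mesh,f)$ alone — it is defined via $\osc_\mesh(\tau)=h_\tau^2\norm{f-\overline{f}}{L^2(\tau)}$ with $\overline f$ the $L^2(\tau)$-projection of $f$ onto polynomials, with no dependence on the discrete solution — so that it legitimately factors out of the infimum. Once that observation is in place, the rest is the one-line Pythagoras argument above.
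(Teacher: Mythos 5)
Your argument is correct and is essentially the same as the paper's: both rest on Galerkin orthogonality giving the exact Pythagorean identity $\enorm{\u-\V}{}^2=\enorm{\u-\U}{}^2+\enorm{\U-\V}{}^2$, with the oscillation term $\osc_\mesh(f,\Omega)$ independent of $\V$ and thus passing through the infimum. Your explicit remark that the oscillation depends only on $(\mesh,f)$ is a small clarity bonus but not a difference in substance.
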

\begin{proof}
In view of Galerkin orthogonality and the symmetry of the bilinear form $\a(\u-\U,\U-\V)=\a(\U-\V,\u-\U)=0$ we have
\begin{equation}
\begin{split}
\a(\u-\V,\u-\V_\eps)&=\a(\u-\U,\u-\U)+\a(\U-\V,\U-\V)
\end{split}
\end{equation}
and we have $\enorm{\u-\V}{}^2=\enorm{\u-\U}{}^2+\enorm{\U-\V}{}^2$.
Therefore,
\begin{equation}
\begin{split}
\rho_\mesh^2(\u,\U,f)&\leq\enorm{\u-\U}{}^2+\osc^2_\mesh(f,\Omega)+\enorm{\U-\V}{}^2\\
&=\enorm{\u-\V}{}^2+\osc^2_\mesh(f,\Omega)\\
&=\rho_\mesh^2(\u,\V,f)
\end{split}
\end{equation}
\end{proof}
\subsection{AFEM approximation class}
In order to assess the perfomance of AFEM \eqref{eq:afem}, the rate of decay of error in terms of DOFs, we consider the following nonlinear approximation classes that govern the adaptive finite element problem:
\begin{equation}
\cal{A}_s=\left\{\v\in H^2_0(\Omega):\sup_{N>0}N^s\inf_{\mesh\in\scr{P}_N}\inf_{\V\in\Xp}\enorm{\v-\V}{}<\infty\right\}
\end{equation}
\begin{equation}
\cal{O}_s=\left\{g\in L^2(\Omega):\sup_{N>0}N^s\inf_{\mesh\in\scr{P}_N}\norm{h_\mesh^2(g-\Pi g)}{L^2(\Omega)}<\infty\right\}
\end{equation}
If $(\u,f)\in\cal{A}_s\times\cal{O}_s$ then nonlinear-approximation theory dictates there exists an admissible partition $\mesh\in\scr{P}_N$ for which $\u$ can be approximated in $\Xp$ with an error proprtional to $N^{-s}$. We hope that the proposed AFEM \eqref{eq:afem} will generate a sequence of partitions $\mesh_\ell$ for which $\rho_{\mesh_\ell}^2(\u,\U_\ell,f)$ decays with order $(\#\mesh_\ell)^{-s}$. We define the approximation class described by the total-error norm \eqref{eq:TotalError}
\begin{equation}
\bb{A}_s=\left\{v\in H^2_0(\Omega):\semi{v}{\bb{A}_s}:=\sup_{N>0}N^s\inf_{\mesh\in\scr{P}_N}E_\mesh(\v)<\infty\right\}
\end{equation}
where
\begin{equation}
E_\mesh(\v)=\inf_{\V\in\Xp}\rho_\mesh(\v,\V,\cal{L}\v),\quad\v\in H^2_0(\Omega)
\end{equation}
\begin{remark}
We will restrict values $s\in(0,r/2]$. For values $s>r/2$ resulting approximation spaces will consist of polynomials only. Valid values for rate $s$ will be refined and made more precise when we characterize the aforementioned approximation classes in terms of smoothness.
\end{remark}
\begin{lemma}[Equivalence of classes]\label{lem:EquivClasses}
Let $\u$ be the weak solution to \eqref{eq:cwp}. If $\u\in\cal{A}_s$ and $f\in\cal{O}_s$ then $\u\in\bb{A}_s$.
\end{lemma}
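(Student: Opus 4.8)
The plan is to combine two things: the decay of the best finite-element approximation error coming from $\u\in\cal{A}_s$, and the decay of the oscillation coming from $f\in\cal{O}_s$, then splice the two partitions via the overlay construction \eqref{eq:MeshOverelay}. Fix $N>0$. By definition of $\cal{A}_s$, there is a partition $\mesh_1\in\scr{P}_N$ and $\V_1\in\bb{X}_{\mesh_1}$ with $\enorm{\u-\V_1}{}\lapprox\semi{\u}{\cal{A}_s}N^{-s}$. By definition of $\cal{O}_s$, there is a partition $\mesh_2\in\scr{P}_N$ with $\norm{h_{\mesh_2}^2(f-\Pi_{\mesh_2}f)}{L^2(\Omega)}\lapprox\semi{f}{\cal{O}_s}N^{-s}$. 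Form the overlay $\mesh=\mesh_1\oplus\mesh_2$; by \eqref{eq:MeshOverelay} we have $\#\mesh\leq\#\mesh_1+\#\mesh_2-\#\mesh_0\lapprox N$, so $\mesh\in\scr{P}_{CN}$ for some fixed $C$.

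Next I would show that the total error on the overlay is controlled by the two quantities above. Since $\mesh$ refines $\mesh_1$, the space $\bb{X}_{\mesh_1}\subseteq\bb{X}_\mesh$, so $\inf_{\V\in\bb{X}_\mesh}\enorm{\u-\V}{}\leq\enorm{\u-\V_1}{}\lapprox\semi{\u}{\cal{A}_s}N^{-s}$. For the oscillation term I would use that $h_\mesh\leq h_{\mesh_2}$ pointwise (as $\mesh$ refines $\mesh_2$) together with the fact that the local $L^2$-projection is a best approximation, which gives $\osc_\mesh(f,\Omega)=\norm{h_\mesh^2(f-\Pi_\mesh f)}{L^2(\Omega)}\leq\norm{h_{\mesh_2}^2(f-\Pi_{\mesh_2}f)}{L^2(\Omega)}\lapprox\semi{f}{\cal{O}_s}N^{-s}$; the key monotonicity point is that refining a mesh cannot increase either $h_\mesh$ or the projection error on each cell. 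Combining, for any $\V\in\bb{X}_\mesh$ realizing the infimum up to a constant,
\begin{equation}
E_\mesh(\u)\leq\rho_\mesh(\u,\V,\cal{L}\u)=\left(\enorm{\u-\V}{}^2+\osc_\mesh^2(f,\Omega)\right)^{1/2}\lapprox\left(\semi{\u}{\cal{A}_s}^2+\semi{f}{\cal{O}_s}^2\right)^{1/2}N^{-s}.
\end{equation}
Since $\#\mesh\lapprox N$, this means $(\#\mesh)^s E_\mesh(\u)$ is bounded uniformly in $N$, hence $\sup_{M>0}M^s\inf_{\mesh\in\scr{P}_M}E_\mesh(\u)<\infty$, i.e.\ $\u\in\bb{A}_s$, with $\semi{\u}{\bb{A}_s}\lapprox\semi{\u}{\cal{A}_s}+\semi{f}{\cal{O}_s}$.

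The main obstacle, and the step deserving the most care, is the monotonicity of the oscillation under refinement: one must verify that $\osc_\mesh(f,\omega)\leq\osc_{\mesh_2}(f,\omega)$ whenever $\mesh$ refines $\mesh_2$. This is true cell-by-cell — if a coarse cell $\cell$ is split into children $\cell_j$, then $h_{\cell_j}\leq h_\cell$ and $\sum_j\norm{f-\Pi_{\mesh}f}{L^2(\cell_j)}^2\leq\norm{f-\Pi_{\mesh_2}f}{L^2(\cell)}^2$ because the piecewise polynomial space on the children contains the restriction of the coarse polynomial, so the best approximation only improves — but one should state it cleanly since it is the one place the argument uses more than abstract nonsense. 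A secondary technical point is bookkeeping the overlay cardinality bound \eqref{eq:MeshOverelay} to land in $\scr{P}_{CN}$ rather than $\scr{P}_N$; this only changes $\semi{\u}{\bb{A}_s}$ by the fixed factor $C^s$ and is harmless for membership in the class.
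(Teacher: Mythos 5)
Your proposal follows essentially the same route as the paper: take the two partitions furnished by $\u\in\cal{A}_s$ and $f\in\cal{O}_s$, form the overlay $\mesh_1\oplus\mesh_2$ with the cardinality bound \eqref{eq:MeshOverelay}, and use spline-space nesting for the energy term together with monotonicity of the oscillation under refinement. Your write-up is simply more explicit than the paper's (notably on the cell-by-cell monotonicity of $\osc$ and the harmless constant in the cardinality), and it is correct.
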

\begin{proof}
By assumption we have two admissible partitions $\mesh_1,\mesh_2\in\scr{P}_N$ and a finite-element spline $\V\in\X_{\mesh_1}$ such that $\enorm{\u-\V}{}\lapprox N^{-s}$ and $\osc_{\mesh_2}(f)\lapprox N^{-s}$. Invoking Mesh Overlay \eqref{eq:MeshOverelay} we obtain an admissible partition $\mesh:=\mesh_1\oplus\mesh_2$ for which $\#\mesh\lapprox 2N$ and because of spline space nesting we have
\[
\enorm{\u-\V}{}+\osc_{\mesh}(f)\lapprox N^{-s}.
\]
\end{proof}
\subsection{Quasi-optimality}
The contraction achieved in the convergence proof is ensured by the Dorfler marking strategy. However the relationship between the Dorfler strategy and error reduction in the total-error norm goes deeper than asserted in Theorem \ref{thm:ConvConf}. In the following lemma we show that if $R_{\mesh\to\mesh_\ast}$ is a set of refined elements resultiing in a reduction of error in contractive sense, then necessarily the Dorfler property holds for  the set $\omega_{R_{\mesh\to\mesh_\ast}}$. The fact will be instrumental in proving that the cardinality of marked cells will keep the partition cardinality at each refinement step proprtional to the optimal quantity dictated by nononlinear approximation.
\begin{lemma}[Optimal Marking]\label{lem:OptimalMarking}
Let $\U=\ms{SOLVE}\,[\mesh,f]$, let $\mesh_\ast$ be any refinement of $\mesh$ and let $\U_\ast=\ms{SOLVE}\,[\mesh_\ast,\eff]$. If for some positive $\mu<1$
\begin{equation}\label{eq:lem:OptimalMarking:assumption}
\enorm{u-\U_\ast}{}^2+\rm{osc}_\ast^2(\eff,\mesh_\ast)\leq\mu\big(\enorm{u-\U}{}^2+\osc_\mesh^2(\eff,\mesh)\big),
\end{equation}
and $R_{P\to P_\ast}$ denotes collection of all elements in $P$ requiring refinement to obtain $P_\ast$ from $P$, then for $\theta\in(0,\theta_\ast)$ we have 
\begin{equation}
\eta_\mesh (\U,\omega_{R_{P\to P_\ast}})\ge\theta\eta_\mesh (\U,P)
\end{equation}
\end{lemma}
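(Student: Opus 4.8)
The plan is to exploit the contraction assumption \eqref{eq:lem:OptimalMarking:assumption} together with the efficiency estimate (Lemma \ref{lem:EstimatorEfficiency}), the discrete reliability estimate (Lemma \ref{lem:dre}), and the relation $R-\overline R=f-\overline f$ that tied the residual oscillation to the data oscillation. First I would observe that the assumption gives a genuine reduction: rearranging \eqref{eq:lem:OptimalMarking:assumption} yields
\begin{equation}
(1-\mu)\big(\enorm{\u-\U}{}^2+\osc_\mesh^2(f,\mesh)\big)\leq\enorm{\u-\U}{}^2+\osc_\mesh^2(f,\mesh)-\enorm{\u-\U_\ast}{}^2-\osc_{\mesh_\ast}^2(f,\mesh_\ast).
\end{equation}
By the Galerkin-Pythagoras identity (Lemma \ref{lem:gp}) the difference of the energy-error terms is exactly $\enorm{\U_\ast-\U}{}^2$, and since $\mesh_\ast\ge\mesh$ the oscillation is monotone on unrefined cells, so $\osc_\mesh^2(f,\mesh)-\osc_{\mesh_\ast}^2(f,\mesh_\ast)\leq\osc_\mesh^2(f,R_{\mesh\to\mesh_\ast})\leq\osc_\mesh^2(f,\omega_{R_{\mesh\to\mesh_\ast}})$. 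Hence the left side of the contraction is controlled by $\enorm{\U_\ast-\U}{}^2$ plus oscillation localized on the refinement patch.

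Next I would bound $\enorm{\U_\ast-\U}{}^2$ from above using discrete reliability, Lemma \ref{lem:dre}: $\enorm{\U_\ast-\U}{}^2\leq C_\rm{dRel,1}\,\eta_\mesh^2(\U,\omega_{R_{\mesh\to\mesh_\ast}})$. For the localized oscillation term, I would note $\osc_\mesh^2(f,\tau)=h_\tau^4\norm{f-\overline f}{L^2(\tau)}^2\leq h_\tau^4\norm{f-\Op\U}{L^2(\tau)}^2\leq\eta_\mesh^2(\U,\tau)$ because $\overline f$ is the $L^2$-best approximation of $f$ while $\Op\U$ is merely one competitor, so summing over $\tau\in\omega_{R_{\mesh\to\mesh_\ast}}$ gives $\osc_\mesh^2(f,\omega_{R_{\mesh\to\mesh_\ast}})\leq\eta_\mesh^2(\U,\omega_{R_{\mesh\to\mesh_\ast}})$. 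Combining, the entire left-hand side is $\lesssim\eta_\mesh^2(\U,\omega_{R_{\mesh\to\mesh_\ast}})$ with a constant depending only on $\cshape$ and $\mu$.

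Finally I would bound the right-hand side of the rearranged inequality from below by the global estimator using reliability and the efficiency bound: $\enorm{\u-\U}{}^2+\osc_\mesh^2(f,\mesh)\geq\Ceff\,\eta_\mesh^2(\U,\Omega)$ (up to the harmless constant, this is exactly Lemma \ref{lem:EstimatorEfficiency} rearranged). Chaining the two directions produces
\begin{equation}
(1-\mu)\Ceff\,\eta_\mesh^2(\U,\Omega)\leq C\,\eta_\mesh^2(\U,\omega_{R_{\mesh\to\mesh_\ast}}),
\end{equation}
i.e. $\eta_\mesh^2(\U,\omega_{R_{\mesh\to\mesh_\ast}})\geq\theta_\ast^2\,\eta_\mesh^2(\U,\Omega)$ with $\theta_\ast^2:=(1-\mu)\Ceff/C$, which is the asserted Dörfler property for any $\theta<\theta_\ast$. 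The main obstacle I anticipate is bookkeeping the constants so that $\theta_\ast$ genuinely lies in $(0,1]$ and tracking which estimate is applied on which subdomain — in particular making sure the oscillation appearing on the left (from the telescoping of $\osc^2$) is subsumed into the estimator on the refinement patch rather than double-counted against the efficiency bound on $\Omega$; the analytic ingredients themselves are all already available.
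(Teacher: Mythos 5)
Your argument is correct and follows essentially the same route as the paper's proof: rearrange the contraction hypothesis, use Galerkin--Pythagoras for the energy part and monotonicity/localization for the oscillation part, then bound $\enorm{\U_\ast-\U}{}^2$ by discrete reliability and absorb the localized oscillation into the estimator on $\omega_{R_{\mesh\to\mesh_\ast}}$, finally invoking efficiency to lower-bound the total error by $\Ceff\,\eta_\mesh^2(\U,\Omega)$. The only cosmetic difference is bookkeeping: the paper keeps $\theta_\ast^2:=\Ceff/(1+\Cdrel)$ fixed and couples $\mu$ to $\theta$ via $\theta^2=(1-\mu)\theta_\ast^2$, whereas you fold the factor $(1-\mu)$ into your definition of $\theta_\ast$; the substance is identical.
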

\begin{proof}
Let $\theta<\theta_\ast$, the parameter $\theta_\ast$ to be specified later, such that the linear contraction of the total error holds for $\mu:=1-\frac{\theta^2}{\theta_\ast^2}>0$. 
The Efficiency Estimate \eqref{eq:result:lem:EstimatorEfficiency} together with the assumption \eqref{eq:lem:OptimalMarking:assumption}
\begin{equation}
\begin{split}
(1-\mu)\Ceff\eta_\mesh^2(\U,\mesh)&\leq(1-\mu)\rho_\mesh^2(\u,\U,f)\\
&=\rho_\mesh^2(\u,\U,f)-\rho_\ast^2(\u_\ast,\U_\ast,f)\\
&=\enorm{\u-\U}{}^2-\enorm{\u-\U_\ast}{}^2+\osc_P^2(f,\Omega)-\osc_{\mesh_\ast}^2(f,\Omega)
\end{split}
\end{equation}
In view of Galerkin pythagorus gives $\enorm{\u-\U}{}^2-\enorm{\u-\U_\ast}{}^2=\enorm{\U-\U_\ast}{}^2$.
$R_{\mesh\to\mesh_\ast}\subset\mesh$ so $\osc_P^2(f,\Omega)-\osc_{\mesh_\ast}^2(f,\Omega)\leq\osc_\mesh^2(f,\omega_{R_{\mesh\to\mesh_\ast}})$.
Estimator asymptotic dominance over oscillation $\osc_\mesh^2(\U,\tau)\leq\eta_\mesh^2(\U,\tau)$ and Discrete Upper Bound \eqref{eq:result:lem:dre}
\begin{equation}
(1-\mu)\Ceff\eta_\mesh^2(\U,\mesh)\leq(1+\Cdrel)\eta_\mesh^2(\U,\omega_{R_{\mesh\to\mesh_\ast}})
\end{equation}
By definition $\theta^2=(1-\mu)\theta_\ast^2<\theta_\ast^2$ we arrive at $\theta^2\eta_\mesh^2(\U,\mesh)\leq\eta_\mesh^2(\U,\omega_{R_{\mesh\to\mesh_\ast}})$ for $\theta^2<\frac{\Ceff}{1+\Cdrel}=:\theta_\ast^2$.
\end{proof}
\begin{lemma}[Cardinality of Marked Cells]\label{lem:MarkedComplexity}
Let $\{(\mesh_\ell,\bb{X}_\ell,\U_\ell)\}_{\ell\ge0}$ be sequence generated by $\ms{AFEM}\,(\mesh_0,\eff;\eps,\theta)$ for admissible $P_0$ and the pair $\u\in\bb{A}^s$ for some $s>0$ then 
\begin{equation}\label{eq:lem:MarkedComplexity}
\#\scr{M}_\ell\lapprox\left(1-\frac{\theta^2}{\theta_\ast^2}\right)^{-\frac{1}{2s}}\semi{\u}{\bb{A}_s}^{-\frac{1}{s}}\bigg\{\enorm{\u-\U_\ell}{}^2+\rm{osc}_\ell^2(\eff,\mesh_\ell)\bigg\}^{-\frac{1}{2s}}
\end{equation}
\end{lemma}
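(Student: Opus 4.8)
The strategy follows the now-standard template of Stevenson–Cascon–Kreuzer–Nochetto, adapted to the hierarchical-spline setting. First I would fix the level $\ell$ and observe that by Theorem \ref{thm:ConvConf} the total error contracts linearly: the pair $(\U_\ell,\mesh_\ell)$ and $(\U_{\ell+1},\mesh_{\ell+1})$ satisfy $\rho_{\mesh_{\ell+1}}^2(\u,\U_{\ell+1},f)\leq\alpha\,\rho_{\mesh_\ell}^2(\u,\U_\ell,f)$. Given a target tolerance, choose the smallest $N$ such that the best possible $N$-cell admissible partition $\mesh_N$ (realizing the $\bb{A}_s$-rate) brings $E_{\mesh_N}(\u)$ below a multiple of the current total error $\rho_{\mesh_\ell}(\u,\U_\ell,f)$; concretely, set $N\simeq \semi{\u}{\bb{A}_s}^{1/s}\rho_{\mesh_\ell}^{-1/s}$, so that $\#\mesh_N - \#\mesh_0\lapprox N$ and $E_{\mesh_N}(\u)$ is a small fraction of $\rho_{\mesh_\ell}(\u,\U_\ell,f)$.

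Next I would form the overlay $\mesh_\ast:=\mesh_\ell\oplus\mesh_N$, which by \eqref{eq:MeshOverelay} satisfies $\#\mesh_\ast\lapprox \#\mesh_\ell + N$, and whose finite-element space contains both $\bb{X}_{\mesh_\ell}$ and $\bb{X}_{\mesh_N}$. By Lemma \ref{lem:OptimalityTotalError} applied on $\mesh_\ast$ together with the contraction engineered above, one shows the Galerkin solution $\U_\ast$ on $\mesh_\ast$ achieves $\rho_{\mesh_\ast}^2(\u,\U_\ast,f)\leq\mu\,\rho_{\mesh_\ell}^2(\u,\U_\ell,f)$ with $\mu=1-\theta^2/\theta_\ast^2$, provided the constant multiples in the choice of $N$ are calibrated correctly. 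This is exactly the hypothesis \eqref{eq:lem:OptimalMarking:assumption} of Lemma \ref{lem:OptimalMarking}, so that lemma yields $\eta_{\mesh_\ell}(\U_\ell,\omega_{R_{\mesh_\ell\to\mesh_\ast}})\ge\theta\,\eta_{\mesh_\ell}(\U_\ell,\mesh_\ell)$; i.e., $\omega_{R_{\mesh_\ell\to\mesh_\ast}}$ is a valid D\"orfler set for $\mesh_\ell$ at parameter $\theta$. Since \textbf{MARK} selects a set $\scr{M}_\ell$ of \emph{minimal} cardinality with the D\"orfler property, we get $\#\scr{M}_\ell\leq \#\omega_{R_{\mesh_\ell\to\mesh_\ast}}\lapprox \#R_{\mesh_\ell\to\mesh_\ast}$ by the finite-intersection property \eqref{eq:sr}, and $\#R_{\mesh_\ell\to\mesh_\ast}\leq \#\mesh_\ast-\#\mesh_\ell\lapprox N$. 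Substituting the chosen value of $N$ and recalling $\rho_{\mesh_\ell}^2\approx \enorm{\u-\U_\ell}{}^2+\osc_\ell^2(f,\mesh_\ell)$ gives \eqref{eq:lem:MarkedComplexity}, with the factor $(1-\theta^2/\theta_\ast^2)^{-1/(2s)}$ coming from how much slack $\mu$ leaves in the calibration of $N$.

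The main obstacle is the calibration step: choosing the constant in front of $N$ so that $\mesh_N$ (which only controls the \emph{best} total error $E_{\mesh_N}(\u)=\inf_{\V}\rho_{\mesh_N}(\u,\V,\cal{L}\u)$) genuinely forces the \emph{Galerkin} total error on the overlay below $\mu\,\rho_{\mesh_\ell}^2$. This requires Lemma \ref{lem:OptimalityTotalError} (C\'ea in total-error norm) on $\mesh_\ast$, monotonicity of the oscillation under refinement, and the equivalence $\rho\approx\eta$ in the asymptotic regime; one must track that the oscillation piece $\osc_{\mesh_\ast}(f)\leq\osc_{\mesh_N}(f)$ behaves correctly under the overlay, which is where admissibility of $\mesh_N$ and the nesting of spline spaces are used. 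A secondary technical point is passing from $\#\omega_{R}$ to $\#R$ and from $\#R$ to $N$ without picking up mesh-dependent constants — this is handled purely by \eqref{eq:sr} and \eqref{eq:MeshOverelay}, so it is routine once stated.
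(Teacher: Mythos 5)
Your proposal is correct and follows essentially the same route as the paper: choose a tolerance $\eps^2=\mu\,\rho_\ell^2$ with $\mu=1-\theta^2/\theta_\ast^2$, take a near-optimal partition from the class $\bb{A}_s$, overlay it with $\mesh_\ell$, use C\'ea in the total-error norm (Lemma \ref{lem:OptimalityTotalError}) plus oscillation monotonicity to verify hypothesis \eqref{eq:lem:OptimalMarking:assumption}, invoke Lemma \ref{lem:OptimalMarking} to see the refined set is a D\"orfler set, and conclude by minimality of \textbf{MARK} together with the overlay bound \eqref{eq:MeshOverelay}. The only cosmetic differences are that your opening appeal to Theorem \ref{thm:ConvConf} is not actually needed, and you are slightly more explicit than the paper about passing from $\#\omega_{R_{\mesh_\ell\to\mesh_\ast}}$ to $\#R_{\mesh_\ell\to\mesh_\ast}$ via \eqref{eq:sr}.
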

\begin{proof}
Assume that the marking parameter satisfies the hypothesis of Theorem \ref{lem:OptimalMarking} and suppose that $u\in\bb{A}_s$ for some $s>0$.
Set $\mu=1-\frac{\theta^2}{\theta_\ast^2}$ and let $\eps:=\mu\rho_\ell(\u,\U_\ell,f)^2$. Then by definition of $\bb{A}_s$ there exists an admissible partition $P_\eps$ and a spline $\V_\eps\in\X_\eps$ for which
\begin{equation}
\rho^2_\eps(\u,\V_\eps,f)\leq\eps^2\quad\text{with}\quad\#\mesh_\eps-\#\mesh_0\lapprox|u|_{\bb{A}_s}^{1/s}\eps^{-1/s}.
\end{equation}
Let $\mesh_\ast:=\mesh_\eps\oplus\mesh_\ell$ be the overlay partition of $\mesh_\eps$ and $\mesh_\ell$, $\ell\ge0$, and let $\U_\ast\in\X_\ast$ be the corresponding spline solution.
In view of Optimality of Total Error in Lemma \ref{lem:OptimalityTotalError} and the fact $\mesh_\ast\ge\mesh_\eps$ makes $\X_\ast\supseteq\X_\eps$ and
\begin{equation}
\rho^2_\ast(\u,\U_\ast,f)\leq\rho^2_\eps(\u,\V_\eps,f)\leq\eps^2=\mu\rho^2_\ell(\u,\U_\ell,f)
\end{equation}
From Optimal Marking of Lemma \ref{lem:OptimalMarking} we have $R_{\mesh_\ell\to\mesh_\ast}\subset\mesh_\ell$ satisfying Dorfler property for $\theta<\theta_\ast$.
\begin{equation}
\#\scr{M}_\ell\leq\#R_{\mesh_\ell\to\mesh_\ast}\leq\#\mesh_\ast-\#\mesh_\ell
\end{equation}
In view of overlay property $\#\mesh_\ast\leq\mesh_\eps+\#\mesh_\ell-\#\mesh_0$ in \eqref{eq:MeshOverelay} and definition of $\eps$ we arrive at
\begin{equation}
\#\scr{M}_\ell\leq\#\mesh_\eps-\#\mesh_0\lapprox\mu^{-1/2s}|u|_{\bb{A}_s}^{1/s}\rho_\ell(\u,\U_\ell,f)^{-1/s}
\end{equation}
\end{proof}
\begin{theorem}[Quasi-optimality]
If $\u\in\bb{A}^s$ and $P_0$ is admissible, then the call $\mathbf{AFEM}\,[\mesh_0,\eff,\eps,\theta]$ generates a sequence $\{(\mesh_\ell,\bb{X}_\ell,\U_\ell)\}_{\ell\ge0}$ of strictly admissible partitions $\mesh_\ell$, conforming finite-element spline spaces $\bb{X}_\ell$ and discrete solutions $\U_\ell$ satisfying
\begin{equation}
\rho_\ell(\u,\U_\ell,\eff)
\lapprox\Phi(,\theta)\semi{(\u,\eff)}{\bb{A}_s}^{}(\#\mesh-\#\mesh_0)^{-s}
\end{equation}
with $\Phi(,\theta)=(1-{\theta^2}/{\theta_\ast^2})^{-{1}/{2}}$
\end{theorem}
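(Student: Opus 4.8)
The plan is to combine the per-step contraction of the total error (Theorem~\ref{thm:ConvConf}) with the cardinality bound on the marked cells (Lemma~\ref{lem:MarkedComplexity}) via a standard summing argument. First I would recall that Theorem~\ref{thm:ConvConf} gives a uniform constant $\alpha\in(0,1)$ such that the quasi-error $\xi_\ell^2:=\enorm{\u-\U_\ell}{}^2+\Cest\eta_{\mesh_\ell}^2(\U_\ell,\Omega)$ satisfies $\xi_{\ell+1}^2\leq\alpha\xi_\ell^2$, hence $\xi_\ell^2\leq\alpha^{\ell-k}\xi_k^2$ for $k\leq\ell$, and (using reliability and the asymptotic dominance of $\eta$ over $\osc$) the total error $\rho_\ell:=\rho_{\mesh_\ell}(\u,\U_\ell,\eff)$ is equivalent to $\xi_\ell$ up to constants depending only on $\Crel,\Ceff,\Cest,\cshape$. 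This geometric decay of $\rho_\ell$ in $\ell$ is the engine; the work is converting it into decay in $\#\mesh_\ell-\#\mesh_0$.

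Next I would invoke the complexity estimate \eqref{eq:MarkingComplexity}, which gives $\#\mesh_\ell-\#\mesh_0\leq\Lambda\sum_{j=0}^{\ell-1}\#\scr{M}_j$, and then substitute the bound from Lemma~\ref{lem:MarkedComplexity}, $\#\scr{M}_j\lapprox\bigl(1-\theta^2/\theta_\ast^2\bigr)^{-1/2s}\semi{\u}{\bb{A}_s}^{1/s}\rho_j^{-1/s}$. Summing yields
\begin{equation}
\#\mesh_\ell-\#\mesh_0\lapprox\Phi(,\theta)^{1/s}\,\semi{(\u,\eff)}{\bb{A}_s}^{1/s}\sum_{j=0}^{\ell-1}\rho_j^{-1/s}.
\end{equation}
Because $\rho_j$ decays geometrically, $\rho_j\lapprox\alpha^{(\ell-j)/2}\rho_\ell$ for $j\leq\ell$, so $\rho_j^{-1/s}\lapprox\rho_\ell^{-1/s}\alpha^{-(\ell-j)/(2s)}$, and the sum $\sum_{j=0}^{\ell-1}\alpha^{-(\ell-j)/(2s)}=\sum_{i=1}^{\ell}\alpha^{-i/(2s)}$ is a geometric series with ratio $\alpha^{-1/(2s)}>1$, dominated by its last term $\alpha^{-\ell/(2s)}\cdot\frac{1}{1-\alpha^{1/(2s)}}$; crucially this last term is comparable to $\rho_\ell^{-1/s}\rho_0^{1/s}$-type quantities, so the whole sum is $\lapprox\rho_\ell^{-1/s}$ times a constant depending only on $\alpha$ and $s$. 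Therefore $\#\mesh_\ell-\#\mesh_0\lapprox\Phi(,\theta)^{1/s}\semi{(\u,\eff)}{\bb{A}_s}^{1/s}\rho_\ell^{-1/s}$, and raising both sides to the power $-s$ and rearranging gives the claimed $\rho_\ell(\u,\U_\ell,\eff)\lapprox\Phi(,\theta)\semi{(\u,\eff)}{\bb{A}_s}(\#\mesh_\ell-\#\mesh_0)^{-s}$.

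The main obstacle I anticipate is the bookkeeping around the geometric-sum step: one must be careful that the hidden constants absorbed in $\lapprox$ depend only on $\theta_\ast$ (hence on $\Ceff,\Cdrel$), on $\cshape$, and on $s$, but not on $\ell$ or on $\theta$ in a way that would spoil the stated $\Phi(,\theta)$ dependence — in particular the factor $1/(1-\alpha^{1/(2s)})$ must be controlled uniformly, which requires $\alpha$ to be bounded away from $1$; by Remark~\ref{rem:ConvergenceAlpha} this is exactly where the $\theta$-dependence $\alpha\approx1-c\theta^2$ enters, and one should check that folding it into the constant still yields the advertised $(1-\theta^2/\theta_\ast^2)^{-1/2}$ form rather than something worse. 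A secondary technical point is justifying the asymptotic equivalence $\rho_\ell^2\approx\eta_\ell^2\approx\xi_\ell^2$ uniformly in $\ell$: this uses the efficiency bound to control $\osc_\ell$ by $\eta_\ell$ and the reliability bound to control $\enorm{\u-\U_\ell}{}$ by $\eta_\ell$, and one must ensure the marked-cardinality lemma is being applied with the same normalization of $\rho_\ell$. Once these constant-tracking issues are settled the argument is entirely routine.
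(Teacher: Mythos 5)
Your overall architecture is the same as the paper's: bound $\#\mesh_\ell-\#\mesh_0$ through \eqref{eq:MarkingComplexity} and Lemma~\ref{lem:MarkedComplexity}, then control $\sum_{j<\ell}\rho_j^{-1/s}$ by a geometric series using the contraction of Theorem~\ref{thm:ConvConf} together with the efficiency/reliability equivalences, and finally invert. However, there is a genuine error in your geometric-sum step: you wrote the contraction comparison backwards. The contraction gives $e_\ell^2+\Cest\eta_\ell^2\leq\alpha^{\ell-j}\bigl(e_j^2+\Cest\eta_j^2\bigr)$, i.e.\ the \emph{later} quasi-error is the smaller one, so that $\rho_\ell\lapprox\alpha^{(\ell-j)/2}\rho_j$ and hence
\begin{equation}
\rho_j^{-1/s}\lapprox\alpha^{(\ell-j)/(2s)}\,\rho_\ell^{-1/s},
\end{equation}
which makes $\sum_{j=0}^{\ell-1}\rho_j^{-1/s}\lapprox\rho_\ell^{-1/s}\sum_{i=1}^{\ell}\alpha^{i/(2s)}$ a convergent series with ratio $\alpha^{1/(2s)}<1$, bounded uniformly in $\ell$ by $\alpha^{1/(2s)}(1-\alpha^{1/(2s)})^{-1}$. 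You instead asserted $\rho_j\lapprox\alpha^{(\ell-j)/2}\rho_\ell$ and arrived at $\sum_i\alpha^{-i/(2s)}$, a series with ratio $\alpha^{-1/(2s)}>1$ whose partial sums grow like $\alpha^{-\ell/(2s)}$; the subsequent claim that this is still ``$\lapprox\rho_\ell^{-1/s}$ times a constant depending only on $\alpha$ and $s$'' is false, since $\alpha^{-\ell/(2s)}\rho_\ell^{-1/s}$ exceeds $\rho_\ell^{-1/s}$ by a factor that blows up with $\ell$ (it is comparable to $\alpha^{-\ell/(2s)}$ itself). As written, the argument does not close; with the inequality flipped it becomes exactly the paper's proof, which passes through $e_j^2+\Cest\eta_j^2\leq(1+\Cest/\Ceff)\rho_j^2$ (efficiency plus domination of $\osc_j$ by $\eta_j$) to get $\rho_j^{-1/s}\leq\alpha^{(\ell-j)/s}\bigl(1+\Cest/\Ceff\bigr)^{1/(2s)}\bigl(e_\ell^2+\Cest\eta_\ell^2\bigr)^{-1/(2s)}$ before summing.

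Two smaller remarks. First, your worry about $1/(1-\alpha^{1/(2s)})$ is directed at the wrong series (yours, with ratio $>1$); in the correct version the constant $S(\theta,s)=\alpha^{1/s}(1-\alpha^{1/s})^{-1}$ does depend on $\theta$ through $\alpha\approx1-c\theta^2$ (Remark~\ref{rem:ConvergenceAlpha}), and the paper's own proof is itself incomplete in tracking how this folds into the advertised $\Phi(\theta)=(1-\theta^2/\theta_\ast^2)^{-1/2}$, so your caution there is fair but not the decisive issue. Second, you correctly used $\#\scr{M}_j\lapprox(1-\theta^2/\theta_\ast^2)^{-1/(2s)}\semi{\u}{\bb{A}_s}^{1/s}\rho_j^{-1/s}$, i.e.\ with the positive power of $\semi{\u}{\bb{A}_s}$, which is the intended reading of Lemma~\ref{lem:MarkedComplexity} despite the sign typos in its statement.
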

\begin{proof}
Let $\theta<\theta_\ast$ be given and assume that $u\in\bb{A}^s(\rho)$. 
We will show that the adaptive procedure $\ms{AFEM}$ will produce a sequence $\{(\mesh_\ell,\X_\ell,\U_\ell)\}_{\ell\ge0}$ such that $\rho_\ell\lapprox(\#\mesh_\ell-\#\mesh_0)^{-s}$.
Let $A(\theta,s):=(1-{\theta^2}/{\theta_\ast^2})^{-{1}/{2s}}|u|_{\bb{A}^s}^{-{1}/{s}}$
Cardinality of Marked Cells \eqref{eq:lem:MarkedComplexity} and \eqref{eq:MarkingComplexity} yields
\[
\#\mesh_\ell-\#\mesh_0\lapprox A(\theta,s)\sum_{j=0}^{\ell-1}\rho_j^{-{1}/{s}}.
\]
In view of Convergence Theorem \ref{thm:ConvConf}, we have for a factor $\Cest>0$ and a contractive factor $\alpha\in(0,1)$
\[
e_\ell^2+\Cest\eta_\ell^2\leq\alpha^{2(\ell-j)}\left(e_j^2+\Cest\eta^2_j\right),\quad j=1,..,\ell-1,
\]
holding for any iteration $\ell\ge0$.
At each intermediate step, the Efficiency Estimate \eqref{eq:result:lem:EstimatorEfficiency} makes $e_j^2+\gamma\eta^2_j\leq\textstyle\left(1+{\Cest}/{C_\rm{eff}}\right)\rho_j^2$ so we may write
\begin{equation}
\label{eq:rhoj}
\rho_j^{-\frac{1}{s}}\leq\alpha^{\frac{\ell-j}{s}}\textstyle\left(1+\frac{\Cest}{C_\rm{eff}}\right)^{\frac{1}{2s}}\left(e_\ell^2+\Cest\eta_\ell^2\right)^{-\frac{1}{2s}}.
\end{equation}
We sum \eqref{eq:rhoj} over $j=0:\ell-1$ and we recover the total-error from the quasi-error using estimator domination over oscillation,
\[
\sum_{j=0}^{\ell-1}\rho_j^{-\frac{1}{s}}\leq\sum_{j=0}^{\ell-1}\alpha^{\frac{\ell-j}{s}}\textstyle\left(1+\frac{\Cest}{\Ceff}\right)^{\frac{1}{2s}}\left(e_\ell^2+\Cest\osc_\ell^2\right)^{-\frac{1}{2s}}.
\]
We obtain
\[
\#\mesh_\ell-\#\mesh_0\lapprox M(\theta,s)\left(e_\ell^2+\Cest\osc_\ell^2\right)^{-\frac{1}{2s}}\sum_{j=1}^{\ell}\alpha^{\frac{j}{s}}
\]
where $M(\theta,s)= A(\theta,s)\left(1+\frac{\Cest}{\Ceff}\right)^{\frac{1}{2s}}$ and $\sum_{j=1}^{\ell}\alpha^{\frac{j}{s}}\leq\alpha^{1/s}(1-\alpha^{1/s})^{-1}=:S(\theta,s)$ for any $\ell\ge1$.
\[
\#\mesh_\ell-\#\mesh_0\lapprox S(\theta,s)M(\theta,s)\rho_\ell(\u,\U_\ell,f)^{-\frac{1}{s}}
\]
From Remark \ref{rem:ConvergenceAlpha}
\begin{equation}
\frac{\alpha^{1/s}}{1-\alpha^{1/s}}\leq
\end{equation}
\end{proof}
\section{Characterization of approximation classes}
In this section we characterize the approximation classes of the previous section. Namely, we will express $\cal{A}^s$, $\cal{O}^s$ and $\bb{A}^s$ in terms of Besov smoothness spaces. Let $m\ge1$ be an integer and $h>0$, we define the $m$-th order forward difference operator $\Lap_h^m$ recursively via
\begin{equation}
\Lap_h^m:=\Lap_h[\Lap_h^{m-1}],\quad\Lap_h=T_n-I,\quad T_hf(t)=f(t+h).
\end{equation}
For $G\subset\bb{R}^d$ convex with $\rm{diam}\,G=1$, we defind the Besov space via modulus the of smoothness 
\begin{equation}
\omega_m(f,t)_p:=\sup_{h\leq t}\norm{\Delta_h^mf}{L^p(G_{mh})},\quad G_{mh}=\left\{x\in G:[x+mh]^d\subset G\right\},
\end{equation}
and $\omega_m(f,t)_p:=0$ for values $t$ such that $x+mh\not\in G$.
Note that if $f\in\bb{P}_{m-1}(G)$ then $\omega_m(f,t)_p=0$, moreover if $\omega_m(f,t)_p=o(t^m)$ then necessarily we have $f\in\bb{P}_{m-1}(G)$.
For values $\alpha>0$, $0<q,p\leq\infty$ we characterize Besov spaces $\scr{B}_{q,p;m}^\alpha:=\scr{B}_{q;m}^\alpha(L^p(G))$ in terms of $\omega_m(f,t)_p$:
\begin{equation}
\scr{B}_{q,p;m}^\alpha(G)=\{f\in L^p(G):\semi{v}{\scr{B}_{q,p;m}^\alpha(G)}<\infty\}
\end{equation}
where the semi-norm reads
\begin{equation}
\semi{f}{\scr{B}_{q,p;m}^\alpha(G)}=\norm{t\mapsto t^{-\alpha-1/q}\omega_m(f,t)_p}{L^{q}(0,\infty)}
\end{equation}
Note that if $m\ge\alpha-\max\{0,1/q-1\}$ then different choices of $m$ with result in quasi-norms $\semi{\cdot}{\scr{B}^\alpha_{q,p;m}}$ that are equivalent to each other. On the other hand if $m<\alpha-\max\{0,1/q-1\}$ then the Besov space $\scr{B}_{q,p;m}^\alpha$ is a polynomial space of degree $m-1$. 
We will need some tools for the following analysis. Let $G\subset\bb{R}^2$
We will make use of the Whitney-type estimate: $0<p\leq\infty$
\begin{equation}
\inf_{\pi\in\bb{P}_{r}}\norm{f-\pi}{L^p(G)}\lapprox\omega_{r+1}(f,\rm{diam}\,G)_p\quad\forall f\in L^p(G)
\end{equation}
We have the $\omega_{r+1}(f,\rm{diam}\,G)_p\lapprox\semi{f}{\scr{B}_{p,p}^r}$ and whenever $\alpha<r$ we also have the continuous embedding $\scr{B}_{p,p}^r\hookrightarrow\scr{B}^\alpha_{p,p}$. We have
\begin{equation}\label{eq:Whitney}
\inf_{\pi\in\bb{P}_{r}}\norm{f-\pi}{L^p(G)}\lapprox\semi{f}{\scr{B}_{p,p}^\alpha}\quad\forall f\in \scr{B}_{p,p}^\alpha
\end{equation}
Let $H^0:=L^2$ and for $\beta>0$ let $H^\beta:=W^\beta_2$. We have the embedding for $\alpha>0$ and $0<p\leq\infty$
\begin{equation}\label{eq:Embeding}
\scr{B}_{p,p}^\alpha(G)\hookrightarrow H^\beta(G)
\quad\text{if}\quad\alpha-\beta>\textstyle2\left(\frac{1}{p}-\frac{1}{2}\right).
\end{equation}
Following result is due to Binev \cite{binev2002approximation} which we include for completeness.
\begin{lemma}\label{lem:AdaptiveProc}
Let $\v\in\scr{B}_{p,p}^\alpha(\Omega)$ for  $\alpha\ge0$, $0<p<\infty$ and let $\delta>0$.
\begin{equation}
e(\tau,\mesh)=|\face|^{\delta}\semi{v}{\scr{B}_{p,p}^\alpha(\omega)},\quad\omega=\tau\ \text{or}\ \omega_\tau.
\end{equation}
Given any $\eps>0$, the adaptive procedure
\begin {algorithmic}
\State $\scr{M}_\ell\leftarrow\{\face\in \mesh_\ell:e(\face,\mesh_\ell)>\eps\}$
\While {$\scr{M}_\ell\neq\emptyset$}
\State $\mesh_{\ell+1}\leftarrow\textbf{REFINE}(\mesh_\ell,\scr{M}_\ell)$
\State $\scr{M}_{\ell+1}\leftarrow\{\face\in\mesh_{\ell+1}:e(\face,\mesh_{\ell+1})>\eps\}$
\EndWhile.
\end{algorithmic}
we will terminates in finite steps and produces an admissible partition $\mesh\in\scr{P}$ for which
\begin{equation}\label{eq:lem:AdaptiveProc}
\sum_{\face\in\mesh}e(\face,\mesh)^2\lapprox\#\mesh\eps^2
\quad\text{and}\quad
\#\mesh-\#\mesh_0\lapprox\semi{v}{\scr{B}_{p,p}^{\alpha}(\Omega)}^{p/(1+\delta p)}\eps^{-p/(1+\delta p)}
\end{equation}
\end{lemma}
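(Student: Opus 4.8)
The plan is to run a Binev-type counting argument on the dyadic refinement tree, coupled with the \textbf{REFINE} complexity bound \eqref{eq:MarkingComplexity}; throughout write $V:=\semi{v}{\scr{B}_{p,p}^\alpha(\Omega)}$.

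\emph{Termination and the first estimate.} First I would observe that whenever a cell $\face$ is marked one has $|\face|^\delta\semi{v}{\scr{B}_{p,p}^\alpha(\omega)}>\eps$ with $\omega=\face$ or $\omega=\omega_\face$, and that the restriction of the Besov seminorm to any subdomain is controlled by $V$ (up to the finite-overlap constant $\cshape$ of \eqref{eq:sr} when $\omega=\omega_\face$); hence $|\face|$ is bounded below by a positive quantity depending only on $\eps$ and $V$, so the dyadic level of any marked cell is at most some finite $J_{\max}=J_{\max}(\eps,V)$. Since \textbf{REFINE} produces a quadtree, every dyadic cell is created at most once and, once subdivided, never reappears; as $\Omega$ is bounded there are only finitely many dyadic cells of level $\le J_{\max}$, so only finitely many markings can occur and the \textbf{while} loop exits after finitely many passes, returning an admissible $\mesh\in\scr{P}$. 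Upon exit every $\face\in\mesh$ satisfies $e(\face,\mesh)\le\eps$, whence $\sum_{\face\in\mesh}e(\face,\mesh)^2\le(\#\mesh)\,\eps^2$, the first assertion of \eqref{eq:lem:AdaptiveProc}.

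\emph{Reduction to a level-by-level count.} By \eqref{eq:MarkingComplexity} one has $\#\mesh-\#\mesh_0\le\Lambda\sum_\ell\#\scr{M}_\ell$, and since the quadtree observation above shows each cell is marked at most once, $\sum_\ell\#\scr{M}_\ell=\#\marked_\infty$ where $\marked_\infty$ collects all cells ever marked. Stratifying by dyadic level, $\marked_\infty=\bigcup_{j\ge0}\marked_\infty^j$ with $N_j:=\#\marked_\infty^j$, the task becomes to show $\sum_j N_j\lapprox V^{p/(1+\delta p)}\,\eps^{-p/(1+\delta p)}$. A level-$j$ cell has $|\face|\sim 4^{-j}$, so a marked such cell obeys $\semi{v}{\scr{B}_{p,p}^\alpha(\omega_\face)}^p\gtrsim\eps^p 4^{\delta p j}$; summing over $\marked_\infty^j$ and invoking sub-additivity of the $p$-th powers of the local Besov seminorms over a family of level-$j$ patches of uniformly bounded overlap — the standard property of the scale $\scr{B}_{p,p}^\alpha$, here combined with \eqref{eq:sr} — gives $\sum_{\face\in\marked_\infty^j}\semi{v}{\scr{B}_{p,p}^\alpha(\omega_\face)}^p\lapprox V^p$, hence the analytic bound $N_j\lapprox V^p\,\eps^{-p}\,4^{-\delta p j}$. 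On the other hand the number of level-$j$ cells ever created is at most $4^j\#\mesh_0$, giving the combinatorial bound $N_j\le 4^j\#\mesh_0$. Splitting $\sum_j N_j$ at the level $J$ where the two bounds balance, i.e. $4^J\sim(V^p\eps^{-p}/\#\mesh_0)^{1/(1+\delta p)}$, and summing the two resulting geometric tails yields $\sum_j N_j\lapprox(\#\mesh_0)^{\delta p/(1+\delta p)}\,V^{p/(1+\delta p)}\,\eps^{-p/(1+\delta p)}$; absorbing the fixed factor $\#\mesh_0$ and combining with the reduction above gives the second assertion of \eqref{eq:lem:AdaptiveProc}.

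\emph{Main obstacle.} The delicate point is the analytic sub-additivity $\sum_{\face\in\marked_\infty^j}\semi{v}{\scr{B}_{p,p}^\alpha(\omega_\face)}^p\lapprox V^p$. To establish it I would pass from the $\sup$-based modulus of smoothness in the definition to the equivalent integrated modulus, for which $p$-th powers are genuinely additive over disjoint sets — the boundary layers being absorbed by the shrinking sets $G_{mh}$ — and then pay the finite-intersection constant $\cshape$ of \eqref{eq:sr} for the overlap of the support extensions $\omega_\face$; the rescaling between the small cells and the unit reference configuration ($\diam=1$) built into the definition must also be tracked here. Everything else is the routine Binev bookkeeping together with the already-established \textbf{REFINE} complexity bound.
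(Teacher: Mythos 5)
Your proposal is correct and follows essentially the same route as the paper: stratify the marked cells by dyadic level, pair the analytic bound coming from the marking threshold (with finite-overlap sub-additivity of the local Besov seminorms, paid for with $\cshape$) against the trivial volume/combinatorial bound, balance the two geometric sums at the crossover level, and conclude with the \textbf{REFINE} complexity estimate \eqref{eq:MarkingComplexity}. The only differences are cosmetic — the paper bins the refined cells by area $2^{-j-1}\leq|\tau|\leq 2^{-j}$ rather than by tree level and carries $|\Omega|$ where you carry $\#\mesh_0$ — and your explicit handling of termination and of the seminorm sub-additivity is, if anything, more careful than the paper's one-line appeal to shape-regularity.
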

\begin{proof}
With each refinement step, foe error quantities $e(\tau_\ast,\mesh_\ell)$ exceeding $\eps>0$, $|\tau|$ will reduce by a factor $1/4$ and $e(\rm{child}(\tau_\ast),\mesh_{\ell+1})\leq 4^{-\delta}e(\tau_\ast,\mesh_{\ell+1})$. We will have $\scr{M}_\ell=\emptyset$ after a finite number of steps $L$; set $\mesh:=\mesh_L$ we obtain the first relation in \eqref{eq:lem:AdaptiveProc}.
We estimate the cardinality of the resulting partition $\mesh$. Let $R_\ell\subset\mesh_\ell$ be the set of refined cells and put $\cal{R}=\cup_{\ell=0}^LR_\ell$. Let $\Gamma_j=\{\tau\in\cal{R}:2^{-j-1}\leq|\tau|\leq2^{-j}\}$ and let $m_j=\#\Lambda_j$.
First of all, there can be at most $2^{j+1}|\Omega|$ disjoint $\tau$ of size $>2^{-j-1}$ which makes $m_j\leq2^{j+1}|\Omega|$ which gives us one upper bound on $m_j$.
We obtain a second upper bound in the following manner. Let $\tau\in\Gamma_j$, then
\[
e(\tau,P)=|\tau|^{\delta}\semi{v}{\scr{B}_{p,p}^{\alpha}(\omega)}<2^{-j\delta}\semi{v}{\scr{B}_{p,p}^{\alpha}(\omega)}
\]
and
\[
m_j\eps^p<\sum_{\tau\in\Gamma_j}e(\tau,\mesh)^p<2^{-jp\delta}\sum_{\tau\in\Gamma_j}\semi{v}{\scr{B}_{p,p}^{\alpha}(\omega_\tau)}^p
\lapprox2^{-jp\delta}\semi{v}{\scr{B}_{p,p}^{\alpha}(\Omega)}^p
\]
by shape-regularity. We therefore obtain $m_j\lapprox 2^{-jp\delta}\semi{v}{\scr{B}_{p,p}^{\alpha}(\Omega)}^p\eps^{-p}$.
Let $j_0$ be the smallest integer for which $|\Omega|<2^{j_0}$. Then if $\scr{M}:=\cup_{\ell=0}^L\scr{M}_\ell$
\[
\#\scr{M}\leq\sum_{j=-j_0}^\infty\#m_j
\lapprox\sum_{j=-j_0}^\infty\min\{2^{j}|\Omega|,2^{-jp\delta}\semi{v}{\scr{B}_{p,p}^{\alpha}(\Omega)}^p\eps^{-p}\}
\]
If $k$ is biggest integer for which $2^k|\Omega|\leq2^{-kp\delta}\semi{v}{\scr{B}_{p,p}^{\alpha}(\Omega)}^p\eps^{-p}$, then
\[
\begin{split}
\sum_{j=-j_0}^\infty\min\{2^{j}|\Omega|,2^{-jp\delta}\semi{v}{\scr{B}_{p,p}^{\alpha}(\Omega)}^p\eps^{-p}\}
&=
|\Omega|\sum_{j=-j_0}^k2^j+\semi{v}{\scr{B}_{p,p}^{\alpha}(\Omega)}^p\eps^{-p}\sum_{j=k+1}^\infty2^{-jp\delta}\\
\end{split}
\]
Observe that $\sum_{j=-j_0}^k2^j\lapprox 2^k$, $\sum_{j=k+1}^\infty2^{-jp\delta}\lapprox 2^{-kp\delta}$ and $2^{k(1+p\delta)}\leq|\Omega|^{-1}\semi{v}{\scr{B}_{p,p}^{\alpha}(\Omega)}^p\eps^{-p}$ which makes
\[
\begin{split}
\#\mesh-\#\mesh_0\lapprox\#\scr{M}&\lapprox 2^{-kp\delta}\semi{v}{\scr{B}_{p,p}^{\alpha}(\Omega)}^p\eps^{-p}
\leq\left(|\Omega|^\delta\semi{v}{\scr{B}_{p,p}^{\alpha}(\Omega)}\eps^{-1}\right)^{p/(1+\delta p)}
\end{split}
\]
where we invoked \eqref{eq:MarkingComplexity}.
\end{proof}
\begin{theorem}\label{thm:PreDirect}
We have $\scr{B}^{2+\alpha}_{p,p}(\Omega):=\scr{B}^{2+\alpha}_{p,p;r+1}(\Omega)\subset\cal{A}^s$ with $s=\frac{\alpha}{2}$ for values $\alpha< r-1+\max\{0,1/p-1\}$ and $\frac{\alpha}{2}\ge\frac{1}{p}-\frac{1}{2}$ and $0<p<\infty$. 
\end{theorem}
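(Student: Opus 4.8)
The plan is to realize the embedding $\scr{B}^{2+\alpha}_{p,p}(\Omega)\subset\cal{A}^s$ with $s=\alpha/2$ by running the Binev-type adaptive procedure of Lemma \ref{lem:AdaptiveProc} on a local error functional tailored to the energy norm, and then bootstrapping its output to the definition of $\cal{A}^s$. First I would fix $\v\in\scr{B}^{2+\alpha}_{p,p}(\Omega)$ and, on each cell $\face$, apply the Whitney-type estimate \eqref{eq:Whitney} to the second derivatives $D^2\v$: since $D^2\v\in\scr{B}^{\alpha}_{p,p}(\Omega)$, on each cell there is a polynomial $\pi_\face\in\bb{P}_{r-2}(\face)$ with $\norm{D^2\v-\pi_\face}{L^p(\face)}\lapprox h_\face^{\alpha+2(1/p-1/2)}\semi{D^2\v}{\scr{B}^\alpha_{p,p}(\face)}$ after rescaling the Whitney inequality to a cell of diameter $h_\face$ and converting the $L^p$ norm to $L^2$ via H\"older on a square (this is where the constraint $\alpha/2\ge 1/p-1/2$ enters, guaranteeing a nonnegative power of $h_\face$). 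Antidifferentiating $\pi_\face$ produces a piecewise polynomial of degree $\le r$; the nontrivial point is to patch these cell-wise polynomials into a global conforming spline $\V\in\Xp\subset H^2_0(\Omega)$.

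The key steps, in order, would be: (i) define the local error indicator $e(\face,\mesh)=\semi{\v}{\scr{B}^{2+\alpha}_{p,p}(\omega_\face)}$ scaled by $h_\face$ to the appropriate power — concretely, setting $e(\face,\mesh)=|\face|^{\delta}\semi{\v}{\scr{B}^{2+\alpha}_{p,p}(\omega_\face)}$ with $\delta=\alpha/2+(1/p-1/2)\ge 0$, so that Lemma \ref{lem:AdaptiveProc} applies with exponent $p/(1+\delta p)$; (ii) invoke Lemma \ref{lem:AdaptiveProc} with threshold $\eps$ to obtain an admissible partition $\mesh$ with $\#\mesh-\#\mesh_0\lapprox\semi{\v}{\scr{B}^{2+\alpha}_{p,p}(\Omega)}^{p/(1+\delta p)}\eps^{-p/(1+\delta p)}$ and $\sum_\face e(\face,\mesh)^2\lapprox\#\mesh\,\eps^2$; (iii) on this partition, construct $\V\in\Xp$ by quasi-interpolation, $\V=\Ip\v$, and use the local estimate of Theorem \ref{thm:quasiinterpolant} together with the Whitney/Besov bound to control $\semi{\v-\V}{H^2(\face)}^2\lapprox h_\face^{2\alpha}\semi{D^2\v}{\scr{B}^\alpha_{p,p}(\omega_\face)}^2$ (again with the conversion from $L^p$ to $L^2$, matching the powers so the right side is $\lapprox e(\face,\mesh)^2$); (iv) sum over cells using the finite-overlap property \eqref{eq:sr} to get $\enorm{\v-\V}{}^2\lapprox\sum_\face e(\face,\mesh)^2\lapprox\#\mesh\,\eps^2$; (v) eliminate $\eps$: from step (ii), $\eps\lapprox\semi{\v}{\scr{B}^{2+\alpha}_{p,p}(\Omega)}\,(\#\mesh-\#\mesh_0)^{-(1+\delta p)/p}$, and substituting into $\enorm{\v-\V}{}^2\lapprox\#\mesh\,\eps^2$ yields, after checking that $(1+\delta p)/p$ combines with the extra factor $\#\mesh$ to give exactly the exponent $s=\alpha/2$, the bound $\enorm{\v-\V}{}\lapprox N^{-\alpha/2}\semi{\v}{\scr{B}^{2+\alpha}_{p,p}(\Omega)}$ with $N=\#\mesh$, which is precisely membership in $\cal{A}^{\alpha/2}$.

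The main obstacle I anticipate is step (iii)–(iv): making the local approximation estimate genuinely sharp on the support extension $\omega_\face$ rather than just on $\face$, and tracking the $L^p\to L^2$ conversion uniformly so that the exponents balance exactly to $s=\alpha/2$. The quasi-interpolant $\Ip$ only reproduces splines, not cell-wise polynomials, so one cannot directly feed in the Whitney polynomial $\pi_\face$; instead one estimates $\semi{\v-\Ip\v}{H^2(\face)}\le\semi{\v-\pi}{H^2(\omega_\face)}+\semi{\Ip(\v-\pi)}{H^2(\face)}$ for a single polynomial $\pi$ defined on the patch $\omega_\face$ of bounded diameter $\lapprox h_\face$, then applies the stability bound $\norm{\Ip w}{L^2(\face)}\le c_\mathrm{shape}\norm{w}{L^2(\omega_\face)}$ from Theorem \ref{thm:quasiinterpolant} together with the inverse inequality \eqref{eq:inve:lem:ie} to move from $L^2$ control to the $H^2$ seminorm at the cost of $h_\face^{-2}$. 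The delicate accounting is that this $h_\face^{-2}$ must be absorbed by the $h_\face^2$ gained from approximating second derivatives; the condition $\alpha< r-1+\max\{0,1/p-1\}$ ensures the Whitney estimate is non-saturating (so the Besov seminorm on the right is finite and the polynomial degree $r+1$ in $\scr{B}^{2+\alpha}_{p,p;r+1}$ is admissible), while $\alpha/2\ge 1/p-1/2$ ensures the scaling exponent $\delta$ is nonnegative so that Lemma \ref{lem:AdaptiveProc} can be invoked. Once these bookkeeping constraints are verified the rest is the routine geometric-series and exponent-matching computation already used in Lemma \ref{lem:MarkedComplexity}.
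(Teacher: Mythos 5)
Your plan follows the paper's own route: bound $\semi{\v-\Ip\v}{H^2(\tau)}$ by $\semi{\v-\pi}{H^2(\omega_\tau)}$ for a single patchwise polynomial $\pi\in\bb{P}_r(\omega_\tau)$ using the stability of $\Ip$, convert to a Besov quantity by rescaling to a reference patch and invoking Whitney \eqref{eq:Whitney}, feed the local indicator $e(\tau,\mesh)=|\tau|^{\delta}\semi{\v}{\scr{B}^{2+\alpha}_{p,p}(\omega_\tau)}$ into Lemma \ref{lem:AdaptiveProc}, and match exponents (your opening detour through Whitney for $D^2\v$ plus antidifferentiation and patching is unnecessary and is in fact abandoned by you in step (iii)). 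The genuine gap is the value of $\delta$. The correct scaling is obtained as follows: writing $\omega_\tau=T(G)$, $\hat\v=\v\circ T$, in two dimensions the $H^2$ seminorm scales as $\semi{\hat\v}{H^2(G)}\approx h_\tau\,\semi{\v}{H^2(\omega_\tau)}$ while $\semi{\hat\v}{\scr{B}^{2+\alpha}_{p,p}(G)}\approx h_\tau^{2+\alpha-2/p}\semi{\v}{\scr{B}^{2+\alpha}_{p,p}(\omega_\tau)}$; combining the embedding \eqref{eq:Embeding} on $G$ (this is exactly where the hypothesis $\alpha/2\ge 1/p-1/2$ is used) with \eqref{eq:Whitney} gives $\inf_{\pi}\semi{\v-\pi}{H^2(\omega_\tau)}\lapprox h_\tau^{\alpha+1-2/p}\semi{\v}{\scr{B}^{2+\alpha}_{p,p}(\omega_\tau)}$, i.e.\ $\delta=\frac{\alpha+1}{2}-\frac1p=\frac{\alpha}{2}-\bigl(\frac1p-\frac12\bigr)$, whereas you set $\delta=\frac{\alpha}{2}+\bigl(\frac1p-\frac12\bigr)$. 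The sign error comes from your ``$L^p\to L^2$ conversion via H\"older,'' which goes the wrong way: passing from $L^p$-based Besov smoothness to the $L^2$-based $H^2$ seminorm costs $2(\frac1p-\frac12)$ orders of smoothness through the embedding; it does not gain a factor $|\tau|^{1/p-1/2}$.

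This is not a cosmetic slip, because the final rate produced by your steps (ii)--(v) is $s=\delta+\frac1p-\frac12$: with the correct $\delta$ this is exactly $\frac{\alpha}{2}$ (and $p/(1+\delta p)=2/(\alpha+1)$, as the paper uses when choosing $\eps=N^{-(\alpha+1)/2}\semi{\v}{\scr{B}^{2+\alpha}_{p,p}(\Omega)}$), but with your $\delta$ it is $\frac{\alpha}{2}+2\bigl(\frac1p-\frac12\bigr)$, which equals $\frac{\alpha}{2}$ only when $p=2$. Concretely, for $p<2$ your local estimate in step (iii) (and the displayed bound $\semi{\v-\V}{H^2(\tau)}^2\lapprox h_\tau^{2\alpha}\semi{D^2\v}{\scr{B}^{\alpha}_{p,p}(\omega_\tau)}^2$, which is the $p=2$ case) is strictly stronger than what the scaling argument yields and is not justified; for $p>2$ the local estimate holds but the exponent matching in step (v) delivers a rate worse than $\frac{\alpha}{2}$, so the theorem is not obtained. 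Correcting $\delta$ to $\frac{\alpha+1}{2}-\frac1p$, and attributing the hypothesis $\frac{\alpha}{2}\ge\frac1p-\frac12$ to the embedding $\scr{B}^{2+\alpha}_{p,p}(G)\hookrightarrow H^2(G)$ (equivalently $\delta\ge0$) rather than to a H\"older step, repairs the argument and brings it into line with the paper's proof.
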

\begin{proof}
Let $\pi\in\bb{P}_r(\omega_\tau)$.
\begin{equation}
\begin{split}
\semi{v-\Ip v}{H^2(\face)}&\leq\semi{v-\pi}{H^2(\face)}+\semi{\Ip(\pi-v)}{H^2(\face)}\\
&\leq\semi{v-\pi}{H^2(\face)}+\cshape\semi{\pi-v}{H^2(\omega_\face)}
\lapprox \cshape\semi{\v-\pi}{H^2(\omega_\tau)}.
\end{split}
\end{equation}
Let $\omega_\face=T(G)$ and $\hat{v}=v\circ T$. For $\alpha< r-1+\max\{0,1/p-1\}$ we have nontrivial Besov spaces $\scr{B}_{p,p}^{2+\alpha}(G)$ and if $\frac{1}{p}\leq\frac{\alpha+1}{2}$ we have the continuous embedding $\scr{B}_{p,p}^{2+\alpha}(G)\hookrightarrow H^2(G)$. Together with the facts $\semi{\hat{\v}}{\scr{B}_{p,p}^{2+\alpha}(G)}\approx h_\face^{2+\alpha-2/p}\semi{\v}{\scr{B}_{p,p}^{2+\alpha}(\omega_\face)}$ and $\semi{\hat{\pi}}{\scr{B}^{2+\alpha}_{p,p}(G)}=0$ we arrive at
\begin{equation}
h_\face\semi{v-\pi}{H^2(\omega_\face)}\approx\semi{\hat{v}-\hat{\pi}}{H^2(G)}\lapprox\norm{\hat{v}-\hat{\pi}}{L^p(G)}+\semi{\hat{v}}{\scr{B}^{2+\alpha}_{p,p}(G)}
\end{equation}
Invoking \eqref{eq:Whitney},
\begin{equation}
\inf_{\pi\in\bb{P}_r(\omega_\tau)}h_\face\semi{v-\pi}{H^2(\omega_\face)}\lapprox\semi{\hat{v}}{\scr{B}^{2+\alpha}_{p,p}(G)}
\approx h_\tau^{2+\alpha-2/p}\semi{\v}{\scr{B}_{p,p}^{2+\alpha}(\omega_\face)}
\end{equation}
we obtain 
\begin{equation}
\inf_{\pi\in\bb{P}_r(\omega_\tau)}\semi{v-\pi}{H^2(\omega_\face)}
\lapprox h_\face^{\alpha+1-2/p}\semi{v}{\scr{B}_{p,p}^{2+\alpha}(\omega_\tau)}
\approx|\face|^\delta\semi{v}{\scr{B}_{p,p}^{2+\alpha}(\omega_\face)}
\end{equation}
with $\delta:=\frac{\alpha+1}{2}-\frac{1}{p}>0$. 
We have the local estimate
\begin{equation}
\forall\face\in\mesh,\quad \semi{v-\Ip v}{H^2(\face)}
\lapprox c_\rm{shape}|\face|^{\delta}\semi{v}{\scr{B}_{p,p}^{2+\alpha}(\omega_\tau)}
\end{equation}
and
\begin{equation}
\begin{split}
\enorm{\v-\Ip\v}{}^2&=\sum_{\tau\in\mesh}\semi{\v-\Ip\v}{H^2(\tau)}^2
\lapprox\sum_{\face\in\mesh}e(\face,\mesh)^2
\end{split}
\end{equation}
we have in view of Lemma \ref{lem:AdaptiveProc} with $\omega=\omega_\tau$, there exists an admissible mesh $\mesh\in\scr{P}$ such that
\begin{equation}\label{eq:thm:PreDirect}
\enorm{v-\Ip v}{}^2\lapprox\#\mesh\eps^2
\quad\text{with}\quad\#\mesh-\#\mesh_0\lapprox \semi{v}{\scr{B}_{p,p}^{2+\alpha}}^{{p/(1+\delta p)}}\eps^{-p/(1+\delta p)}
\end{equation}
Noting that $p\delta=p(\alpha+1)/2-1$ so $p/(1+\delta p)=2/(\alpha+1)$
let $N=\#\mesh$ and let $\eps=N^{-(\alpha+1)/2}\semi{v}{\scr{B}_{p,p}^{2+\alpha}(\Omega)}$ then
\[
\enorm{v-\Ip v}{}\lapprox \semi{v}{\scr{B}_{p,p}^{2+\alpha}(\Omega)}N^{-\alpha/2}
\quad\text{and}\quad
\#\mesh-\#\mesh_0\lapprox N.
\]
Let $s=\frac{\alpha}{2}$ then
\[
\semi{\v}{\cal{A}^s}=\sup_{N>0}N^s\inf_{\mesh\in\scr{P}_N}\inf_{\V\in\Xp}\enorm{\u-\V}{}\leq \sup_{N>0}N^s\enorm{\v-\Ip\v}{}\lapprox\semi{\v}{\scr{B}_{p,p}^{2+\alpha}(\Omega)}<\infty.
\]
\end{proof}

\begin{theorem}\label{thm:PreDirectOscilation}
We have $\scr{B}^{\alpha}_{p,p}(\Omega):=\scr{B}^{\alpha}_{p,p;r-3}(\Omega)\subset\cal{O}^s$ with $s=\frac{\alpha+1}{2}$ for values $\alpha<r-3+\max\{0,1/p-1\}$ and $\frac{\alpha}{2}\ge\frac{1}{p}-\frac{1}{2}$, $0<p<\infty$. 
\end{theorem}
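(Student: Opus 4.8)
The plan is to run the proof of Theorem~\ref{thm:PreDirect} with the interpolation error $\semi{\v-\Ip\v}{H^2(\face)}$ replaced by the cell oscillation $\osc_\mesh(\face):=h_\face^2\norm{f-\Pi f}{L^2(\face)}$, where $\Pi$ is the piecewise $L^2$-orthogonal projection onto $\bb{P}_{r-4}$ — the degree underlying the data term of Lemma~\ref{lem:EstimatorEfficiency}, since $\Lap^2\V$ is a polynomial of degree $r-4$ on each cell — and to exploit that $\norm{h_\mesh^2(f-\Pi f)}{L^2(\Omega)}^2=\sum_{\face\in\mesh}\osc_\mesh(\face)^2$. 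The three steps are: (i) a local bound $\osc_\mesh(\face)\lapprox|\face|^{\delta}\semi{f}{\scr{B}^\alpha_{p,p;r-3}(\face)}$ with $\delta=\frac{\alpha+3}{2}-\frac1p$; (ii) feeding the weight $e(\face,\mesh):=|\face|^{\delta}\semi{f}{\scr{B}^\alpha_{p,p;r-3}(\face)}$ into the Binev procedure of Lemma~\ref{lem:AdaptiveProc}, here used with $\omega=\face$ rather than $\omega=\omega_\face$, since the oscillation is genuinely cell-local and needs no support extension; (iii) balancing $\eps$ against the resulting cardinality exactly as in the closing lines of Theorem~\ref{thm:PreDirect}.

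For step (i) I would fix $\face\in\mesh$, write $\face=T(G)$ with $\diam G=1$, and set $\hat f=f\circ T$. Since the affine pullback carries $\bb{P}_{r-4}(\face)$ onto $\bb{P}_{r-4}(G)$ and the $L^2(\face)$-projection onto the $L^2(G)$-projection, one has $\norm{f-\Pi f}{L^2(\face)}=h_\face\inf_{\hat\pi\in\bb{P}_{r-4}(G)}\norm{\hat f-\hat\pi}{L^2(G)}$, the power $h_\face$ coming from $|\det DT|\approx h_\face^2$. Taking $\hat\pi\in\bb{P}_{r-4}(G)$ to be the best $L^p(G)$-approximant, I would use the continuous embedding $\scr{B}^\alpha_{p,p}(G)\hookrightarrow L^2(G)$ — immediate from $L^p(G)\hookrightarrow L^2(G)$ on the bounded domain $G$ when $p\ge2$, and for $p<2$ obtained by interposing the sharp Besov embedding $\scr{B}^\alpha_{p,p}(G)\hookrightarrow\scr{B}^{\alpha'}_{2,2}(G)\hookrightarrow L^2(G)$ with $\alpha'=\alpha-2\left(\frac1p-\frac12\right)$, where $\frac{\alpha}{2}\ge\frac1p-\frac12$ is precisely $\alpha'\ge0$ and the first embedding is the endpoint Sobolev-type embedding of Besov spaces (the indices obey $\alpha-\frac2p=\alpha'-1$ and $p\le2$) — together with the Whitney estimate \eqref{eq:Whitney} (with $\bb{P}_{r-4},\omega_{r-3}$ replacing $\bb{P}_r,\omega_{r+1}$) and the annihilation of $\bb{P}_{r-4}$ by $\omega_{r-3}$, to obtain
\begin{equation}
\inf_{\hat\pi\in\bb{P}_{r-4}(G)}\norm{\hat f-\hat\pi}{L^2(G)}\lapprox\norm{\hat f-\hat\pi}{\scr{B}^\alpha_{p,p}(G)}\lapprox\norm{\hat f-\hat\pi}{L^p(G)}+\semi{\hat f}{\scr{B}^\alpha_{p,p;r-3}(G)}\lapprox\semi{\hat f}{\scr{B}^\alpha_{p,p;r-3}(G)},
\end{equation}
where the final inequality requires $\scr{B}^\alpha_{p,p;r-3}$ to be nondegenerate, i.e.\ $\alpha<r-3+\max\{0,1/p-1\}$. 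Scaling back through $\semi{\hat f}{\scr{B}^\alpha_{p,p;r-3}(G)}\approx h_\face^{\alpha-2/p}\semi{f}{\scr{B}^\alpha_{p,p;r-3}(\face)}$ and multiplying by $h_\face^2$ yields $\osc_\mesh(\face)\lapprox h_\face^{\alpha+3-2/p}\semi{f}{\scr{B}^\alpha_{p,p;r-3}(\face)}\approx|\face|^{\delta}\semi{f}{\scr{B}^\alpha_{p,p;r-3}(\face)}$, and the same hypothesis forces $\delta\ge1>0$, so Lemma~\ref{lem:AdaptiveProc} applies.

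For steps (ii)--(iii): Lemma~\ref{lem:AdaptiveProc} furnishes, for each $\eps>0$, an admissible $\mesh\in\scr{P}$ with $\norm{h_\mesh^2(f-\Pi f)}{L^2(\Omega)}^2\lapprox\sum_{\face\in\mesh}e(\face,\mesh)^2\lapprox\#\mesh\,\eps^2$ and $\#\mesh-\#\mesh_0\lapprox\semi{f}{\scr{B}^\alpha_{p,p;r-3}(\Omega)}^{p/(1+\delta p)}\eps^{-p/(1+\delta p)}$; since $1+\delta p=\frac{p(\alpha+3)}{2}$, putting $N:=\#\mesh$ and choosing $\eps\approx\semi{f}{\scr{B}^\alpha_{p,p;r-3}(\Omega)}N^{-(\alpha+3)/2}$ keeps $\#\mesh-\#\mesh_0\lapprox N$ while $\norm{h_\mesh^2(f-\Pi f)}{L^2(\Omega)}\lapprox N^{1/2}\eps\approx\semi{f}{\scr{B}^\alpha_{p,p;r-3}(\Omega)}N^{-s}$, with the rate $s$ read off just as in the analogous balance $\eps=N^{-(\alpha+1)/2}\semi{v}{\scr{B}^{2+\alpha}_{p,p}(\Omega)}$ of Theorem~\ref{thm:PreDirect}; hence $\semi{f}{\cal{O}^s}\lapprox\semi{f}{\scr{B}^\alpha_{p,p;r-3}(\Omega)}<\infty$, that is, $\scr{B}^\alpha_{p,p;r-3}(\Omega)\subset\cal{O}^s$.

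The main obstacle is step (i) in the regime $p<2$: whereas Theorem~\ref{thm:PreDirect} compares the target $H^2$-seminorm with the source smoothness through the single embedding $\scr{B}^{2+\alpha}_{p,p}\hookrightarrow H^2$, here one must pass from $L^p$-based Besov smoothness of $f$ to an $L^2$-based local approximation error, which forces the use of the sharp Besov embedding $\scr{B}^\alpha_{p,p}\hookrightarrow\scr{B}^{\alpha'}_{2,2}$ and a careful verification that it remains valid up to and including the endpoint $\alpha=2/p-1$ singled out by $\frac{\alpha}{2}\ge\frac1p-\frac12$; once that is in place, the affine scaling of the Besov seminorm and the cardinality bookkeeping are routine and formally identical to those in Theorem~\ref{thm:PreDirect}.
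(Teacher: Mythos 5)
Your proposal follows essentially the same route as the paper's own proof: the local bound $\osc_\mesh(\face)\lapprox|\face|^{\delta}\semi{f}{\scr{B}^{\alpha}_{p,p;r-3}(\face)}$ with $\delta=\frac{\alpha+3}{2}-\frac1p$ via affine rescaling, the embedding $\scr{B}^{\alpha}_{p,p}(G)\hookrightarrow L^2(G)$ and the Whitney estimate for $\bb{P}_{r-4}$, followed by Lemma~\ref{lem:AdaptiveProc} with $\omega=\face$ and the balance $\eps\approx\semi{f}{\scr{B}^{\alpha}_{p,p;r-3}(\Omega)}N^{-(\alpha+3)/2}$. The only cosmetic point is that your $\ell^2$ bookkeeping actually yields $N^{1/2}\eps\approx N^{-(\alpha+2)/2}\semi{f}{\scr{B}^{\alpha}_{p,p;r-3}(\Omega)}$, which is stronger than, and hence implies, the stated rate $s=\frac{\alpha+1}{2}$; your explicit treatment of the endpoint case $\alpha=2/p-1$ for $p<2$ is a welcome refinement of the embedding step the paper only asserts.
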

\begin{proof}
Let $\pi\in\bb{P}_{r-4}(\omega_\tau)$.
\begin{equation}	
\begin{split}
\norm{f-\Pi f}{L^2(\face)}&\leq\norm{f-\pi}{L^2(\face)}+\norm{\Pi(\pi-f)}{L^2(\face)}\\
&\leq\norm{f-\pi}{L^2(\face)}+\norm{\pi-f}{H^2(\face)}
\lapprox\norm{f-\pi}{L^2(\tau)}.
\end{split}
\end{equation}
Let $\tau=T(G)$ and $\hat{f}=f\circ T$. For $\alpha<r-3+\max\{0,1/p-1\}$ we have nontrivial Besov spaces $\scr{B}_{p,p}^{\alpha}(G)$ and if $\frac{1}{p}\leq\frac{\alpha+1}{2}$ we have the continuous embedding $\scr{B}_{p,p}^{\alpha}(G)\hookrightarrow L^2(G)$. Together with the facts $\semi{\hat{f}}{\scr{B}_{p,p}^{\alpha}(G)}=h_\face^{\alpha-2/p}\semi{f}{\scr{B}_{p,p}^{\alpha}(\face)}$ and $\semi{\hat{\pi}}{\scr{B}^{\alpha}_{p,p}(G)}=0$ we arrive at
\begin{equation}
h^{-1}_\face\norm{f-\pi}{L^2(\face)}=\norm{\hat{f}-\hat{\pi}}{L^2(G)}\lapprox\norm{\hat{f}-\hat{\pi}}{L^p(G)}+\semi{\hat{f}}{\scr{B}^{\alpha}_{p,p}(G)}
\end{equation}
Invoking \eqref{eq:Whitney},
\begin{equation}
\inf_{\pi\in\bb{P}_{r-4}(\tau)}h^{-1}_\face\norm{f-\pi}{L^2(\face)}\lapprox\semi{\hat{f}}{\scr{B}^{\alpha}_{p,p}(G)}
= h_\tau^{\alpha-2/p}\semi{f}{\scr{B}_{p,p}^{\alpha}(\face)}
\end{equation}
we obtain 
\begin{equation}
\inf_{\pi\in\bb{P}_r(\tau)}\norm{f-\pi}{L^2(\face)}
\lapprox h_\face^{\alpha+1-2/p}\semi{f}{\scr{B}_{p,p}^{\alpha}(\tau)}
\end{equation}
We have 
\begin{equation}
\begin{split}
\osc_\mesh(f)&\approx\sum_{\tau\in\mesh}h_\tau^2\norm{f-\Pi f}{L^2(\tau)}\\
&\lapprox\sum_{\tau\in\mesh}h_\face^{\alpha+3-2/p}\semi{f}{\scr{B}_{p,p}^{\alpha}(\tau)}
\approx\sum_{\tau\in\mesh}|\tau|^\delta\semi{f}{\scr{B}_{p,p}^{\alpha}(\tau)}
\end{split}
\end{equation}
with $\delta:=\frac{\alpha+3}{2}-\frac{1}{p}$; let $e(\tau,\mesh)=|\tau|^\delta\semi{f}{\scr{B}_{p,p}^{\alpha}(\tau)}$.
we have in view of Lemma \ref{lem:AdaptiveProc} with $\omega=\tau$, there exists an admissible mesh $\mesh\in\scr{P}$ such that
\begin{equation}\label{eq:thm:PreDirect}
\osc_\mesh^2(f)\lapprox\#\mesh\eps^2
\quad\text{with}\quad\#\mesh-\#\mesh_0\lapprox \semi{v}{\scr{B}_{p,p}^{\alpha}}^{p/(1+\delta p)}\eps^{-p/(1+\delta p)}
\end{equation}
Noting that $p\delta=p(\alpha+3)/2-1$ so $p/(1+\delta p)=2/(\alpha+3)$,
let $N=\#\mesh$ and let $\eps=N^{-(\alpha+3)/2}\semi{v}{\scr{B}_{p,p}^{\alpha}(\Omega)}$ then
\[
\osc_\mesh(f)\lapprox \semi{f}{\scr{B}_{p,p}^\alpha(\Omega)}N^{-(\alpha+1)/2}
\quad\text{and}\quad
\#\mesh-\#\mesh_0\lapprox N.
\]
Let $s=\frac{\alpha+1}{2}$ then $\semi{f}{\cal{O}^s}<\infty$.
\end{proof}
The previous two results in combination with Lemma \ref{lem:EquivClasses} yields a one-sided characterization of the AFEM approximation class $\bb{A}^s$:
\begin{corollary}[One-sided characterization for $\bb{A}^s$]
Let $\u$ be the weak solution to \eqref{eq:cwp}. If $\u\in(\scr{B}_{p,p;r+1}^{2s+2}(\Omega)\cap H^2_0(\Omega))$ with $1/p-1/2\leq2s<r-1+\max\{0,1/p-1\}$ for some $0<p<\infty$ and $\cal{L}\u\in(\scr{B}^{2s-1}_{q,q;r-3}(\Omega)\cap L^2(\Omega))$ with $1/q-1/2\leq2s<r-3+\max\{0,1/q-1\}$ for some $0<q<\infty$, then $u\in\bb{A}^s$.
\end{corollary}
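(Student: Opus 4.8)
The plan is to read the corollary as a direct combination of three results already in hand: Theorem~\ref{thm:PreDirect} (a Jackson-type estimate placing the solution in $\cal{A}^s$), Theorem~\ref{thm:PreDirectOscilation} (the corresponding estimate placing the data in $\cal{O}^s$), and the class-equivalence Lemma~\ref{lem:EquivClasses}. Since $\bb{A}^s$ is, by construction, the class obtained from $\cal{A}^s$ and $\cal{O}^s$ via the overlay argument of Lemma~\ref{lem:EquivClasses}, the proof reduces entirely to checking the two hypotheses of that lemma.

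First I would dispatch the solution component. Setting $\alpha:=2s$, so that the Besov smoothness $2+\alpha=2s+2$ and the target rate $s=\alpha/2$ agree with those in Theorem~\ref{thm:PreDirect}, the assumption $\u\in\scr{B}^{2s+2}_{p,p;r+1}(\Omega)\cap H^2_0(\Omega)$ together with the stated range on $2s$ puts us inside the scope of that theorem and yields $\u\in\cal{A}^s$. Next I would do the same for the right-hand side: with $\alpha:=2s-1$, so that $s=(\alpha+1)/2$ as in Theorem~\ref{thm:PreDirectOscilation}, the assumption $\cal{L}\u\in\scr{B}^{2s-1}_{q,q;r-3}(\Omega)\cap L^2(\Omega)$ and the stated range on $2s$ give $f:=\cal{L}\u\in\cal{O}^s$. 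Applying Lemma~\ref{lem:EquivClasses} to the pair $(\u,f)$ then produces $\u\in\bb{A}^s$, which is the claim.

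I do not anticipate any substantive obstacle; the corollary simply packages the two preceding theorems. The point that deserves care is the exponent bookkeeping --- verifying that under the substitutions $\alpha=2s$ and $\alpha=2s-1$ the parameter windows asserted here match those required by Theorems~\ref{thm:PreDirect} and~\ref{thm:PreDirectOscilation}, and in particular that the underlying Besov embeddings $\scr{B}^{2s+2}_{p,p}(\Omega)\hookrightarrow H^2(\Omega)$ and $\scr{B}^{2s-1}_{q,q}(\Omega)\hookrightarrow L^2(\Omega)$ are in force; by \eqref{eq:Embeding} these amount to the conditions $1/p-1/2\le s$ and $1/q\le s$, which is the form in which the inequalities in the statement should be read. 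The only remaining ingredient, internal to Lemma~\ref{lem:EquivClasses}, is that the overlay $\mesh_1\oplus\mesh_2$ of the two near-optimal meshes is again admissible with $\#(\mesh_1\oplus\mesh_2)\lapprox N$, which is supplied by \eqref{eq:MeshOverelay}.
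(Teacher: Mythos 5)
Your proposal is correct and coincides with the paper's own (implicit) proof: the corollary is presented as an immediate consequence of Theorem~\ref{thm:PreDirect} (with $\alpha=2s$), Theorem~\ref{thm:PreDirectOscilation} (with $\alpha=2s-1$), and Lemma~\ref{lem:EquivClasses}, exactly the combination you describe. Your side remark that the embedding hypotheses are really $1/p-1/2\le s$ and $1/q\le s$ (rather than the literal ``$\le 2s$'' in the statement) is the correct reading of what the two theorems require.
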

\bibliography{publications}
\bibliographystyle{siam}

\end{document}